\newcommand{\mT}{\mathcal T}
\newcommand{\mF}{\mathcal F}
\newcommand{\mV}{\mathcal V}
\newcommand{\id}{{\rm id}}
\newcommand{\Vreg}{\mathcal{V}_{\text{reg},h}}
\newcommand{\rr}{\mathbb{R}}
\newcommand{\nn}{\mathbb{N}}
\newcommand{\jumpleft}{[\hspace*{-0.05cm}[}
\newcommand{\jumpright}{]\hspace*{-0.05cm}]}
\newcommand{\jump}[1]{\jumpleft #1 \jumpright}
\newcommand{\spacejump}[1]{\jump{#1}}
\newcommand{\hphi}{\hat \phi_h}
\newcommand{\lin}{{\text{\tiny lin}}}
\newcommand{\Omegalin}{\Omega^{ \lin}}
\newcommand{\Gammalin}{\Gamma^{ \lin}}
\newcommand{\thetah}{\Theta_h}
\begin{document}

\title*{A Higher Order Isoparametric Fictitious Domain Method for Level Set Domains}
\titlerunning{A Higher Order Isoparametric Fictitious Domain Method}
% Use \titlerunning{Short Title} for an abbreviated version of
% your contribution title if the original one is too long
\author{Christoph Lehrenfeld}
% Use \authorrunning{Short Title} for an abbreviated version of
% your contribution title if the original one is too long
\institute{Christoph Lehrenfeld \at Institut f\"ur Numerische und Angewandte Mathematik, Lotzestr. 16-18, D-37083 G\"ottingen, Germany, \email{lehrenfeld@math.uni-goettingen.de}}
%
% Use the package "url.sty" to avoid
% problems with special characters
% used in your e-mail or web address
%
\maketitle

\abstract{
We consider a new fictitious domain approach of higher order accuracy. To implement Dirichlet conditions we apply the classical Nitsche method combined with a facet-based stabilization (ghost penalty). Both techniques are combined with a higher order isoparametric finite element space which is based on a special mesh transformation. The mesh transformation is build upon a higher order accurate level set representation and allows to reduce the problem of numerical integration to problems on domains which are described by piecewise linear level set functions. The combination of this strategy for the numerical integration and the stabilized Nitsche formulation results in an accurate and robust method. We introduce and analyze it and give numerical examples.
}

\abstract*{
We consider a new fictitious domain approach of higher order accuracy. To implement Dirichlet conditions we apply the classical Nitsche method combined with a facet-based stabilization (ghost penalty). Both techniques are combined with a higher order isoparametric finite element space which is based on a special mesh transformation. The mesh transformation is build upon a higher order accurate level set representation and allows to reduce the problem of numerical integration to problems on domains which are described by piecewise linear level set functions. The combination of this strategy for the numerical integration and the stabilized Nitsche formulation results in an accurate and robust method. We introduce and analyze it and give numerical examples.
}

% \tableofcontents

\section{Introduction}
\label{sec:1}
\subsection{Motivation}
In physics, biology, chemistry and engineering many applications of simulation science involve complex geometrical shapes. In the past decade research on methods which separate the geometry description from the computational mesh and in turn provide a more flexible handling of the geometry compared to traditional conforming mesh descriptions have become very popular.
Significant progress has been made in the recent years concerning the construction, analysis and application of \emph{fictitious domain} finite element methods, often also called \emph{unfitted FEM}, see for instance the papers \cite{
burman2014cutfem,
burman2012fictitious,
fries2010extended,
grossreusken07,
hansbo2002unfitted,
olshanskii2009finite} 
and the references therein.
 In the literature also other names are used for this class of discretization methods, e.g.  extended FEM (XFEM) and CutFEM. 
 While most of the work on unfitted discretizations has been on piecewise linear (unfitted) finite elements, many unfitted discretizations have a natural extension to higher order finite element spaces, see for instance
 \cite{bastian2009unfitted,johanssonhigh2013,massjung12,parvizianduesterrank07}.
 However, new techniques are needed for higher order accuracy as
 challenges with respect to geometrical accuracy, stability (small cuts) and conditioning arise for higher order discretizations.
 
In this contribution, we consider a standard model problem which contains the essential numerical challenges (geometry handling, stability, conditioning) and propose a higher order discretization based on a ghost penalty stabilization which handles stability and conditioning issues in the situation of small cuts and an isoparametric mapping which facilitates the higher order accurate numerical treatment of the geometry. 

\subsection{The model problem}
As a model problem we consider the Poisson problem on an open and bounded domain $\Omega \subset \rr^d,~d=2,3$,
  \begin{equation} \label{eq:ellmodel}
    - \Delta u = \, f ~\text{ in }~ \Omega, \quad 
    u = \, u_D ~\text{ on }~ \Gamma := \partial \Omega, \quad u_D\in L^2(\partial \Omega).
  \end{equation}
We discuss the assumptions on the smoothness of the domain boundary in more detail below.
A well-posed weak formulation of \eqref{eq:ellmodel} is: \\
Find $u \in H^1_D(\Omega) := \{u \in H^1 | \operatorname{tr}_\Gamma(u) = u_D \}$ such that
\begin{equation}
  a(u,v) := \int_{\Omega} \nabla u \nabla v \, dx = \int_{\Omega} f \, v \, dx =: f(v), \quad \text{ for all } v \in H^1_0(\Omega).
\end{equation}
We want to solve the problem in a so-called \emph{unfitted} setting, i.e. in a setting where $\Omega$ is not meshed exactly but only implicitly represented by a scalar level set function $\phi$, $\Omega := \{ \phi \leq 0 \}$. This flexibility gives rise to several problems, the treatment of which this contribution is devoted to. 

\subsection{Literature}
In the original paper by Nitsche \cite{Nitsche71} a variational formulation has been proposed to implement Dirichlet boundary conditions weakly without imposing boundary conditions as \emph{essential} conditions into the finite element space. Such an approach is also often used in \emph{unfitted} finite element methods where the imposition of boundary conditions as \emph{essential} conditions into the space is hardly possible. A variant of Nitsche's method has been applied to unfitted interface problems in the seminal paper \cite{hansbo2002unfitted}. In that paper an averaging operator of the flux at the interface is tailored to provide stability independent of the cut position at the interface. For unfitted boundary value problems the corresponding stabilising mechanism is missing and additional stabilizations had to be invented to obtain robust methods, cf. e.g. \cite{BeckerBurmanHansbo2010,wadbro2013uniformly}. Recently, a popular stabilization is the ghost penalty method \cite{Burman2010} which has been successfully applied to (among other problems) unfitted boundary value problems, see for instance \cite{burman2012fictitious,MassingLarsonLoggEtAl2013a}. 
With Nitsche's method and stabilizations as the ghost penalty method, stable discretizations have been derived for a variety of PDE problems \cite{burman2016cutbernoulli,burman2016shape,hansbo2014cut,MassingLarsonLoggEtAl2013a,massing2016stabilized}.
However, when aiming at higher order discretizations the proper treatment of geometries still represents a significant difficulty. This is due to the fact that higher order accurate and robust numerical integration on domains which are implicitly described by level set functions is very challenging.

Different ideas exist in the literature to approach this problem. A very basic approach is based on a piecewise linear approximation of the geometry through a piecewise linear approximation of the level set function, cf. section \ref{pcwlin}. As this approach is inherently limited to second order accuracy it is insufficient for higher order methods.

Constructing quadrature points and weights based on the idea of fitting certain integral moments has been considered in \cite{muller2013highly,sudhakar2013quadrature}, with the drawback that stability of the resulting quadrature rule can not be guaranteed in general.
A different approach has been considered in \cite{cheng2010higher,dreau2010studied,fries2015} where a subtriangulation of the mesh is combined with a parametric mapping.
An implementation of this approach is technical and ensuring robustness is difficult, especially in more than two dimensions. 

In \cite{burman2015cut} an improvement of the basic piecewise linear approximation has been suggested based on the idea of boundary value corrections where the imposition of the boundary values in Nitsche's method are adjusted to the distance of the discrete approximation of the boundary to the (implicitly known) exact boundary. The domain of integration that is required in the method only needs to be a second order approximation. In the very recent paper \cite{boiveau2016fictitious} a variant of this method has been investigated for the nonsymmetric penalty-free Nitsche method.

In the following we consider another method which is also based on a piecewise linear approximation of the geometry which is then combined with a parametric mapping of the underlying mesh. This method has been proposed in \cite{lehrenfeld15} and applied and analysed for scalar interface problems \cite{LR16a,LR16b}, Stokes interface problems \cite{LPWL_PAMM_2016} and PDEs on surfaces \cite{GLR16}. We complement this series of studies by an application of the method for an unfitted boundary value problem as for instance in \cite{boiveau2016fictitious,burman2015cut}.

The major contributions in this study are the discussion of the isoparametric concept for unfitted finite element methods, the introduction of an isoparametric fictitious domain method and its thorough a priori error analysis. The method and its analysis is new. While for the analysis of geometrical errors we make use of results recently obtained in \cite{lehrenfeld15,LR16a,LR16b,GLR16}, the analysis of the ghost penalty stabilization for isoparametric (unfitted) finite elements has not been addressed in the literature so far.

% We introduce and analyze an isoparametric fictitious domain method based on a ghost penalty stabilized Nitsche formulation and the mesh transformation introduced in \cite{lehrenfeld15}. 

\subsection{Structure of the paper}
In section \ref{sec:prelim} we start with preliminaries, introduce notation, assumptions and the basic structure of the geometry handling. The construction of the parametric mapping used in the isoparametric method is explained in more detail in section \ref{sec:mapping} where also the most important properties of the mapping are given. In section \ref{sec:isoparammethod} we then define the isoparametric finite element method for the discretization of \eqref{eq:ellmodel}. The main part of this contribution is the analysis  of the method in section 
\ref{sec:analysis} in which we make use of the results in \cite{LR16a,LR16b,GLR16} at many places. Nevertheless, we require new results for stability, (geometrical) consistency and approximation due to the ghost penalty terms that have not been used in the methods in \cite{LR16a,LR16b,GLR16}.
We validate the a priori bounds with numerical experiments in section \ref{sec:numex} before we conclude this contribution in section \ref{sec:outlook}.

\section{Preliminaries} \label{sec:prelim}
We introduce basic assumptions on the geometry representation and notation for domains and triangulations.
\subsection{Assumptions on the domain description}
We assume that the domain $\Omega$ is embedded into a larger polygonal domain $\widetilde{\Omega} \supset \Omega$. The boundary $\Gamma = \partial \Omega$ is described by a level set function $\phi: \widetilde{\Omega} \rightarrow \rr$ on $\widetilde{\Omega}$ so that $\Gamma = \{ x \in \tilde{\Omega}, \phi(x) = 0\}$ and $\phi$ is negative in $\Omega$. In a neighborhood $U_\Gamma$ of $\Gamma$ the level set function is assumed to be smooth, $\phi \in C^{m+1}(U_\Gamma)$ for a $m \in \nn,~ m > 1$.
The level set function is not necessarily a signed distance function but there exist constants $c,C>0$ (independent of $\Gamma$) such that there holds $ c |\phi(x)| \leq \operatorname{dist}(x,\Gamma) \leq C |\phi(x)|$, $x\in U_\Gamma$.

Here and in the following we will use the notation $\lesssim$ ($\gtrsim$) for inequalities with generic constants $c$ which are independent of the mesh size $h$ and independent of the position of the domain boundary $\Gamma$ relative to the mesh, $a \leq c b \Leftrightarrow a \lesssim b$. If there holds $a \lesssim b$ and $a \gtrsim b$, we also write $a \simeq b$.

We consider a simplicial shape regular triangulation $\widetilde{\mT}_h$ of the domain $\widetilde{\Omega}$ and assume that a good approximation $\phi_h \in V_h^k$ of $\phi$ is known,
where $V_h^k$ is the space of continuous elementwise polynomials of (at most) degree $k$.
This means that we assume 
\begin{equation*}
  \Vert \phi - \phi_h \Vert_{\infty,U_\Gamma} + h \Vert \nabla \phi - \nabla \phi_h \Vert_{\infty,U_\Gamma} \lesssim h^{k+1}
\end{equation*}
where $\Vert \cdot \Vert_{\infty,U_\Gamma}$ denotes the $L^{\infty}(U_\Gamma)$ norm.
We assume that the smoothness of $\phi$ in $U_\Gamma$ has $m \geq k$.

Finally, we make the assumption of the mesh $\mT_h$ being quasi-uniform, i.e. there exists $h$ such that $h \simeq h_T$ for all $T \in \mT_h$ with $h_T=\operatorname{diam}(T)$. While only the (local) shape regularity is crucial for the techniques applied in the construction of the method and its analysis, this assumption is chosen to simplify the presentation. In section \ref{sec:numex} we include a numerical test case with a mesh that is not quasi-uniform. 

\subsection{A piecewise linear approximation of the geometry} \label{pcwlin}
As a basis for the geometry approximation of the only implicitly defined boundary $\Gamma$, we use a piecewise linear approximation $\hphi \in V_h^1$ of $\phi_h$. $\hphi$ defines an approximation $\Gammalin := \{ x \in \tilde{\Omega}, \hphi(x) = 0\}$ of the domain boundary which is planar on each (simplicial) element and hence allows for an \emph{explicit} representation. 
This explicit representation facilitates the numerical integration that is required in the weak formulation of unfitted finite element discretizations.
Obtaining quadrature rules based on this geometry representation is discussed in (among others) \cite[Chapter 5]{naerland2014geometrychap5},\cite{mayer2009interface} for triangles and tetrahedra and in \cite{lehrenfeld2015nitsche,lehrenfeld2015diss} also for 4-prisms and pentatopes (4-simplices).
Many simulation codes which apply unfitted finite element discretizations, e.g. \cite{DROPS,engwer2012dune,burman2014cutfem,renard2014getfem++,carraro2015implementation} make use of this kind of strategy.
The obvious drawback of this approach is that it is (by construction) only second order accurate.

\subsection{Improved geometrical accuracy with a parametric mapping}
In order not to suffer from the low order approximation we apply a special parametric transformation $\Theta_h \in (V_h^k)^d$ on the mesh $\widetilde{\mathcal{T}}_h$.
The mapping has been introduced in \cite{lehrenfeld15} and is discussed in more detail below in section \ref{sec:mapping}. For now, we assume that the transformation is given and defines a bijection on $\widetilde\Omega$.
From the \emph{explicitly} known mesh transformation $\Theta_h \in (V_h^k)^d$ and 
the (low order) geometry approximation with an \emph{explicit} representation $\Gammalin$ and $\Omegalin := \{ x \in \tilde{\Omega}, \hphi(x) \leq 0 \}$ we obtain a new approximation of $\Gamma$ and $\Omega$ as
\begin{equation}
  \Gamma_h := \{ \Theta_h(x), x \in \Gammalin \}~~\text{and}~~\Omega_h:=\{\Theta_h(x), x \in \Omegalin\}.
\end{equation}
  \begin{figure}
    \begin{center}
      \includegraphics[width=0.7\textwidth]{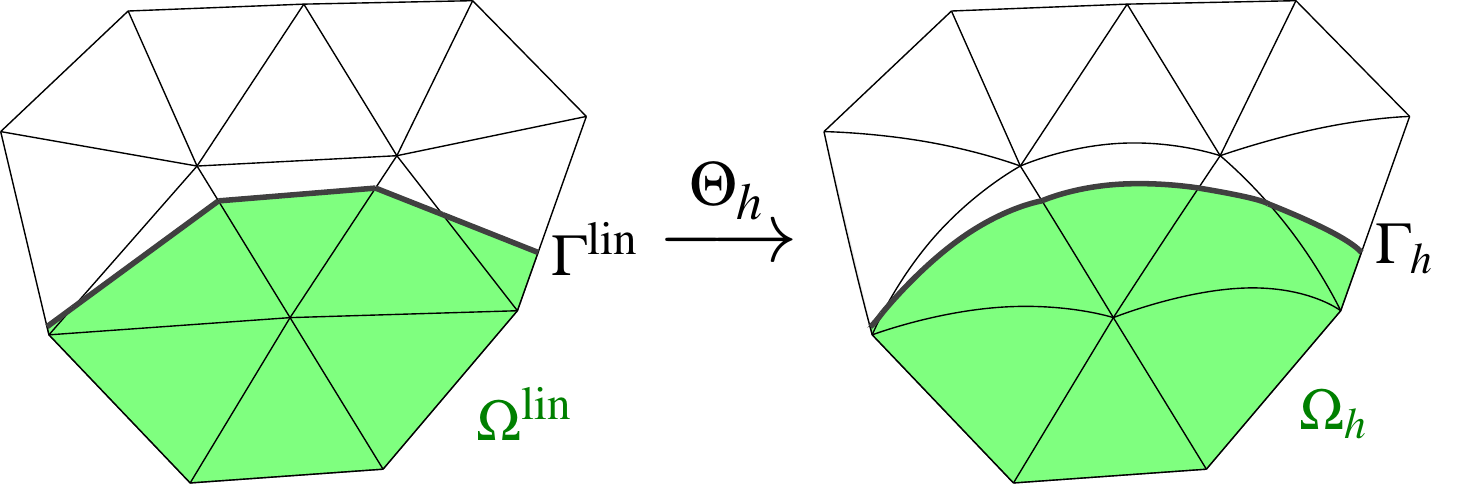} 
    \end{center}
    \caption{Application of the mesh transformation to improve the geometry approximation.}
    \label{fig:trafo}
  \end{figure}
A sketch of the application of the mesh transformation is given in Fig. \ref{fig:trafo}.
  
We note that $\Omega_h$ and $\Gamma_h$ have \emph{explicit parametrizations} which is crucial to obtain robust numerical integration strategies.

\subsection{Notation for cut elements}
We introduce notation corresponding to the cut configuration in the mesh. We note that the cut topology does not change under transformation with $\Theta_h$ so that it depends only on the piecewise linear approximation $\Omegalin$ ($\Gammalin$).

We define the ``active'' part of the background mesh $\widetilde{\mT}_h$ as \vspace*{-0.1cm}
\begin{subequations}
\begin{align}
  \mT_h & := \{ T \in \widetilde{\mT}_{h} : T \cap \Omegalin \neq \emptyset \}.  \\
\intertext{Cut elements are gathered in the subset} 
  \mT_h^\Gamma & := \{T \in \mT_h, T \cap \Gammalin \neq \emptyset \}, \\
\intertext{and the extension by direct (through edges) neighbors in}
  \mT_h^{\Gamma,+} & := \{T \in \mT_h, \operatorname{meas}_{1} (\overline{T}\cap\overline{T'}) > 0, T' \in \mT_h^{\Gamma} \}, \\
\intertext{ where $\operatorname{meas}_m$ denotes the $m$-dimensional Hausdorff-measure. 
Element interfaces between two elements in $\mT_h^{\Gamma,+}$ are collected in }
\mF_h & := \{ F = \overline{T^+} \cap \overline{T^-}: T^+, T^- \in \mT_h^{\Gamma,+}, \operatorname{meas}_{d-1}(F) > 0 \}.
\end{align}
\end{subequations}
A sketch of the domains and their (sub-)triangulations is given in Fig. \ref{fig:mesh}.
\begin{figure}
  \vspace*{-0.25cm}
  \begin{center}
    \includegraphics[width=0.975\textwidth]{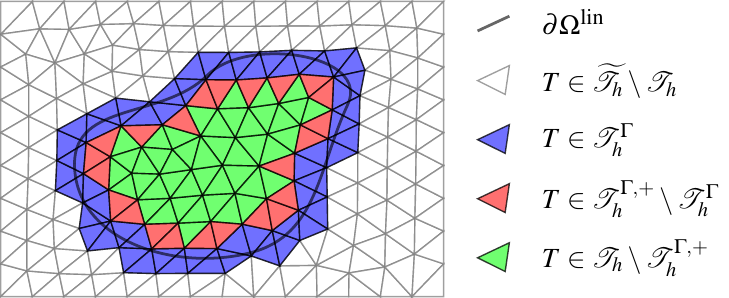} 
  \end{center}
  \vspace*{-0.175cm}
  \caption{Simplex triangulation of a domain $\widetilde{\Omega}$ and the different set of elements corresponding to the cut configuration.}
  \label{fig:mesh}
  \vspace*{-0.25cm}
\end{figure}
Finally, $\Omega^\Gamma := \{ x \in T, T \in \mT_h^\Gamma \}$ denotes the domain of cut elements and we define $\Omega^\mT = \{ x \in T, T \in \mT_h \}$ the domain of elements which have some part in $\Omegalin$. We use a corresponding definition for $\Omega^{\Gamma,+}$. In the following we assume that $h$ is sufficiently small so that $\Omega^{\Gamma} \subset U_\Gamma$, i.e. $\phi$ is smooth in $\Omega^{\Gamma}$.
\section{A Parametric mesh transformation for higher order accurate geometry approximation} \label{sec:mapping}
We introduce the mesh transformation $\Theta_h$ that is used later on for the higher order fictitious domain finite element method. 
\subsection{Construction of the mapping} \label{sec:mapping:construction}
The goal of the mesh transformation is to achieve a mapping which has $\Theta_h(\Gammalin) \approx \Gamma$ and is a homeomorphic finite element (vector) function in $(V_h^k)^d$. The mesh transformation should further be the identity in the larger part of the domain.
In \cite{lehrenfeld15} we developed such a transformation.
The basic idea is to characterize a locally ideal transformation $\Psi$ with a one-to-one mapping.
To a point $x \in T \in \mT_h^\Gamma$ we find a suitable point $y \in \widetilde \Omega$ such that 
\begin{equation}\label{eq:psi1}
  \hphi(x) = \phi (y)
\end{equation}
and define $\Psi(x) := y$. 
For a point $x$ and a corresponding approximated level set value $c = \hphi(x)$ there may be infinitely many points $y$ with $\phi(y) = c$. Hence, we specifiy the search direction $G(x) = \nabla \phi / \Vert \nabla \phi \Vert$ and 
ask for
\begin{equation} \label{eq:psi}
\Psi(x) := y = x + d(x) \cdot G(x)
\end{equation}
where $d(x)$ is the smallest (in absolute value) number such that \eqref{eq:psi1} is true.
As $\phi$ is typically not known, we make a first approximation by replacing $\phi$ with $\phi_h$ in \eqref{eq:psi1} and $G$ with $G_h := \nabla \phi_h / \Vert \nabla \phi_h \Vert$. The thusly defined mapping still gives rise to problems.
For a point $x \in T$ a corresponding mapped point $y$ could be positioned in a different element $T'$. In view of computational complexity this is undesired as it requires non-local operations (evaluation of $\phi_h|_{T'}$) which can be costly especially in parallel environments. We circumvent this by replacing $\phi_h$ with $\mathcal{E}_T \phi_h$, the polynomial extension of $\phi_h|_T$ to $\rr^d$. For a correspondingly adapted transformation $\Psi_h$ there holds for $x \in T \in \mT_h^\Gamma$
\begin{subequations}
\begin{equation} \label{psih}
  \Psi_h(x) = x + d_h(x) \cdot G_h(x)
\end{equation}
where $d_h(x)$ is the smallest (in absolute value) number so that 
\begin{equation} \label{trafo}
\hphi(x) = \mathcal{E}_T \phi_h (\Psi_h(x))
\end{equation}
\end{subequations}
holds. The pointwise evaluation of $\Psi_h$ for $x \in T \in \mT_h^\Gamma$ can be realized efficiently, cf. \cite{lehrenfeld15}.
The transformation $\Psi_h$ is only elementwise smooth, $\Psi_h|_T \in C^\infty(T), ~ T \in \mT_h^\Gamma$ but can be discontinuous across element interfaces. However, the jumps across element interfaces are of higher order so that a suitable projection into the finite element space $(V_h^k)^d$ of continuous vector-valued functions allows to remove the discontinuities with introducing only a higher order error.
To achieve this we apply a projection $P_h^1: C(\mT_h^\Gamma) \rightarrow V_h^k|_{\Omega^\Gamma}$ which maps (component-wise) an only piecewise continuous function onto $V_h^k|_{\Omega^\Gamma}$, the space of continous functions on $\Omega^\Gamma$ which are elementwise polynomials of degree (at most) $k$ on $\mT_h^\Gamma$.
A second projection $P_h^2$ realizes (component-wise) the finite element extension from $V_h^k|_{\Omega^\Gamma}$ to $V_h^k$.
% This is done with element-by-element blending techniques that are known from isoparametric finite elements \cite{bernardi1989optimal,lenoir1986optimal}. This blending to the exterior guarantees that higher order derivatives of the mesh deformation in $\Omega \setminus \Omega^\Gamma$ are bounded (pointwise) by derivates on $\partial \Omega^\Gamma$. This is crucial to obtain optimal approximation properties with a isoparamteric finite element space.
The composition of both projections $P_h = P_h^2 P_h^1$ allows for the definition of the finite element mesh transformation:
\begin{equation} \label{eq:thetah}
  \Theta_h := P_h \Psi_h.
\end{equation}

\begin{figure}[h!]
  \begin{center}
    \includegraphics[width=0.975\textwidth]{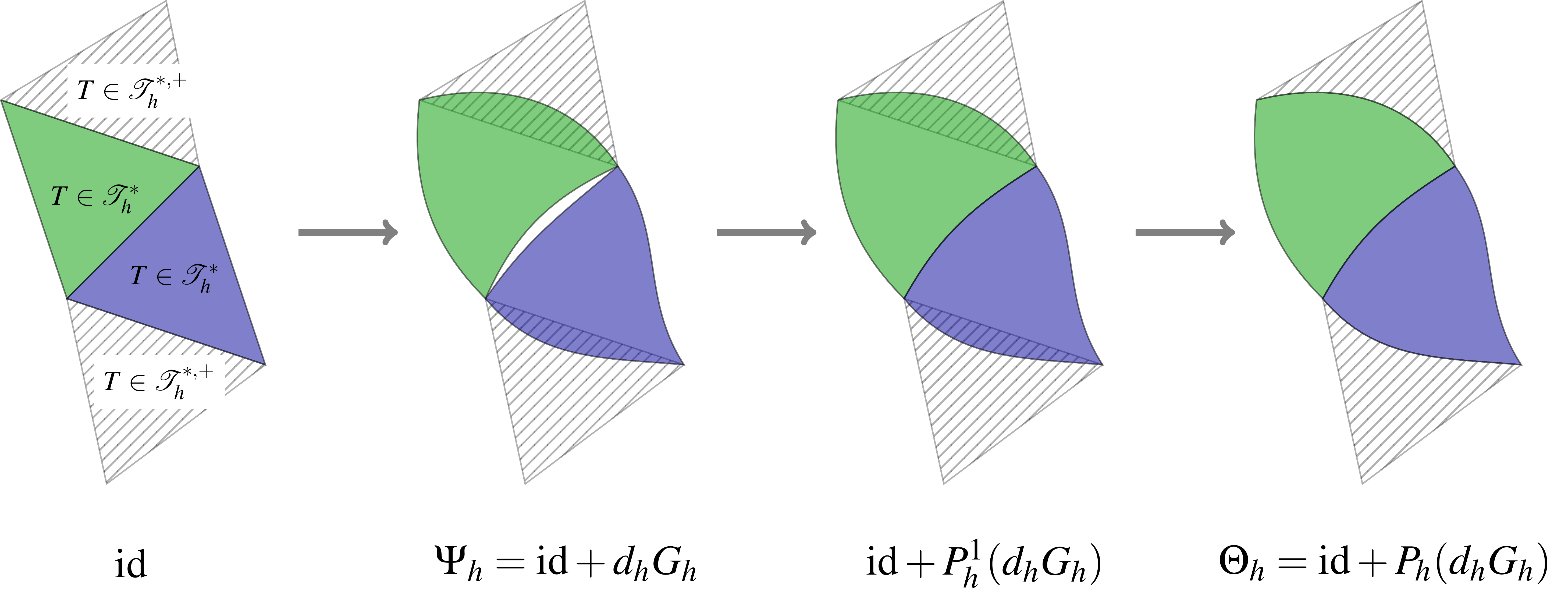} 
  \end{center}
  \vspace*{-0.4cm}
  \caption{Construction steps of the transformation $\Theta_h$. In the first step, $\Psi_h$ (only in $\mT_h^\Gamma$, pointwise, discontinuous across element interfaces) is constructed. In a second step, the discontinuities are removed through averaging (only in $\mT_h^\Gamma$). Finally, a continuous extension to the exterior is realized.}
  \vspace*{-0.4cm}
  \label{fig:proj} 
\end{figure}
A sketch of the steps in the construction of $\Theta_h$ is given in Fig. \ref{fig:proj}.
In the next two paragraphs we define the projections $P_h^1$ and $P_h^2$. For further details we refer to \cite{LR16a}.

\subsubsection*{$P_h^1$: An Oswald-type projection}
Consider $ v \in  C(\mT_h^\Gamma)$. We explain how we determine $P_h^1 v$.
Let $\{\varphi_i\}_{i=1,..,N}$ be the basis of the finite element space $V_h^k|_{\Omega^\Gamma}$,
$$
S_i := \{ T \in \mT_h^\Gamma \mid \operatorname{supp}(\varphi_i)\cap T \neq \emptyset\}
$$
be the set of elements where $\varphi_i$ is supported with $\# S_i$ the cardinality of $S_i$ and
$$
S_T := \{ i \in \{1,..,N\} \mid \operatorname{supp}(\varphi_i)\cap T \neq \emptyset\}
$$
the set of unknowns which are supported on $T$.
With a local interpolation we determine a polynomial approximation of $v$ on $T \in \mT_h^\Gamma$ which we can write as
$$
\sum_{i \in S_T} c_{i,T}(v) \varphi_i|_T
$$
with unique coefficients $c_{i,T}(v)$.
The obtained piecewise polynomial approximation which can be discontinuous across different elements in $\mT_h^\Gamma$ can be reformulated as
$$
\sum_{T\in \mT_h^\Gamma} \sum_{i \in S_T} c_{i,T}(v) \varphi_i|_T = \sum_{i = 1}^N \sum_{T\in S_i}  c_{i,T}(v) \varphi_i|_T.
$$
To obtain a continuous function on $\Omega^\Gamma$, we apply a simple averaging to define $P_h^1 v$:
$$
P_h^1 v :=  \sum_{i = 1}^N \frac{\sum_{T\in S_i}  c_{i,T}(v)}{\# S_i} \varphi_i|_{\Omega^\Gamma}.
$$
This type of projection is often called \emph{Oswald} interpolation, see also \cite{ernguermond15,oswald}.

\subsubsection*{$P_h^2$: A finite element extension procedure based on hierarchical basis functions}
We aim for a projection operation so that $P_h^2 (P_h^1\Psi_h)=\operatorname{id}$ on every element $T \in \mT_h \setminus \mT_h^{\Gamma,+}$, i.e. that elements away from the interface stay uncurved. Additionally we need the extension to be smooth so that also higher order derivates stay (uniformly) bounded.
Let $D_h := P_h^1\Psi_h - \operatorname{id} = P_h^1 (d_h G_h)$ be the mesh \emph{deformation} in $(V_h^k|_{\Omega^\Gamma})^d$.
We seek for the extension $\mathcal{E}_h: V_h^k|_{\Omega^\Gamma} \to V_h^k$ which is zero in $\mT_h \setminus \mT_h^{\Gamma,+}$ and leaves the values in $\mT_h^\Gamma$ unchanged. We then apply the extension (component-wise) to define $P_h^2(P_h^1\Psi_h) := \operatorname{id} + \mathcal{E}_h D_h$.

Next, we discuss the construction of this extension procedure which is based on a hierarchical decomposition of the basis functions of the finite element space $V_h^k$. If a more general basis for $V_h^k$ is considered, for instance a Lagrange basis, an equivalent definition of the extension can be given using blending techniques introduced in \cite{bernardi1989optimal,lenoir1986optimal}. We refer the interested reader to \cite[Section 3.3]{LR16a}.

Let $V_h^k = \bigoplus_{l=1}^k \bar{V}_h^l$ be the decomposition into the subspaces $\bar{V}_h^l$ so that $V_h^n = \bigoplus_{l=1}^n \bar{V}_h^l$ for all $n=1,..,k$.
We assume hierarchical basis functions, i.e. the basis of $V_h^k$ is obtain by adding the bases of $\bar{V}_h^l$ for $l=1,..,k$. This decomposition implies in particular that the usual piecewise linear hat functions of $\bar{V}_h^1 = V_h^1$ are also basis functions in $V_h^k$, $k>1$. Note that this is not the case for a Lagrange basis.

We are given a function $d_h \in V_h|_{\Omega^\Gamma}$ (e.g. one component of $D_h$) and can express $d_h$ in the (hierarchical) basis of $V_h^k$ as
$$
d_h = \sum_{i \in S^\Gamma} d_i \varphi_i|_{\Omega^\Gamma},~d_i\in\rr,\text{ where } S^\Gamma \! := \{ i \in \{1,..,N\} | \operatorname{supp}(\varphi_i) \cap \Omega^\Gamma \neq \emptyset \}
$$
is the set of unknowns whose basis functions are supported in $\Omega^\Gamma$.
We define the extension
$$
\mathcal{E}_h d_h := \sum_{i \in S^\Gamma} d_i \varphi_i 
$$
which coincides with setting $d_j = 0$ for $j \not\in S^\Gamma$. Obviously, the implementation of this extension is trivial, once $d_h$ is given in terms of the hierarchical basis functions.

We note that the hierarchical structure of the basis functions are crucial for this extension to provide the necessary (uniform) bound on the derivatives, see \cite[Section 3.3]{LR16a} for details in the analysis. 
To illustrate this, consider the example of a triangle $T$ in $\mT_h^{\Gamma,+} \setminus \mT_h^{\Gamma}$ with one edge $F$ adjacent to $\Omega^\Gamma$. If $d_h$ is a polynomial of degree $l$ on $F$, $d_h \in \mathcal{P}^l(F)$ for $l<k$, we will have that $\mathcal{E}_h d_h \in \mathcal{P}^l(T)$ (as coefficients to higher order basis functions will be zero for $d_h$ and thus also for $\mathcal{E}_h d_h$), i.e. the extension preserves the polynomial degree. This is not true if a Lagrange basis is used.
In the case of a Lagrange basis we will typically have $\Vert D^{l+1} \mathcal{E}_h d_h \Vert_{L^{\infty}(T)} > 0$ even if $\Vert D^{l+1} d_h \Vert_{L^{\infty}(F)} = 0$ so that we can not control the higher derivatives of $\mathcal{E}_h d_h$ by corresponding derivatives in $d_h|_{\Omega^\Gamma}$.

\subsection{Properties of the mapping}
In \cite{lehrenfeld15,LR16a,LR16b,GLR16} the mapping $\Theta_h$ and the resulting geometry approximation $\Omega_h$ has been analyzed. In this section we summarize the most important properties.

\begin{lemma} \label{propertiesdh}
  For $h$ sufficiently small, with $\Theta_h$ as in \eqref{eq:thetah} and $\Psi$ as in \eqref{eq:psi}, there holds
  \begin{subequations}
    \begin{align}
      \Theta_h(x)  =x \quad \text{for $x=x_V$ {\rm vertex} in $\mT_h$ or $~x \in \Omega^\mT \setminus \Omega^{\Gamma,+}$}, \label{resd3} \\
      \| \Theta_h - \operatorname{id} \|_{\infty}  \lesssim h^2, \qquad \qquad
      \| D \Theta_h - \operatorname{I}\|_{\infty} \lesssim h,  \qquad\qquad \label{resd4} \\
  \Vert \thetah - \Psi \Vert_{\infty,\Omega^{\Gamma}} + h \Vert D (\thetah - \Psi) \Vert_{\infty,\Omega^{\Gamma}} 
      \lesssim h^{k+1}
      \Rightarrow
\operatorname{dist}(\Gamma,\Gamma_h) \lesssim h^{k+1}.
    \end{align}
  \end{subequations}
\end{lemma}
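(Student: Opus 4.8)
The plan is to establish the three groups of assertions in the order stated, using the construction of $\Theta_h = P_h^2 P_h^1 \Psi_h$ and the defining properties of $\Psi$ and $\Psi_h$ recalled in Section~\ref{sec:mapping:construction}.

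First, for the identity property \eqref{resd3}: on any element $T \in \mT_h \setminus \mT_h^{\Gamma,+}$ the second projection $P_h^2$ is designed to yield $\operatorname{id}$, so $\Theta_h = \operatorname{id}$ there by construction. For a vertex $x_V$ of $\mT_h$, I would argue separately. If $x_V \notin \overline{\Omega^\Gamma}$ then $x_V$ lies in an uncurved element and the previous case applies. If $x_V$ is a vertex of a cut element, then one uses that $\hphi \in V_h^1$ is piecewise linear and that the construction of $\Psi_h$ (via \eqref{trafo}) is such that at interpolation nodes which are mesh vertices the deformation vanishes; more precisely $d_h(x_V) = 0$ because $\hphi$ and $\mathcal{E}_T\phi_h$ agree (to the required order) at $x_V$, and the Oswald averaging $P_h^1$ preserves nodal values at vertices since all contributing local interpolants already vanish there. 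Hence $D_h(x_V) = 0$ and $\Theta_h(x_V) = x_V$.

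Second, for the global bounds \eqref{resd4}: the key input is that the deformation $d_h G_h$ driving $\Psi_h$ is pointwise $O(h^2)$ in magnitude with $O(h)$ first derivatives on $\mT_h^\Gamma$. This follows from \eqref{trafo}: since $\hphi$ is the piecewise linear interpolant of $\phi_h$, one has $\|\hphi - \phi_h\|_{\infty,\Omega^\Gamma} \lesssim h^2$ (standard interpolation on a shape-regular mesh, using $|D^2\phi_h| \lesssim 1$ which in turn follows from $\|\phi - \phi_h\|$ bounds and smoothness of $\phi$), and since $\mathcal{E}_T\phi_h$ has gradient bounded away from zero for $h$ small (as $\|\nabla\phi\| \gtrsim 1$ near $\Gamma$), solving $\hphi(x) = \mathcal{E}_T\phi_h(x + d_h(x) G_h(x))$ for the minimal $d_h$ gives $|d_h| \lesssim h^2$; differentiating the defining relation and using the implicit function theorem yields $|D d_h| \lesssim h$. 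The projections $P_h^1$ and $P_h^2$ are $L^\infty$-stable up to an $h$-loss in each derivative order on shape-regular meshes, so these bounds transfer to $D_h = P_h^1(d_h G_h)$ and then to $\mathcal{E}_h D_h$, giving $\|\Theta_h - \operatorname{id}\|_\infty \lesssim h^2$ and $\|D\Theta_h - \operatorname{I}\|_\infty \lesssim h$.

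Third, for the comparison with the ideal map $\Psi$ and the resulting distance bound: I would write $\thetah - \Psi = (\thetah - \Psi_h) + (\Psi_h - \Psi)$. For the second term, both maps move $x$ along a search direction (approximately $\nabla\phi/\|\nabla\phi\|$) to hit a level set value; comparing \eqref{eq:psi1} with \eqref{trafo} and using $\|\phi - \phi_h\|_{\infty,U_\Gamma} + h\|\nabla\phi - \nabla\phi_h\|_{\infty,U_\Gamma} \lesssim h^{k+1}$ together with $\|G - G_h\| \lesssim h^k$ and the polynomial extension error $\|\phi_h - \mathcal{E}_T\phi_h\|$ on the relevant neighborhood (which is $O(h^{k+1})$ by a Taylor argument since $\phi_h|_T$ is a degree-$k$ polynomial and $\mathcal{E}_T$ is its own polynomial extension, so the only error is the gap to $\phi$) yields $\|\Psi_h - \Psi\|_{\infty,\Omega^\Gamma} + h\|D(\Psi_h - \Psi)\|_{\infty,\Omega^\Gamma} \lesssim h^{k+1}$. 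For the first term, $\thetah - \Psi_h = (P_h - \operatorname{id})\Psi_h$ restricted to $\Omega^\Gamma$; since $\Psi_h$ is elementwise smooth with interelement jumps of order $h^{k+1}$ (the jumps are controlled by the interpolation error, as the pieces approximate the same continuous ideal map $\Psi$), the Oswald projection $P_h^1$ changes $\Psi_h$ by at most the size of these jumps, i.e. $O(h^{k+1})$ in $L^\infty$ and $O(h^k)$ in the gradient, and $P_h^2$ adds nothing on $\Omega^\Gamma$. Combining gives the premise of the implication. Finally, since by definition $\Psi$ maps $\Gammalin$ exactly onto $\Gamma$ (because $\hphi(x) = \phi(\Psi(x)) = 0$ for $x \in \Gammalin$), we have $\Gamma_h = \thetah(\Gammalin)$ with $\|\thetah - \Psi\|_{\infty,\Gammalin} \lesssim h^{k+1}$, whence every point of $\Gamma_h$ is within $O(h^{k+1})$ of $\Gamma$ and, by a symmetric argument using that $\Psi$ is a homeomorphism with $O(h^2)$-close-to-identity inverse, every point of $\Gamma$ is within $O(h^{k+1})$ of $\Gamma_h$; thus $\operatorname{dist}(\Gamma,\Gamma_h) \lesssim h^{k+1}$.

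\textbf{Main obstacle.} The delicate point is the quantitative control of the interelement jumps of the pointwise map $\Psi_h$ and the proof that the projections $P_h^1, P_h^2$ are stable in $L^\infty$ with only the expected one-power-of-$h$ loss per derivative on shape-regular (not merely quasi-uniform) meshes; equivalently, showing that $P_h$ perturbs $\Psi_h$ by no more than the jump size. I expect to lean on the detailed analysis in \cite{lehrenfeld15,LR16a} for these estimates rather than reproving them, and the self-contained part of the argument is mostly the bookkeeping of which error source contributes which power of $h$.
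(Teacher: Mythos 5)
Your outline is correct and follows essentially the same route as the paper, which proves this lemma simply by citing Lemmas 3.4, 3.6 and 3.7 of \cite{LR16a}: those lemmas argue exactly as you do (vanishing of the deformation at vertices and outside $\mT_h^{\Gamma,+}$, $|d_h|\lesssim h^2$ from $\Vert \hphi-\phi_h\Vert_\infty\lesssim h^2$, and the splitting $\thetah-\Psi=(\thetah-\Psi_h)+(\Psi_h-\Psi)$ with jump and projection-stability estimates). Since you, like the paper, defer the technical $L^\infty$-stability and jump bounds for $P_h^1$, $P_h^2$ to \cite{lehrenfeld15,LR16a}, there is no substantive difference in approach.
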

\begin{proof} See Lemmas 3.4, 3.6 and 3.7 in \cite{LR16a}. \end{proof}
Note that $\Theta_h$ can be seen as a small perturbation to the identity. In the most part of the domain the deviation is zero and in the vicinity of the domain boundary it is small. As the transformation only ``repairs'' approximation errors of $\Gammalin$, the deviation from the identity decreases for $h \rightarrow 0$ as the approximation quality of $\Gammalin$ increases. Further, the geometry approximation after mesh transformation is of higher order accuracy, cf. the sketch in Fig. \ref{fig:trafos}.

 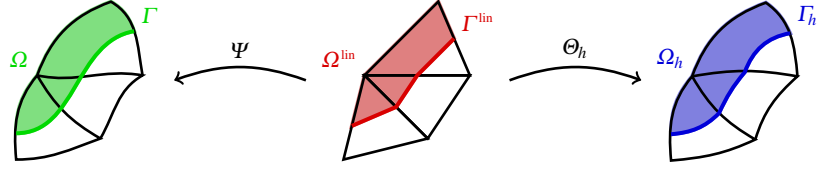
\begin{figure}[h!]
   \vspace*{-0.4cm}
   \begin{center}
     \begin{tikzpicture}[scale=1.4]
       \node(Olin) {
     \begin{tikzpicture}[scale=1.4]
       \coordinate (A) at (0.0,0.0);
       \coordinate (B) at (0.7,0.7);
       \coordinate (C) at (1.0,0.0);
       \coordinate (D) at (0.6,-0.6);
       \coordinate (E) at (-0.2,-0.8);
       \coordinate (F) at ($(A)!0.6!(E)$);
       \coordinate (G) at ($(A)!0.5!(D)$);
       \coordinate (H) at ($(A)!0.5!(C)$);
       \coordinate (I) at ($(B)!0.5!(C)$);
       \filldraw[red!75!black, opacity=0.5,line width=1.5pt] (F) -- (G) -- (H) -- (I) -- (B) -- (A) -- cycle;
       \draw[line width=1pt] (A) -- (B) -- (C) --cycle;
       \draw[line width=1pt] (A) -- (C) -- (D) --cycle;
       \draw[line width=1pt] (A) -- (D) -- (E) --cycle;
       \draw[red!85!black,line width=1.5pt] (F) -- (G) -- (H) -- (I);
       \node[above right] at (I) {\color{red!85!black}$\Gammalin$};
       \node[above left] at (A) {\color{red!85!black}$\Omegalin$};
     \end{tikzpicture}
   };
   \node[left= 1.75cm of Olin.west](Oexa) {
     \begin{tikzpicture}[scale=1.4]
       \coordinate (A) at (0.0,0.0);
       \coordinate (B) at (0.7,0.7);
       \coordinate (C) at (1.0,0.0);
       \coordinate (D) at (0.6,-0.6);
       \coordinate (E) at (-0.2,-0.8);
       \coordinate (F) at (-0.2,-0.55);
       \coordinate (I) at (0.92,0.42);
       \filldraw[green!75!black, opacity=0.5,line width=1.5pt] (F) to[in=-170,out=0] (I) to[in=-30,out=110] (B) to[out=-160,in=70] (A) to[in=90,out=-130] (F);
       \draw[line width=1pt] (A) to[in=-160,out=70] (B) to[in=100,out=-30] (C);
       \draw[line width=1pt] (A) to[in=170,out=-12] (C) to[in=60,out=-150] (D);
       \draw[line width=1pt] (A) to[in=150,out=-60] (D) to[in=0,out=-160] (E) to[in=-130,out=95] (A);
       \draw[green!85!black,line width=1.5pt] (F) to[in=-170,out=0] (I);
       \node[above right] at (I) {\color{green!85!black}$\Gamma$};
       \node[above left] at (A) {\color{green!85!black}$\Omega$};
     \end{tikzpicture}
   };
   \node[right= 1.75cm of Olin.east](Oh) {
   % \node[below = of Oexa.east, anchor=north west](Oh) {
     \begin{tikzpicture}[scale=1.4]
       \coordinate (A) at (0.0,0.0);
       \coordinate (B) at (0.7,0.7);
       \coordinate (C) at (1.0,0.0);
       \coordinate (D) at (0.6,-0.6);
       \coordinate (E) at (-0.2,-0.8);
       \coordinate (F) at (-0.2,-0.55);
       \coordinate (G) at (0.255,-0.35);
       \coordinate (H) at (0.5,0.04);
       \coordinate (I) at (0.93,0.41);
       \filldraw[blue!75!black, opacity=0.5,line width=1.5pt] (F) to[in=-135,out=5] (G) to[in=-130,out=65] (H) to[out=70,in=-170] (I) to[in=-35,out=115] (B) to[out=-160,in=70] (A) to[in=90,out=-130] (F);
       \draw[line width=1pt] (A) to[in=-160,out=70] (B) to[in=95,out=-35] (C);
       \draw[line width=1pt] (A) to[in=170,out=10] (C) to[in=73,out=-138] (D);
       \draw[line width=1pt] (A) to[in=150,out=-60] (D) to[in=5,out=-155] (E) to[in=-130,out=95] (A);
       \draw[blue!85!black,line width=1.5pt] (F) to[in=-135,out=5] (G) to[in=-130,out=65] (H) to[out=70,in=-170] (I);
       \node[above right] at (I) {\color{blue!85!black}$\Gamma_h$};
       \node[above left] at (A) {\color{blue!85!black}$\Omega_h$};
     \end{tikzpicture}
   };
   \draw[line width=1pt,->] (Olin.east) to[in=160,out=20] node[above] {$\Theta_h$}  (Oh.west);
   \draw[line width=1pt,->] (Olin.west) to[in=20,out=160] node[above] {$\Psi$} (Oexa.east);
   % \draw[line width=1pt,<-] (Oexa.east) to[in=90,out=0] node[above right] {$\Phi_h$} (Oh.north);

   \end{tikzpicture}
   \end{center}
   \vspace*{-0.4cm}
   \caption{Sketch of the transformations $\Psi$ and $\Theta_h$ on $\mT_h^{\Gamma}$. $\Psi$ maps the piecewise linear domains on the exact geometries, $\thetah$ is the discrete transformation approximating $\Psi$.
   }
   \vspace*{-0.5cm}
   \label{fig:trafos}
 \end{figure}

\begin{remark}[Shape regularity]
Due to the properties of $\Theta_h$ in Lemma \ref{propertiesdh}, it is easy to deduce shape regularity of the deformed mesh based on shape regularity of the original background mesh for $h$ sufficiently small. It can however happen that shape regularity is not obtained on very coarse grids where $\Gammalin$ is not able to resolve the geometry sufficiently well. In this case a modification of the mesh transformation has to be applied, cf. \cite{lehrenfeld15} for a possible remedy. In the remainder we assume that the shape regularity of the deformed mesh is not an issue and that $h$ is sufficiently small.
\end{remark}

We also have bounds for the higher derivatives of the mapping (and its inverse).
\begin{lemma} \label{lem:invtrafo}
  For $T \in \mathcal{T}_h$ we have 
  $$
\Vert \Theta_h \Vert_{m,\infty,T} \lesssim 1 \quad \text{ and } \quad
\Vert \Theta_h^{-1} \Vert_{m,\infty,\Theta(T)} \lesssim 1, \quad m \in \{1,..,k+1\},
$$
with $\Vert  \cdot \Vert_{m,\infty,\Theta(T)} := \max_{l\leq m} \Vert D^l \cdot \Vert_{L^{\infty}(T)}$.
\end{lemma}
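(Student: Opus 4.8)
The plan is to exploit the decomposition $\Theta_h = \operatorname{id} + \mathcal{E}_h D_h$ with $D_h = P_h^1(d_h G_h) = P_h^1(\Psi_h - \operatorname{id})$, together with the fact that $\Theta_h|_T$ is a polynomial of degree at most $k$, so that $D^l\Theta_h \equiv 0$ for $l \geq k+1$ and it suffices to control derivatives of order $\leq k$. For $T \in \mT_h \setminus \mT_h^{\Gamma,+}$ one has $\Theta_h|_T = \operatorname{id}$ by \eqref{resd3}, hence $\|\Theta_h\|_{m,\infty,T} \lesssim 1$ since $\widetilde\Omega$ is bounded. It remains to treat $T \in \mT_h^{\Gamma,+}$.

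For such $T$ I would invoke the componentwise stability of the hierarchical extension $\mathcal{E}_h$ and of the Oswald-type projection $P_h^1$ established in \cite[Section~3.3]{LR16a}, which reduce the claim to the bound $\|\Psi_h - \operatorname{id}\|_{m,\infty,T'} \lesssim 1$ on the cut elements $T' \in \mT_h^\Gamma$ that feed into $T$ through the extension. Writing $\Psi_h - \operatorname{id} = d_h\, G_h$, I would treat the two factors separately. For $G_h = \nabla\phi_h/\|\nabla\phi_h\|$: from $|\nabla\phi| \simeq 1$ in $U_\Gamma$ and $h\|\nabla(\phi-\phi_h)\|_{\infty,U_\Gamma} \lesssim h^{k+1}$ one obtains $\|\nabla\phi_h\| \gtrsim 1$ on $\Omega^\Gamma$; and since $\phi \in C^{m+1}(U_\Gamma)$ with $m \geq k$, comparing the polynomial $\phi_h$ with a standard degree-$k$ interpolant $I_h\phi$ and applying an inverse inequality to $\phi_h - I_h\phi$ yields $\|\phi_h\|_{k+1,\infty,T'} \lesssim 1$. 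As $y \mapsto y/|y|$ is smooth with bounded derivatives on $\{\,|y| \gtrsim 1\,\}$, this gives $\|G_h\|_{k,\infty,T'} \lesssim 1$.

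The main step is the bound $\|d_h\|_{m,\infty,T'} \lesssim 1$, where $d_h$ is defined implicitly by $\hphi(x) = \mathcal{E}_{T'}\phi_h(x + d_h(x)G_h(x))$. From this relation, the identity $\mathcal{E}_{T'}\phi_h = \phi_h$ on $T'$, the stability of the piecewise linear interpolant $\hphi$ of $\phi_h$, and $\|\nabla\phi_h\| \gtrsim 1$, one first reads off $|d_h| \lesssim \|\phi_h - \hphi\|_{\infty,T'} \lesssim h^2$; hence the argument $x + d_h(x)G_h(x)$ stays in a fixed neighbourhood of $T'$ on which $\partial_t\big(\mathcal{E}_{T'}\phi_h(x+tG_h(x))\big) = \nabla\phi_h(x+tG_h(x))\cdot G_h(x) \gtrsim 1$. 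The implicit function theorem then applies, and differentiating the defining relation and inducting on the derivative order (Fa\`a di Bruno), with the bounds $\|\mathcal{E}_{T'}\phi_h\|_{k+1,\infty} \lesssim 1$, $\|G_h\|_{k,\infty,T'} \lesssim 1$ and $\|\hphi\|_{k+1,\infty} \lesssim 1$, gives $\|d_h\|_{m,\infty,T'} \lesssim 1$ for $m \leq k+1$; a Leibniz estimate for the product $d_h G_h$ then closes the first bound. I expect this quantitative implicit-function-theorem step --- uniform control of \emph{all} higher derivatives of $d_h$ with constants independent of $h$ and of the cut position --- to be the technically delicate part, the extension/projection reductions being routine given the cited stability results.

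For the inverse, note first that $\|D\Theta_h - \operatorname{I}\|_{\infty} \lesssim h$ by Lemma~\ref{propertiesdh}, so for $h$ small a Neumann series gives $\|(D\Theta_h)^{-1}\|_{\infty,T} \lesssim 1$ on every $T$; since $\Theta_h$ is a bijection on $\widetilde\Omega$, $\Theta_h|_T$ is a diffeomorphism onto $\Theta_h(T)$ and $\Theta_h^{-1}|_{\Theta_h(T)}$ is smooth. The $L^\infty$-bound is immediate because $\Theta_h^{-1}(\Theta_h(T)) = T \subset \widetilde\Omega$ is bounded, and $D\Theta_h^{-1} = \big((D\Theta_h)\circ\Theta_h^{-1}\big)^{-1}$ is bounded by the above. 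Higher derivatives follow by induction on the order: differentiating this identity expresses $D^l\Theta_h^{-1}$ in terms of $(D\Theta_h)^{-1}\circ\Theta_h^{-1}$, the derivatives $D^j\Theta_h$ with $j \leq l$ evaluated at points of $T$ (bounded by the first part, with $D^{k+1}\Theta_h \equiv 0$), and lower-order derivatives $D^j\Theta_h^{-1}$ with $j < l$ (the induction hypothesis), yielding $\|\Theta_h^{-1}\|_{m,\infty,\Theta_h(T)} \lesssim 1$ for $m \leq k+1$.
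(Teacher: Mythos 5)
Your proposal is correct in outline, but it takes a partially different route from the paper. For the bound on $\Theta_h$ itself, the paper gives no argument at all: it cites \cite{GLR16} for the estimate on $\mathcal{T}_h^\Gamma$ and \cite{LR16a} for the carry-over to all of $\mathcal{T}_h$ via the extension step $P_h^2$. You instead re-derive this from the construction: the decomposition $\Theta_h=\operatorname{id}+\mathcal{E}_h D_h$, $W^{m,\infty}$-stability of $P_h^1$ and $\mathcal{E}_h$, boundedness of $\|\phi_h\|_{k+1,\infty}$ and $\|G_h\|_{k,\infty}$, and then uniform higher-derivative bounds for $d_h$ via a quantitative implicit-function/Fa\`a di Bruno argument. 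This is essentially the content of the cited works, so your approach buys self-containedness at the price of sketchiness exactly where the work is hardest --- the uniform control of all derivatives of $d_h$, which you flag but do not carry out; the paper avoids this by delegation, and a referee would treat your sketch as acceptable only because it reproduces the structure of \cite{lehrenfeld15,LR16a}. Your observation that $\Theta_h\in(V_h^k)^d$ makes $D^{k+1}\Theta_h\equiv 0$ elementwise is a nice simplification for the $m=k+1$ case that the paper does not need. For the inverse map the two arguments coincide in substance: the paper starts from $\vert\Theta_h^{-1}\vert_{1,\infty,\Theta_h(T)}\lesssim 1$ (from Lemma \ref{propertiesdh}) and inducts using the explicit formula of \cite{ciarlet1972interpolation} expressing $\vert\Theta_h^{-1}\vert_{l,\infty}$ through $\vert\Theta_h\vert_{m,\infty}$ and lower-order seminorms of $\Theta_h^{-1}$; your Neumann-series bound for $(D\Theta_h)^{-1}$ plus induction on differentiating $D\Theta_h^{-1}=\bigl((D\Theta_h)\circ\Theta_h^{-1}\bigr)^{-1}$ is just a hand-derived version of that same formula. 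One minor imprecision: in your monotonicity estimate for the defining relation of $d_h$ the gradient should be that of the polynomial extension $\mathcal{E}_{T'}\phi_h$ at points possibly outside $T'$, not of $\phi_h$ itself; since $|d_h|\lesssim h^2$ and the second derivatives of the extension are uniformly bounded, the lower bound $\gtrsim 1$ still holds, so this does not affect the argument.
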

\begin{proof}The proof is given in the appendix, section \ref{sec:proofs}.
\end{proof}

In the finite element formulation in section \ref{sec:isoparammethod} we will lift functions from $V_h^k$ that are defined on $\Omegalin$ to functions defined on $\Omega_h$. It is useful to know that $L^2$ and $H^1$ norms on domains from the reference configuration $\Omegalin$ and the mapped configuration $\Omega_h$ are equivalent. For this, we have the following result. \\
\begin{lemma}\label{lemF}
  For $v,w \in H^1(\Omega_h)$ with $\operatorname{tr}|_{\Gamma_h} \nabla w \in L^2(\Gamma_h)$
  there holds
\begin{subequations}
\begin{align}
  \Vert v \circ \thetah \Vert_{\Omegalin}^2 & \simeq \Vert v \Vert_{\Omega_h}^2 & \quad \text{and} \quad  && \Vert v \circ \thetah \Vert_{\Gammalin}^2 & \simeq \Vert v \Vert_{\Gamma_h}^2, \\
  \Vert \nabla (w \circ \thetah) \Vert_{\Omegalin}^2 & \simeq \Vert \nabla w \Vert_{\Omega_h}^2 & \quad \text{and} \quad &&  \Vert \nabla (w \circ \thetah) \Vert_{\Gammalin}^2 & \simeq \Vert \nabla w \Vert_{\Gamma_h}^2.     \end{align}
\end{subequations}
\end{lemma}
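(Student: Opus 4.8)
The plan is to use the change of variables $x = \Theta_h(\hat x)$, which maps $\Omegalin$ bijectively onto $\Omega_h$ and $\Gammalin$ onto $\Gamma_h$, together with the uniform bounds on $D\Theta_h$, $(D\Theta_h)^{-1}$, and the Jacobian determinant $J_h := \det D\Theta_h$ that follow from Lemmas~\ref{propertiesdh} and \ref{lem:invtrafo}. First I would record that $\Vert D\Theta_h - \mathrm I\Vert_\infty \lesssim h$ implies, for $h$ sufficiently small, that $J_h \simeq 1$ pointwise and $\Vert (D\Theta_h)^{-1}\Vert_\infty \lesssim 1$; consequently all factors arising from the transformation are bounded above and below by constants independent of $h$ and of the cut position. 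For the volume $L^2$ identity, $\Vert v\circ\Theta_h\Vert_{\Omegalin}^2 = \int_{\Omegalin} |v(\Theta_h(\hat x))|^2 \,d\hat x = \int_{\Omega_h} |v(x)|^2 J_h^{-1}(\Theta_h^{-1}(x))\,dx \simeq \Vert v\Vert_{\Omega_h}^2$, using $J_h \simeq 1$.

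For the surface $L^2$ identity I would use the standard surface transformation formula: if $\hat\Gamma = \Gammalin$ carries the surface measure $d\hat s$ and $\Gamma_h = \Theta_h(\hat\Gamma)$, then the pullback of $ds$ equals $\omega_h\,d\hat s$ with the surface Jacobian $\omega_h = J_h\,\Vert (D\Theta_h)^{-\mathrm T}\hat\nu\Vert$, where $\hat\nu$ is the unit normal on $\Gammalin$. Since $J_h\simeq 1$ and $\Vert(D\Theta_h)^{-\mathrm T}\hat\nu\Vert \simeq 1$ (again from $\Vert D\Theta_h - \mathrm I\Vert_\infty\lesssim h$), we get $\omega_h \simeq 1$ and hence $\Vert v\circ\Theta_h\Vert_{\Gammalin}^2 = \int_{\Gammalin}|v\circ\Theta_h|^2 d\hat s \simeq \int_{\Gammalin}|v\circ\Theta_h|^2\omega_h\,d\hat s = \Vert v\Vert_{\Gamma_h}^2$.

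For the $H^1$ bounds I would apply the chain rule: $\nabla(w\circ\Theta_h)(\hat x) = (D\Theta_h(\hat x))^{\mathrm T}\,(\nabla w)(\Theta_h(\hat x))$ in the volume, and the analogous identity with the surface (tangential) gradient on $\Gammalin$, where one replaces $D\Theta_h$ by its restriction to the tangent space composed with the appropriate projection. In both cases the matrix relating $\nabla(w\circ\Theta_h)$ to $(\nabla w)\circ\Theta_h$ and its inverse are uniformly bounded (the spectral norms of $D\Theta_h^{\mathrm T}$ and $(D\Theta_h)^{-\mathrm T}$ are $\simeq 1$), so pointwise $|\nabla(w\circ\Theta_h)(\hat x)| \simeq |(\nabla w)(\Theta_h(\hat x))|$; combining this with the Jacobian estimate $J_h\simeq 1$ (resp.\ $\omega_h\simeq 1$) and the change of variables exactly as above yields the four equivalences. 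The hypothesis $\mathrm{tr}|_{\Gamma_h}\nabla w\in L^2(\Gamma_h)$ is just what is needed for the last equivalence to make sense.

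The only genuinely delicate point is the surface case: one must justify that $\Gamma_h = \Theta_h(\Gammalin)$ is a well-defined Lipschitz (in fact piecewise smooth) hypersurface so that the surface measure and the transformation formula are legitimate, and that the restriction of $D\Theta_h$ to each planar piece of $\Gammalin$ is nondegenerate with uniformly bounded inverse. Both follow from the shape regularity of the deformed mesh and the estimate $\Vert D\Theta_h - \mathrm I\Vert_\infty\lesssim h$ (assumed in the preceding remark for $h$ small), so I would simply invoke these; the rest is the routine change-of-variables bookkeeping sketched above.
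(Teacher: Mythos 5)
Your proposal is correct and is exactly the standard argument: the paper itself gives no in-text proof but delegates to Lemma~3.3 of \cite{LR16a}, whose content is precisely the change-of-variables reasoning you carry out using $\Vert D\Theta_h - \operatorname{I}\Vert_\infty \lesssim h$, hence $\det D\Theta_h \simeq 1$ and uniformly bounded $(D\Theta_h)^{-1}$, for both volume and surface integrals. One minor reading point: the boundary gradient terms in the lemma are the traces of the \emph{full} gradients on $\Gammalin$ and $\Gamma_h$, so the plain chain rule $\nabla(w\circ\Theta_h) = (D\Theta_h)^{\mathrm T}(\nabla w)\circ\Theta_h$ together with the surface Jacobian $\omega_h\simeq 1$ suffices, without needing the tangential-gradient variant you sketch; this does not affect the validity of your argument.
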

\begin{proof} Follows from Lemma 3.3 \ in \cite{LR16a}. \end{proof}

Below, in the isoparametric finite element discretization we work with the domain approximation $\Omega_h = \Theta_h(\Omegalin)$. The original partial differential equation is however formulated with respect to the domain $\Omega$. For the analysis later on we need a (smooth) bijection $\Phi_h$ on $\widetilde\Omega$ which has the property $\Phi_h(\Gamma_h) = \Gamma$. $\Phi_h := \Psi \circ \Theta_h^{-1}$ has the property $\Phi_h( \Gamma_h) = \Gamma$ and is piecewise smooth. Further, it is a higher order perturbation to the identity.
\begin{lemma} \label{lem:phih} For $h$ sufficiently small $\Phi_h = \Psi \circ \Theta_h^{-1}:\, \widetilde\Omega \to \widetilde\Omega$ defines a homeomorphism with $\Phi_h \in C(\widetilde\Omega) \cap C^{k+1}(\Theta_h(\widetilde{\mathcal{T}}_h))$, which has the following properties:
\begin{subequations}
\begin{align}
\Phi_h(\Omega_h) =\Omega, \\
\Vert \id - \Phi_h \Vert_{\infty,\widetilde\Omega} + h \Vert I - D \Phi_h \Vert_{\infty,\widetilde\Omega} & \lesssim h^{k+1}. \label{eq:estphih}
\end{align}
\end{subequations}
\end{lemma}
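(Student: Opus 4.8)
The plan is to obtain $\Phi_h$ as a composition of two homeomorphisms and to transfer the estimates already known for $\Theta_h-\Psi$ (Lemma~\ref{propertiesdh}) to $\Phi_h-\operatorname{id}$ by a change of variables.

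\textbf{Homeomorphism, regularity and $\Phi_h(\Omega_h)=\Omega$.} First I would record that, for $h$ small, $\Theta_h\in(V_h^k)^d$ is a bijection of $\widetilde\Omega$ with $\|D\Theta_h-I\|_\infty\lesssim h$ (Lemma~\ref{propertiesdh}) and with $\Theta_h^{-1}$ controlled elementwise by Lemma~\ref{lem:invtrafo}; being a continuous bijection of a compact set it is a homeomorphism, and since each $\Theta_h|_T$ is a polynomial with non-degenerate Jacobian it is a $C^\infty$-diffeomorphism of $T$ onto $\Theta_h(T)$. Next I would use that $\Psi$ from \eqref{eq:psi1}--\eqref{eq:psi}, suitably extended by the identity away from $\Gamma$ as in \cite{lehrenfeld15}, is a homeomorphism of $\widetilde\Omega$ that equals the identity outside $\Omega^{\Gamma,+}$, is $C^{k+1}$ on each element (implicit function theorem for \eqref{eq:psi1}, using $\phi\in C^{m+1}(U_\Gamma)$, $m\ge k$, and $|\nabla\phi|\gtrsim 1$ so that $G$ is well defined and smooth), and satisfies $\Psi(\Gammalin)=\Gamma$, $\Psi(\Omegalin)=\Omega$ by construction. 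Then $\Phi_h=\Psi\circ\Theta_h^{-1}$ is a homeomorphism of $\widetilde\Omega$, $\Phi_h\in C(\widetilde\Omega)$, and on each deformed element $\Phi_h|_{\Theta_h(T)}=\Psi\circ(\Theta_h|_T)^{-1}$ is a composition of $C^{k+1}$ maps, giving $\Phi_h\in C^{k+1}(\Theta_h(\widetilde{\mathcal{T}}_h))$. Since the cut topology is invariant under $\Theta_h$, $\Theta_h(\Omegalin)=\Omega_h$ and $\Theta_h(\Gammalin)=\Gamma_h$, whence $\Phi_h(\Omega_h)=\Psi(\Theta_h^{-1}(\Theta_h(\Omegalin)))=\Psi(\Omegalin)=\Omega$ and, likewise, $\Phi_h(\Gamma_h)=\Gamma$.

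\textbf{The bound \eqref{eq:estphih}.} The key observation is that for $z\in\widetilde\Omega$, writing $x:=\Theta_h^{-1}(z)$, one has $\Phi_h(z)-z=\Psi(x)-\Theta_h(x)$, so that $\|\operatorname{id}-\Phi_h\|_{\infty,\widetilde\Omega}=\|\Psi-\Theta_h\|_{\infty,\widetilde\Omega}$. By Lemma~\ref{propertiesdh} this is $\lesssim h^{k+1}$ on $\Omega^{\Gamma}$; on $\Omega^{\Gamma,+}\setminus\Omega^{\Gamma}$ the same bound holds because there $\Theta_h$ is the hierarchical extension $\operatorname{id}+\mathcal{E}_h D_h$ of the deformation and the corresponding estimate for $\|\Psi-\Theta_h\|_\infty$ is provided by \cite[Sec.~3.3]{LR16a}; outside $\Omega^{\Gamma,+}$ both maps are the identity. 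For the Jacobian, the chain rule gives $D\Phi_h(z)=D\Psi(x)(D\Theta_h(x))^{-1}$, hence
\[
  I-D\Phi_h(z)=\bigl(D\Theta_h(x)-D\Psi(x)\bigr)\,(D\Theta_h(x))^{-1},
\]
so that $\|I-D\Phi_h\|_{\infty,\widetilde\Omega}\le\|D(\Theta_h-\Psi)\|_{\infty,\widetilde\Omega}\,\|(D\Theta_h)^{-1}\|_{\infty,\widetilde\Omega}\lesssim h^k$, using $\|D(\Theta_h-\Psi)\|_\infty\lesssim h^k$ from Lemma~\ref{propertiesdh} and $\|(D\Theta_h)^{-1}\|_\infty\lesssim 1$ (from $\|D\Theta_h-I\|_\infty\lesssim h$ by a Neumann series, or Lemma~\ref{lem:invtrafo}). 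Multiplying the last inequality by $h$ and adding gives \eqref{eq:estphih}.

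\textbf{Main obstacle.} The computational core — the two identities $\Phi_h(z)-z=(\Psi-\Theta_h)(\Theta_h^{-1}(z))$ and $I-D\Phi_h=(D\Theta_h-D\Psi)(D\Theta_h)^{-1}$ — is short, and the remaining algebra is routine. The delicate point is that Lemma~\ref{propertiesdh} only states $\|\Theta_h-\Psi\|_\infty\lesssim h^{k+1}$ on $\Omega^{\Gamma}$, whereas here the estimate is needed on all of $\widetilde\Omega$, i.e.\ also on the extension ring $\Omega^{\Gamma,+}\setminus\Omega^{\Gamma}$, where $\Theta_h$ is built from the hierarchical extension $\mathcal{E}_h D_h$ of section~\ref{sec:mapping:construction}. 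Showing that this extension matches the behaviour of $\Psi$ there to order $h^{k+1}$ is exactly what the construction of $P_h^2$ and the hierarchical-basis property are designed to deliver, and the corresponding estimate has to be quoted from \cite{LR16a}; a secondary, more technical point is assembling the precise global-homeomorphism and $C^{k+1}$-smoothness statements for $\Psi$ from \cite{lehrenfeld15}.
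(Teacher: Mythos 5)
Your proposal is correct and follows essentially the route the paper takes, since the paper's own ``proof'' is just the citation of Lemma 5.3 in \cite{LR16a}: there, as in your argument, $\Phi_h$ is treated as a composition of homeomorphisms, $\Phi_h(\Omega_h)=\Psi(\Omegalin)=\Omega$, and the bounds follow from $\Phi_h-\id=(\Psi-\Theta_h)\circ\Theta_h^{-1}$ together with $I-D\Phi_h=\bigl(D\Theta_h-D\Psi\bigr)(D\Theta_h)^{-1}$ and the estimates of Lemma \ref{propertiesdh}. The only slight imprecision is the phrase ``extended by the identity away from $\Gamma$'': the global $\Psi$ is obtained by a smooth extension of the deformation that is the identity only outside $\Omega^{\Gamma,+}$ (a literal identity extension outside the cut elements would be discontinuous), but you correctly identify this extension-ring estimate as the part to be quoted from \cite{LR16a}.
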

\begin{proof}
  See Lemma 5.3 in \cite{LR16a}.
\end{proof}

\section{Isoparametric fictitious domain finite element formulation} \label{sec:isoparammethod}
With respect to the geometry approximations $\Omega_h$ and $\Gamma_h$  we  want to formulate a finite element method based on Nitsche's method. We consider the classic version of Nitsche's method which is symmetric. Variations as in \cite{boiveau2016fictitious} are also possible. 
Let $V_h$ be the space $V_h^k$ restricted to $\Omega^\mT$, $V_h:=\{v \in H^1(\Omega^\mT) \mid  v|_{T} \in P^k(T), T \in \mT^\Gamma \}$, where $P^k(T)$ is the space of polynomials on $T$ up to degree $k$.
Induced by the parametric mapping $\Theta_h$ we define a corresponding isoparametric finite element space 
\begin{equation}
  \mathcal{V}_h := \{v_h \circ \Theta_h^{-1}, v_h \in V_h\} = \{ v_h, v_h \circ \Theta_h \in V_h\}.
\end{equation}
We further introduce the infinite dimensional space which allows for the evalution of the normal derivative on the discrete boundary $\Gamma_h$,
\begin{equation}
  \Vreg := H^2(\Omega_h) \oplus \{u \circ \Phi_h \mid u \in H^2(\Omega)\}.
\end{equation}
Next, we apply the Nitsche variational formulation to discretize the problem based on $\Omega_h$, $\Gamma_h$ and $\mathcal{V}_h$.
We define the isoparametric fictitious domain finite element method as: \\
Find $u \in \mathcal{V}_h$, such that for all $v \in \mV_h$ there holds
\begin{equation}\label{eq:Nitsche1}
  B_h(u,v) := A_h(u,v) + J_h(u,v) := a_h(u,v) + N_h(u,v) + J_h(u,v) = f_h(v),
\end{equation}
with the following bi- and linear forms.
For the Nitsche formulation we introduce the following bilinear forms which are well-defined for $u,v \in \mV_h + \Vreg$
\begin{subequations} \label{eq:blflfs}
  \begin{align}
    a_h(u,v) & := \int_{\Omega_h} \nabla u \cdot \nabla v \, dx, & \hspace*{-6cm} & \\ 
   N_h(u,v) & := N_h^c(u,v) + N_h^c(v,u) + N_h^s(u,v) \text{ with } \hspace*{-6cm}\\
  N_h^c(u,v) & := \int_{\Gamma_h} (- \partial_n u) v \ ds, &  \hspace*{-6cm} 
N_h^s(u,v) & := \frac{\lambda}{h} \int_{\Gamma_h} u v \ ds. \\
    \intertext{
    Here, $\lambda$ is a parameter of the Nitsche method. 
We note that $N_h^c(v,w)$ and $N_h^s(w,v)$ are also well defined for $v \in \mV_h + \Vreg$ and $w \in L^2(\Gamma)$ and further define
}
             f_h(v) & :=  \int_{\Omega_h} f^e v \, dx + N_h^c(v,u_D^e) + N_h^s(u_D^e,v), \quad v \in \mV_h + \Vreg. \hspace*{-4cm} 
  \end{align}
  Here, we used $f^e$ and $u_D^e$, extension of $f$ and $u_D$ which we briefly discuss.
  As $f$ is only defined on $\Omega$ and the discretization is defined on $\Omega_h$, we assume that the source term $f$ is (smoothly) extended to $\Omega^e = \Omega_h \cup \Omega$ such that $f^e= f$ on $\Omega$ holds. Correspondingly we assume that $u^e_D$ is a (smooth) extension of the boundary data from $\Gamma$ to a neighborhood $\Gamma^e \supset \Gamma \cup \Gamma_h$ so that $u^e_D = u_D$ on $\Gamma$.

  We split the bilinear form $A_h=A_h^1+A_h^2$ and the linear form $f_h=f_h^1+f_h^2$ with 
  \begin{align}
    A_h^1(u,v) & := a_h(u,v) + N_h^c(u,v), & A_h^2(u,v) & = N_h^c(v,u) + N_h^s(u,v), \label{eq:split1}\\
      f_h^1(v) & := \int_{\Omega_h} f^e v \, dx \quad \text{ and } & f_h^2(v) & = N_h^c(v,u_D^e) + N_h^s(u_D^e,v). \label{eq:split2}
  \end{align}
The bilinear form $A_h^1$ together with $f_h^1$ is responsible for consistency whereas $A_h^2$ and $f_h^2$ are added for (consistently) realizing symmetry and additional control of the boundary values.
To provide stability in the case of small cuts we add the (higher order) ghost penalty stabilization with the bilinear form
\begin{equation}
  \hspace*{-0.125cm}  J_h(u,v) := \sum_{l=1}^k \gamma_l \!\!\!\!\sum_{F \in \mF_{h}}\!\!\!\! h^{2l-1} \!\!\!\!\! \int_{\Theta_h(F)} \spacejump{ \partial_n^l u}  \spacejump{ \partial_n^l v}  \ ds, ~~~u,v \in \mV_h.
  %+ \!\!\!\!\!\!\!\bigcup_{\ \ \ T \in \mT_h^{\Gamma,+}}\!\!\!\!H^{k+1}(\Theta_h(T)).
\end{equation}
\end{subequations}
Here, $\gamma_l$ are stabilization parameters indepedent of $h$, $\spacejump{\cdot}$ denotes the usual jump operator across (curved) element interfaces and $\partial_n^l$ is the $l$-th directional derivative in the direction normal to a facet $\Theta_h(F)$.
The stabilization introduces properly scaled penalties for jumps in (higher order) weak discontinuities across element interfaces close to the domain boundary. 
Thereby stiffness between the values in $\Theta_h(\Omega^\mT)$ and $\Omega_h$ is introduced independently of the cut position, cf. Lemma \ref{lem:gp} below. 
The benefit of this stabilization is twofold. On the one hand, trace inverse inequalities that are required in (the analysis of) Nitsche's method can be applied independent of the shape regularity of the cut elements. On the other hand, the conditioning of arising linear systems is robust with respect to the position of the cuts within the elements. A significant drawback of this stabilization is the fact that the coupling relations of degrees of freedom change which changes the sparsity structure of arising linear systems. As the facet-based stabilization is only added in the vicinity of the domain boundary this drawback is often outweighed by the benefits of the stabilization.

We note that in an implementation of the method, all integrals in \eqref{eq:blflfs} will be computed after transformation to the reference domains $\Omegalin$, $\Gammalin$ and $F$ where (higher order) quadrature rules for straight cuts can be applied.

\section{A priori error estimate} \label{sec:analysis}

According to the bi- and linear forms of the discrete variational formulation we introduce proper norms that we will use in the analysis below. 
\begin{subequations}
\begin{align} \label{eq:norms}
  \Vert v \Vert_A^2 := &a_h(v,v) + \Vert v \Vert_{\frac12,h,\Gamma_h}^2 + \Vert \partial_n v \Vert_{-\frac12,h,\Gamma_h}^2, \quad & v & \in \mV_h + \Vreg, \\
\text{ with } & \quad \Vert v \Vert_{\pm\frac12,h,\Gamma_h}^2  := h^{\mp 1} \Vert v \Vert_{\Gamma_h}^2, \quad & v & \in L^2(\Gamma_h), \nonumber \\
 \Vert v \Vert_J^2 := &J_h(v,v), \quad \Vert v \Vert_B^2 := \Vert v \Vert_A^2 + \Vert v \Vert_J^2, \quad & v & \in \mV_h.
\end{align}
\end{subequations}
The $\Vert \cdot \Vert_A$-norm is the norm that is usually used to analyse Nitsche-type methods whereas $\Vert \cdot \Vert_J$ corresponds to the discrete energy induced by the ghost penalty terms.
We note that norms are defined with respect to the domain $\Omega_h$ and the (curved) facets $\Theta_h(F)$, $F \in \mathcal{F}_h$. The analysis combines a Strang-type strategy for the estimation of geometry errors with techniques from the analysis of ghost penalty type discretizations, cf. for instance \cite{burman2012fictitious,MassingLarsonLoggEtAl2013a}.
We start with gathering versions of Galerkin orthogonality, coercivity and continuity in sections \ref{sec:galorth}, \ref{sec:coerc} and \ref{sec:conti} to apply a Strang-type lemma in section \ref{sectStrang}. It then remains to bound consistency errors in section \ref{sec:integrals}. Finally, we show best approximation of the solution in $\mathcal{V}_h$ to obtain (almost) optimal error bounds in the $\Vert \cdot \Vert_A$-norm in section \ref{sec:approx}.

\subsection{Galerkin orthogonality} \label{sec:galorth}
For later reference, when analysing errors stemming from the geometry approximation, we introduce bi- and linear forms with respect to the exact geometry.
In order to define discrete functions on the exact geometry we make use of the mapping $\Phi_h$ from Lemma \ref{lem:phih} which maps $\Omega_h$ on $\Omega$.
For $ u, v \in H^2(\Omega) \oplus \{v \circ \Phi_h^{-1}, v \in \mV_h\}$ we define 
\begin{equation}
  A(u,v) := \int_{\Omega} \nabla u \nabla v \, dx + \int_{\partial \Omega} (-\partial_n u) v \, ds,
  \quad \text{ and } \quad
%   + \int_{\partial \Omega} (-\partial_n v) u \, ds + \frac{\lambda}{h} \int_{\partial \Omega} u v \, ds, \\
  f(v) := \int_{\Omega} f^e v \, dx.
  % + \int_{\partial \Omega} (-\partial_n v) u_D^e \, ds + \frac{\lambda}{h} \int_{\partial \Omega} u_D^e v \, ds.
\end{equation}
% $ $\\
\begin{lemma} \label{lem:galorth}
  Let $u \in H^2(\Omega)$ be the solution to \eqref{eq:ellmodel}. Then there holds
  \begin{equation}
    A(u,w_h \circ \Phi_h^{-1}) = f(w_h \circ \Phi_h^{-1}) \quad \text{for all } w_h \in \mathcal{V}_h.
  \end{equation}
\end{lemma}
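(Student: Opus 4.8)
The statement to prove is Galerkin orthogonality: for the exact solution $u \in H^2(\Omega)$ of \eqref{eq:ellmodel} and any $w_h \in \mathcal{V}_h$, we have $A(u, w_h \circ \Phi_h^{-1}) = f(w_h \circ \Phi_h^{-1})$. The plan is to unwind the definition of $A(\cdot,\cdot)$ and $f(\cdot)$, insert the PDE, integrate by parts on the exact domain $\Omega$, and use that $u$ satisfies the Dirichlet condition $u = u_D$ on $\Gamma = \partial\Omega$. Since $w_h \circ \Phi_h^{-1}$ lives on $\Omega$ (by Lemma~\ref{lem:phih}, $\Phi_h(\Omega_h) = \Omega$, so $\Phi_h^{-1}$ maps $\Omega_h$ onto $\Omega$ and $w_h \circ \Phi_h^{-1} \in H^2(\Omega) \oplus \{v \circ \Phi_h^{-1}, v \in \mathcal{V}_h\}$), all terms in $A$ and $f$ are well-defined.

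The key steps in order:

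\emph{Step 1 (Set-up).} Write $v := w_h \circ \Phi_h^{-1} \in H^1(\Omega)$ (indeed better, since $\Phi_h^{-1}$ is piecewise $C^{k+1}$ and $w_h$ is piecewise polynomial, so $v$ has enough regularity for the boundary integral $\int_{\partial\Omega}(-\partial_n u)v\,ds$ to make sense, given $u \in H^2(\Omega)$). Expand
\begin{equation*}
  A(u,v) = \int_{\Omega} \nabla u \cdot \nabla v \, dx + \int_{\partial\Omega} (-\partial_n u)\, v \, ds.
\end{equation*}

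\emph{Step 2 (Integration by parts).} Since $u \in H^2(\Omega)$, Green's formula gives $\int_{\Omega} \nabla u \cdot \nabla v \, dx = -\int_{\Omega} \Delta u\, v \, dx + \int_{\partial\Omega} (\partial_n u)\, v \, ds$. Adding the boundary term from $A$, the two boundary contributions cancel exactly, leaving $A(u,v) = -\int_{\Omega} \Delta u\, v\,dx$.

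\emph{Step 3 (Insert the PDE).} By \eqref{eq:ellmodel}, $-\Delta u = f$ in $\Omega$, hence $A(u,v) = \int_{\Omega} f\, v \, dx$. Since $f^e = f$ on $\Omega$ by construction of the extension, $\int_{\Omega} f\, v\, dx = \int_{\Omega} f^e\, v\, dx = f(v)$, which is exactly the claim.

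\emph{Remarks on the main obstacle.} There is no real obstacle here; this is the standard consistency argument, and the only point requiring a little care is that all objects are well-defined on $\Omega$ — i.e. that $v = w_h \circ \Phi_h^{-1}$ has a meaningful trace and normal-derivative pairing against $\partial_n u$ on $\partial\Omega$. This is guaranteed by Lemma~\ref{lem:phih} ($\Phi_h$ is a homeomorphism, $C^{k+1}$ on each mapped simplex, a higher-order perturbation of the identity) together with $u \in H^2(\Omega)$, so that $\partial_n u \in L^2(\partial\Omega)$ and $v|_{\partial\Omega} \in L^2(\partial\Omega)$; this is precisely why the space $H^2(\Omega) \oplus \{v\circ\Phi_h^{-1}, v\in\mathcal{V}_h\}$ was introduced for the argument of $A$. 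Note also that no Dirichlet-type or penalty terms survive because $A$ as defined contains only the consistent part (no $N_h^s$-analogue on $\Omega$), so nothing extra needs to be checked. Thus the proof is a three-line computation once the well-definedness is recorded.
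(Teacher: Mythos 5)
Your proof is correct and follows essentially the same route as the paper: integrate the volume term by parts, observe that the resulting boundary integral cancels with the $\int_{\partial\Omega}(-\partial_n u)v\,ds$ term in $A$, and then use $-\Delta u = f$ together with $f^e = f$ on $\Omega$. The extra remarks on well-definedness of $v = w_h\circ\Phi_h^{-1}$ are harmless added detail (note the Dirichlet condition $u=u_D$ mentioned in your plan is in fact never needed, as your own steps confirm).
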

\begin{proof}
  Applying partial integration on the  volume integral in $A(\cdot,\cdot)$ we obtain
  $\int_{\Omega} (-\Delta u - f^e) v\, dx = \int_{\Omega} (-\Delta u - f) v\, dx = 0$ and the boundary integral stemming from partial integration cancels out with the remaining boundary integral in $A(\cdot,\cdot)$.
\end{proof}

\subsection{Coercivity}\label{sec:coerc}
Due to the mesh transformation the functions in $\mathcal{V}_h$ are no longer piecewise polynomials of degree $k$, so that in general there holds $D^{k+1}v \neq 0$ for $v \in \mathcal{V}_{h}$. Nevertheless, we have that $D^{k+1}v$ is small in $\mathcal{T}_h^{\Gamma,+}$ in the following sense.
\begin{lemma} \label{lemhighder}
  For $v \in \mathcal{V}_{h}$ and $T \in \mathcal{T}_h^{\Gamma,+}$ there holds
  \begin{equation}
    \Vert D^{k+1} v \Vert_{\infty,\Theta(T)} \lesssim h^{-k} \Vert v \Vert_{\infty,\Theta(T)}.
  \end{equation}
\end{lemma}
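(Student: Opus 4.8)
The plan is to use that every $v\in\mathcal{V}_h$ is, on $\Theta(T)$, of the form $v=v_h\circ\Theta_h^{-1}$ with $v_h|_T\in P^k(T)$, so that the $(k{+}1)$-st derivative of $v$ is produced entirely by the curvature of the inverse mesh transformation, whose derivatives up to order $k{+}1$ are already controlled by Lemma~\ref{lem:invtrafo}.

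First I would apply the (multivariate) Fa\`a di Bruno formula to $D^{k+1}\bigl(v_h\circ\Theta_h^{-1}\bigr)$ on $\Theta(T)$. This writes $D^{k+1}\bigl(v_h\circ\Theta_h^{-1}\bigr)$ as a finite sum, with combinatorial coefficients depending only on $k$ and $d$, of terms of the form
\[
 \bigl(D^j v_h\bigr)\circ\Theta_h^{-1}\;\cdot\;\prod_{i=1}^{j} D^{m_i}\Theta_h^{-1},
 \qquad m_1+\dots+m_j=k+1,\ m_i\ge 1,\ 1\le j\le k+1.
\]
The key observation is that the unique term with $j=k+1$ forces $m_1=\dots=m_{k+1}=1$ and contains the factor $\bigl(D^{k+1}v_h\bigr)\circ\Theta_h^{-1}$, which vanishes since $v_h|_T$ is a polynomial of degree at most $k$; all remaining terms have $1\le j\le k$, hence involve only $D^jv_h$ with $j\le k$ and $D^{m_i}\Theta_h^{-1}$ with $1\le m_i\le k+1$.

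Next I would estimate the two types of factors in the surviving terms. By Lemma~\ref{lem:invtrafo}, $\Vert D^{m_i}\Theta_h^{-1}\Vert_{\infty,\Theta(T)}\lesssim 1$ for all $m_i\in\{1,\dots,k+1\}$, so the product of transformation derivatives in each term is uniformly bounded. For the finite-element factor, since $v_h|_T$ is a polynomial of degree $\le k$ on the shape-regular simplex $T$ with $h_T\simeq h$, the standard inverse (Markov) inequality combined with affine scaling from the reference simplex gives $\Vert D^j v_h\Vert_{\infty,T}\lesssim h^{-j}\Vert v_h\Vert_{\infty,T}$ for $0\le j\le k$, with a constant that is uniform thanks to shape regularity. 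Because $\Theta_h^{-1}$ maps $\Theta(T)$ bijectively onto $T$, we have $\Vert (D^j v_h)\circ\Theta_h^{-1}\Vert_{\infty,\Theta(T)}=\Vert D^j v_h\Vert_{\infty,T}$ and $\Vert v_h\Vert_{\infty,T}=\Vert v\Vert_{\infty,\Theta(T)}$, so $\Vert (D^j v_h)\circ\Theta_h^{-1}\Vert_{\infty,\Theta(T)}\lesssim h^{-j}\Vert v\Vert_{\infty,\Theta(T)}$. Multiplying the bounds, each term of the Fa\`a di Bruno sum is bounded by $C\,h^{-j}\Vert v\Vert_{\infty,\Theta(T)}\le C\,h^{-k}\Vert v\Vert_{\infty,\Theta(T)}$, using $h\lesssim 1$ and $j\le k$; summing the finitely many terms yields the claim.

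The only mildly delicate point is the bookkeeping in the higher-order chain rule — in particular, checking that the $j=k+1$ contribution is precisely the one that drops out — while everything else reduces to the standard inverse inequality on a shape-regular element and the bounds on $\Theta_h^{-1}$ from Lemma~\ref{lem:invtrafo}. (One could also avoid Fa\`a di Bruno by arguing inductively on the derivative order, differentiating the identity $v\circ\Theta_h=v_h$ repeatedly on $T$ and solving for the top-order derivative of $v$; the combinatorial identity is, however, the cleanest route.)
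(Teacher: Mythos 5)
Your proposal is correct and is essentially the paper's own argument: the paper likewise writes $\hat v:=v\circ\Theta_h\in P^k(T)$, invokes the higher-order chain rule estimate of Ciarlet--Raviart (the multivariate Fa\`a di Bruno bound), notes that the top-order term vanishes since $D^{k+1}\hat v=0$, and combines the polynomial inverse inequality $|\hat v|_{j,\infty,T}\lesssim h^{-j}\Vert\hat v\Vert_{\infty,T}$ with the bounds on $\Theta_h^{-1}$ from Lemma~\ref{lem:invtrafo}. Your bookkeeping of the surviving terms and the norm equivalence under the bijection $\Theta_h$ matches the paper's proof step for step.
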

\begin{proof}The elementary proof is given in the appendix, section \ref{sec:proofs}.
\end{proof}
Before we can prove coercivity, we need an adapted version of Theorem 5.1 in \cite{MassingLarsonLoggEtAl2013a} which characterizes the ghost penalty mechanism.
\begin{lemma}\label{lem:gp}
  For two neighboring (curved) elements $T_i=\Theta_h(\hat{T}_i),~i=1,2$, $\hat{T}_i \in \mathcal{T}_h^{\Gamma,+},~i=1,2$ and $F$ the dividing (curved) element interface $F = \Theta(\hat{F})$, $\hat{F} \in \mathcal{F}_h$ and
%  a finite element function
  $v \in \mathcal{V}_h$ there holds
\begin{subequations}
\begin{align}
    \Vert v \Vert_{T_1}^2 & \lesssim \Vert v \Vert_{T_2}^2 + \sum_{l=1}^k h^{2l+1} \int_F \spacejump{\partial_n^l v} \spacejump{\partial_n^l v} \, ds , \label{eq:a} \\
    \Vert \nabla v \Vert_{T_1}^2 & \lesssim \Vert \nabla v \Vert_{T_2}^2 + \sum_{l=1}^k h^{2l-1} \int_{F} \spacejump{\partial_n^l v} \spacejump{\partial_n^l v}\, ds, \label{eq:b}
\end{align}
\end{subequations}
for sufficiently small mesh sizes $h$. 
\end{lemma}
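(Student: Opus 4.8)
The strategy is to pull everything back to the reference (straight) configuration via $\Theta_h$, apply the known polynomial ghost-penalty estimate there (Theorem 5.1 in \cite{MassingLarsonLoggEtAl2013a}), and then push the estimate forward, carefully tracking that all constants arising from $\Theta_h$ are $h$-independent thanks to Lemmas \ref{propertiesdh} and \ref{lem:invtrafo}. Set $\hat v := v \circ \Theta_h \in V_h$, so that $\hat v|_{\hat T_i} \in P^k(\hat T_i)$ is a genuine polynomial on each reference simplex $\hat T_i$, and $\hat F = \Theta_h^{-1}(F)$ is a straight facet. By Lemma \ref{lemF} (or the elementary change-of-variables bounds underlying it, using $\|D\Theta_h - I\|_\infty \lesssim h$ so the Jacobian is uniformly bounded above and below), we have $\|v\|_{T_i}^2 \simeq \|\hat v\|_{\hat T_i}^2$ and $\|\nabla v\|_{T_i}^2 \simeq \|\nabla \hat v\|_{\hat T_i}^2$, with constants independent of $h$ and of the cut position. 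So it suffices to establish \eqref{eq:a}--\eqref{eq:b} with $v,T_i,F$ replaced by $\hat v, \hat T_i, \hat F$, which is precisely the classical (affine) ghost-penalty result: for polynomials $\hat v$ of degree $k$ on two simplices sharing a facet $\hat F$, $\|\hat v\|_{\hat T_1}^2 \lesssim \|\hat v\|_{\hat T_2}^2 + \sum_{l=1}^k h^{2l+1}\int_{\hat F} \jump{\partial_{\hat n}^l \hat v}^2\,ds$, and similarly for gradients with scaling $h^{2l-1}$.

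The one genuine subtlety is that the jump terms in the statement are phrased intrinsically on the \emph{curved} facet $F = \Theta_h(\hat F)$ with the curved normal $n$, not as pullbacks of the reference jumps $\jump{\partial_{\hat n}^l \hat v}$. So after obtaining the reference-side inequality I must compare $\sum_l h^{2l\pm1}\int_{\hat F}\jump{\partial_{\hat n}^l\hat v}^2\,ds$ with $\sum_l h^{2l\pm1}\int_{F}\jump{\partial_{n}^l v}^2\,ds$. This is where Lemma \ref{lemhighder} enters: on $T\in\mathcal T_h^{\Gamma,+}$ the function $v\in\mathcal V_h$ satisfies $\|D^{k+1}v\|_{\infty,\Theta(T)}\lesssim h^{-k}\|v\|_{\infty,\Theta(T)}$, i.e. $v$ behaves like a degree-$k$ polynomial up to controllable higher-order remainders. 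Writing $\partial_n^l v$ in terms of $\partial_{\hat n}^l \hat v$ via the chain rule introduces factors built from $D\Theta_h, D^2\Theta_h,\dots,D^l\Theta_h$ (all $\lesssim 1$ by Lemma \ref{lem:invtrafo}) plus derivatives $D^{l+1}v,\dots$ which are not literally zero but are bounded using Lemma \ref{lemhighder}; these extra contributions are of the same order $h^{2l\pm1}$ times lower-order jump/mass terms, hence can be absorbed on the left after a standard argument for $h$ sufficiently small. The measure comparison $ds_F \simeq ds_{\hat F}$ (from $\|D\Theta_h - I\|_\infty\lesssim h$) and the fact that the cut topology — hence the facet set $\mathcal F_h$ — is unchanged under $\Theta_h$ are also used. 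I would organize this comparison as a separate sublemma or a clearly delimited estimate, since it is the heart of the adaptation; everything else is bookkeeping.

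The main obstacle, then, is \textbf{not} the polynomial ghost-penalty inequality itself (which is quoted) but controlling the mismatch between the curved normal-derivative jumps appearing in $J_h$ and the flat ones on the reference configuration, uniformly in $h$ and in the cut position. The key inputs making this work are: (i) the $h$-uniform $C^{k+1}$ bounds on $\Theta_h$ and $\Theta_h^{-1}$ (Lemma \ref{lem:invtrafo}); (ii) the "almost polynomial" bound of Lemma \ref{lemhighder}, which says the finite element functions in $\mathcal V_h$ differ from their polynomial pullbacks only at order $h^{k+1}$; and (iii) the smallness of $\Theta_h - \id$ (Lemma \ref{propertiesdh}), which ensures all the perturbation terms can be absorbed provided $h$ is below a fixed threshold — exactly the hypothesis of the lemma.
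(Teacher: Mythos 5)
Your plan is correct in outline, but it takes a genuinely different route from the paper. The paper never pulls the estimate back to the straight configuration and never compares jumps on $F$ with jumps on $\hat F$: it redoes the argument of \cite{MassingLarsonLoggEtAl2013a} directly on the curved elements, choosing a ball $B_1\subset T_1$ reachable from a patch $F^\ast\subset F$ along the normal of $F$ (with $\Vert v\Vert_{T_1}\simeq\Vert v\Vert_{B_1}$ by pullback and norm equivalence), reflecting it through $F$ by the mirror map $M(x_F+\gamma n_F)=x_F-\gamma n_F$, and Taylor-expanding $v|_{T_1}$ and $v|_{T_2}$ along $n_F$ up to order $k$; the penalized curved-facet jumps $\jump{\partial_n^l v}$ then appear natively, and the only price of the curved geometry is the nonvanishing remainder $\partial_n^{k+1}v$, which Lemma \ref{lemhighder} turns into an $h^2(\Vert v\Vert_{T_1}^2+\Vert v\Vert_{T_2}^2)$ term absorbed for small $h$. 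Your route instead uses the flat theorem as a black box and pays in the facet-term comparison, which can be made to work but whose bookkeeping you should pin down: the chain rule for $\partial_{\hat n}^l(v\circ\Theta_h)$ involves $D^m v$ only for $m\le l$ (not $D^{l+1}v$, as you state — Lemma \ref{lemhighder} enters the paper via the Taylor remainder, and in your route at most through trace estimates of top-order derivatives on curved elements); all chain-rule contributions with $m<l$ are harmless because crude trace/inverse bounds give a factor $h^{2(l-m)}\le h^2$ even though $D^j\Theta_h$, $j\ge 2$, is only $O(1)$ (Lemma \ref{lem:invtrafo}), whereas for $m=l$ one must extract $\jump{\partial_n^l v}$ and check that every correction carries an $O(h)$ factor coming from $\Vert D\Theta_h-I\Vert_\infty\lesssim h$, from $\jump{D\Theta_h}$ across $\hat F$, and from the misalignment of $D\Theta_h\hat n$ with the unit normal $n$ of $F$; you also need uniform trace and inverse inequalities for the mapped, non-polynomial functions on curved elements, which follow from Lemmas \ref{lemF} and \ref{lem:invtrafo}. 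What each approach buys: the paper's mirror-point argument avoids any chain-rule/jump comparison and any jumps of derivatives of $\Theta_h$, at the cost of redoing the flat proof; yours reuses the flat result verbatim and would transfer to other flat-configuration ghost-penalty estimates, at the cost of the perturbation bookkeeping sketched above, which in your write-up is asserted ("standard absorption") rather than carried out.
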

\begin{proof} The proof which is based on a Taylor series expansion and the result of Lemma
  \ref{lemhighder} is provided in the appendix, section \ref{sec:proofs}.
\end{proof}

With these preparations we can show coercivity.
\begin{lemma} \label{lem:coercivity}
  For $\lambda$ sufficiently large and $h$ sufficiently small, so that Lemma \ref{lem:gp} holds true, there holds
  \begin{equation}
    B_h(u_h,u_h) \gtrsim \Vert u_h \Vert_A^2 + \Vert u_h \Vert_J^2 \quad \text{for all } u_h \in \mathcal{V}_h.
  \end{equation}
\end{lemma}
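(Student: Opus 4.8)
The plan is to expand $B_h(u_h,u_h)$, to dominate the sign-indefinite Nitsche consistency term by a weighted Young inequality, and to absorb the remaining normal-derivative boundary term by a ghost-penalty-enhanced trace inverse inequality; only then is $\lambda$ chosen large enough. Writing out the definitions,
\begin{equation*}
  B_h(u_h,u_h) = \Vert \nabla u_h \Vert_{\Omega_h}^2 - 2 \int_{\Gamma_h} (\partial_n u_h)\, u_h \, ds + \frac{\lambda}{h} \Vert u_h \Vert_{\Gamma_h}^2 + \Vert u_h \Vert_J^2 ,
\end{equation*}
whereas $\Vert u_h \Vert_A^2 + \Vert u_h \Vert_J^2 = \Vert \nabla u_h \Vert_{\Omega_h}^2 + h^{-1} \Vert u_h \Vert_{\Gamma_h}^2 + h \Vert \partial_n u_h \Vert_{\Gamma_h}^2 + \Vert u_h \Vert_J^2$. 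So it remains to (i) control the cross term and (ii) recover the $h \Vert \partial_n u_h \Vert_{\Gamma_h}^2$ contribution from the lower bound.

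The key ingredient I would establish first is the ghost-penalty trace inverse inequality
\begin{equation*}
  h \Vert \partial_n u_h \Vert_{\Gamma_h}^2 \leq C_{\mathrm{tr}} \big( \Vert \nabla u_h \Vert_{\Omega_h}^2 + \Vert u_h \Vert_J^2 \big), \qquad u_h \in \mV_h,
\end{equation*}
with $C_{\mathrm{tr}}$ independent of $h$ and of the position of $\Gamma$ in the mesh. On a single curved cut element $T = \thetah(\hat T)$, $\hat T \in \mT_h^\Gamma$, I would use $|\partial_n u_h| \leq |\nabla u_h|$ and pull back to the reference simplex $\hat T$ via $\thetah$, which by Lemmas \ref{propertiesdh}, \ref{lem:invtrafo} and \ref{lemF} is a bi-Lipschitz perturbation of the identity with constants close to one; since $u_h \circ \thetah$ is a polynomial of degree $k$ on $\hat T$, the standard trace inverse inequality for polynomials on a simplex cut by a hyperplane applies with a constant that does not depend on how $\Gammalin$ cuts $\hat T$, and transporting it back gives $h \Vert \nabla u_h \Vert_{\Gamma_h \cap T}^2 \lesssim \Vert \nabla u_h \Vert_{T}^2$. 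Summation over $\hat T \in \mT_h^\Gamma$ yields $h \Vert \partial_n u_h \Vert_{\Gamma_h}^2 \lesssim \sum_{\hat T \in \mT_h^\Gamma} \Vert \nabla u_h \Vert_{\thetah(\hat T)}^2$. Because the intersection of a cut element with $\Omega_h$ may be arbitrarily small, each term on the right must be bounded by means of Lemma \ref{lem:gp}(b): along a path of neighbouring elements in $\mT_h^{\Gamma,+}$ of uniformly bounded length (by quasi-uniformity and the geometry being resolved for $h$ small) that connects $\hat T$ to an element carrying a fixed fraction of its volume inside $\Omegalin$, each step costs only the ghost-penalty facet energy $\sum_{l=1}^k h^{2l-1} \int_F (\spacejump{\partial_n^l u_h})^2 \, ds$, and on the terminal ``fat'' element a polynomial norm equivalence (again transported by $\thetah$) bounds $\Vert \nabla u_h \Vert^2$ over the whole element by $\Vert \nabla u_h \Vert^2$ over its part in $\Omega_h$. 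Summing (each interior element being responsible for only boundedly many cut elements) then gives $\sum_{\hat T \in \mT_h^\Gamma} \Vert \nabla u_h \Vert_{\thetah(\hat T)}^2 \lesssim \Vert \nabla u_h \Vert_{\Omega_h}^2 + J_h(u_h,u_h)$, which is the claim (here $\gamma_l > 0$ enters $C_{\mathrm{tr}}$).

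With this at hand, Young's inequality gives $2 \big| \int_{\Gamma_h} (\partial_n u_h) u_h \, ds \big| \leq \delta\, h \Vert \partial_n u_h \Vert_{\Gamma_h}^2 + \delta^{-1} h^{-1} \Vert u_h \Vert_{\Gamma_h}^2$, and inserting the claim into the first term,
\begin{equation*}
  B_h(u_h,u_h) \geq (1 - \delta C_{\mathrm{tr}}) \Vert \nabla u_h \Vert_{\Omega_h}^2 + (\lambda - \delta^{-1}) h^{-1} \Vert u_h \Vert_{\Gamma_h}^2 + (1 - \delta C_{\mathrm{tr}}) \Vert u_h \Vert_J^2 .
\end{equation*}
Choosing $\delta = (2 C_{\mathrm{tr}})^{-1}$ and then $\lambda \geq 2 C_{\mathrm{tr}} + 1$ makes all three coefficients bounded below by a fixed positive constant, so $B_h(u_h,u_h) \gtrsim \Vert \nabla u_h \Vert_{\Omega_h}^2 + h^{-1} \Vert u_h \Vert_{\Gamma_h}^2 + \Vert u_h \Vert_J^2$. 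Finally, the only term of $\Vert u_h \Vert_A^2$ not yet recovered, $h \Vert \partial_n u_h \Vert_{\Gamma_h}^2$, is itself $\lesssim \Vert \nabla u_h \Vert_{\Omega_h}^2 + \Vert u_h \Vert_J^2 \lesssim B_h(u_h,u_h)$ by the claim, which yields $B_h(u_h,u_h) \gtrsim \Vert u_h \Vert_A^2 + \Vert u_h \Vert_J^2$. The main obstacle is the ghost-penalty trace inverse inequality: one has to combine the cut-position-independent trace and inverse inequalities on the \emph{curved} cut elements — transferred from the reference simplex through the near-identity map $\thetah$, where Lemmas \ref{lem:invtrafo} and \ref{lemhighder} keep the constants uniform for the isoparametric, non-polynomial functions — with the chaining argument across ghost-penalty facets based on Lemma \ref{lem:gp}, ensuring that chain lengths and hence all constants stay independent of $h$ and of the cut position.
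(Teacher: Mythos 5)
Your proposal is correct and takes essentially the same route as the paper: a cut-position-independent trace inverse inequality on the curved cut elements (pulled back through $\Theta_h$, as in Lemma \ref{lemF}), bounded against the \emph{full} element gradient norm and then controlled by the ghost-penalty mechanism of Lemma \ref{lem:gp}, followed by Young's inequality, absorption, and a sufficiently large choice of $\lambda$, with the $h\Vert \partial_n u_h\Vert_{\Gamma_h}^2$ part of the $\Vert\cdot\Vert_A$-norm recovered at the end by the same estimate. The only difference is presentational: you spell out the facet-chaining argument behind the bound $\Vert \nabla u_h\Vert_{\Omega_h}^2 + \Vert u_h\Vert_J^2 \gtrsim \Vert \nabla u_h\Vert_{\Theta_h(\Omega^\mT)}^2$, which the paper imports as the standard ghost-penalty result (its estimate \eqref{gpa}, citing the literature and Lemma \ref{lem:gp} for the curved-element adaptation).
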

\begin{proof}
  Let $\Gamma_{h,T} := \Gamma_h \cap \Theta_h(T)$, $T\in \mT_h^\Gamma$ and $\hat{u}_h := u_h \circ \Theta_h$. Cauchy-Schwarz and Young's inequality yield
  \begin{align*}
    2 \int_{\Gamma_{h,T}} u_h \partial_n u_h \, ds & \leq \frac{h}{\gamma} \Vert \partial_n u_h
\Vert_{\Gamma_{h,T}}^2 + \frac{\gamma}{h} \Vert u_h \Vert_{\Gamma_{h,T}}^2,
  \end{align*}
  for a $\gamma > 0$.
  For the former part we transform the integral to $\Gammalin_T := \Gammalin \cap T$, exploiting Lemma \ref{lemF}, where we apply an inverse trace inequality (with respect to a planar cut configuration) and transform the integral back to $\Theta_h(T)$:
  \[
  h  \Vert \partial_n u_h \Vert_{\Gamma_{h,T}}^2 \lesssim
  h  \Vert \nabla \hat{u}_h  \Vert_{\Gammalin_{T}}^2 \lesssim \Vert \nabla \hat{u}_h \Vert_{T}^2 \lesssim
  \Vert \nabla u_h \Vert_{\Theta_h(T)}^2.
  \]
  Hence, there is a constant $c_{tr}>0$ only depending on the shape regularity of the mesh, so that 
  \begin{equation}\label{ctr}
    2 \int_{\Gamma_{h,T}} u_h \partial_n u_h \, ds \leq \frac{c_{tr}}{\gamma}  \Vert \nabla u_h \Vert_{\Theta_h(T)}^2 + \frac{\gamma}{h} \Vert u_h \Vert_{\Gamma_{h,T}}^2.
  \end{equation}
  We note that we bounded the boundary integral term using the full element $\Theta_h(T)$ and not only the part in $\Omega_h$.
  Here, we need the ghost penalty term to relate this to the parts in $\Omega_h$. For this, we use the fundamental result for the ghost penalty method which
  has been proven for instance in \cite{MassingLarsonLoggEtAl2013a} for uncurved meshes. With Lemma \ref{lem:gp} the result holds true also in the case of curved elements, i.e. there holds
  \begin{equation}\label{gpa}
    \Vert \nabla v_h^2 \Vert_{\Omega_h} + J_h(v_h,v_h) \gtrsim \Vert \nabla v_h \Vert_{\Theta_h(\Omega^\mT)}^2, \quad \forall v_h \in \mathcal{V}_h.
  \end{equation}
  With \eqref{ctr} and \eqref{gpa}, there exists $\gamma>0$ (independent of $h$ and the cut position) such that there holds
  \begin{equation*}
   2 N_h^c(u_h,u_h) \leq \frac12 a(u_h,u_h) + \frac12 J(u_h,u_h) + \frac{\gamma}{h} \Vert u_h \Vert_{\Gamma_h}^2.
  \end{equation*}
  With $\lambda \geq 2 \gamma$ we have
  \begin{align*}
    B_h(u_h,u_h) & \geq \frac12 \left( a(u_h,u_h) + J(u_h,u_h) + N_h^s(u_h,u_h) \right).
  \end{align*}
  Finally the claim follows from the trace inverse inequality, Lemma \ref{lemF} and \eqref{gpa}:
  \begin{align*}
   \Vert \partial_n u_h \Vert_{-\frac12,h,\Gamma_h}^2 \lesssim  h \Vert \partial_n \hat{u}_h \Vert_{\Gammalin}^2 \lesssim \Vert \nabla \hat{u}_h \Vert_{\Omega^\mT}^2 \lesssim \Vert \nabla u_h \Vert_{\Theta_h(\Omega^\mT)}^2 \lesssim a(u_h,u_h) + J_h(u_h,u_h).
  \end{align*}
\end{proof}
As a direct consequence of Lemma \ref{lem:coercivity} we know that \eqref{eq:Nitsche1} has a unique solution in $\mV_h$.
We note that $\lambda$ has to be chosen ``sufficiently large'', $\lambda > \lambda_0$, where $\lambda_0$ depends on the shape regularity of the background mesh $\widetilde{\mathcal{T}}_h$ and the ghost penalty stabilization parameters $\gamma_l$.
\subsection{Continuity}\label{sec:conti}
\begin{lemma} \label{lem:cont}
  There holds
  \begin{subequations}
  \begin{align}
    A_h(u,v) \lesssim \Vert u \Vert_A \Vert v \Vert_A \quad & \text{for all } u,v \in \mathcal{V}_h + \Vreg, \\
    J_h(u,v) \lesssim \Vert u \Vert_J \Vert v \Vert_J \quad & \text{for all } u,v \in \mathcal{V}_h.
  \end{align}
  \end{subequations}
\end{lemma}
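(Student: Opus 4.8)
The plan is to derive both inequalities by term-by-term applications of the Cauchy--Schwarz inequality, reading off the correct pairings directly from the definitions of the norms $\Vert \cdot \Vert_A$ and $\Vert \cdot \Vert_J$; no geometric estimates or (trace) inverse inequalities are needed at this point. I would start with the statement for $J_h$, which is the easy case: $J_h$ is a symmetric, positive semi-definite bilinear form on $\mathcal{V}_h$, being the sum over the levels $l=1,\dots,k$ and the facets $F\in\mathcal{F}_h$ of the nonnegatively weighted terms $\gamma_l h^{2l-1}\int_{\Theta_h(F)}\spacejump{\partial_n^l u}\spacejump{\partial_n^l v}\,ds$. Hence it induces a semi-inner product whose seminorm is exactly $\Vert\cdot\Vert_J$, and the Cauchy--Schwarz inequality for this semi-inner product gives $J_h(u,v)\le\Vert u\Vert_J\Vert v\Vert_J$ immediately (in fact with constant one). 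Concretely one bounds each facetwise integral by $\Vert\spacejump{\partial_n^l u}\Vert_{\Theta_h(F)}\Vert\spacejump{\partial_n^l v}\Vert_{\Theta_h(F)}$ and then applies the discrete Cauchy--Schwarz inequality over the finite index set $\{(l,F)\}$.

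For $A_h$ I would split $A_h(u,v)=a_h(u,v)+N_h^c(u,v)+N_h^c(v,u)+N_h^s(u,v)$ and estimate the four contributions separately. The volume term is immediate: $a_h(u,v)\le\Vert\nabla u\Vert_{\Omega_h}\Vert\nabla v\Vert_{\Omega_h}=a_h(u,u)^{1/2}a_h(v,v)^{1/2}\le\Vert u\Vert_A\Vert v\Vert_A$. For the consistency term I pair the normal flux of $u$ with the trace of $v$ using the two complementary scaled boundary norms that are built into $\Vert\cdot\Vert_A$:
\[
N_h^c(u,v)\le\Vert\partial_n u\Vert_{\Gamma_h}\Vert v\Vert_{\Gamma_h}
=\big(h^{1/2}\Vert\partial_n u\Vert_{\Gamma_h}\big)\big(h^{-1/2}\Vert v\Vert_{\Gamma_h}\big)
=\Vert\partial_n u\Vert_{-\frac12,h,\Gamma_h}\,\Vert v\Vert_{\frac12,h,\Gamma_h}
\le\Vert u\Vert_A\Vert v\Vert_A,
\]
and symmetrically $N_h^c(v,u)\le\Vert u\Vert_A\Vert v\Vert_A$; for the penalty term $N_h^s(u,v)=\tfrac{\lambda}{h}\int_{\Gamma_h}uv\,ds\le\lambda\big(h^{-1/2}\Vert u\Vert_{\Gamma_h}\big)\big(h^{-1/2}\Vert v\Vert_{\Gamma_h}\big)=\lambda\,\Vert u\Vert_{\frac12,h,\Gamma_h}\Vert v\Vert_{\frac12,h,\Gamma_h}\le\lambda\,\Vert u\Vert_A\Vert v\Vert_A$. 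Adding the four bounds gives $A_h(u,v)\lesssim\Vert u\Vert_A\Vert v\Vert_A$ with a constant depending only on the fixed Nitsche parameter $\lambda$, hence independent of $h$ and of the cut position.

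The only point that needs a word of care -- and the one I regard as the (minor) main obstacle -- is the \emph{well-definedness} of the boundary terms for arguments in $\mathcal{V}_h+\Vreg$: the Cauchy--Schwarz steps above are only legitimate once $\operatorname{tr}_{\Gamma_h}v\in L^2(\Gamma_h)$ and $\operatorname{tr}_{\Gamma_h}\partial_n v\in L^2(\Gamma_h)$, so that $\Vert v\Vert_{\frac12,h,\Gamma_h}$ and $\Vert\partial_n v\Vert_{-\frac12,h,\Gamma_h}$ are finite. For $v\in\mathcal{V}_h$ this is clear since $v$ is elementwise smooth. For the regular part $\Vreg=H^2(\Omega_h)\oplus\{u\circ\Phi_h\mid u\in H^2(\Omega)\}$ it follows from the piecewise trace theorem over the cut elements $\mathcal{T}_h^\Gamma$ (recall $\Gamma_h$ intersects each such element in a smooth hypersurface) together with the $C^{k+1}$-smoothness of $\Phi_h$ on each element of $\Theta_h(\widetilde{\mathcal{T}}_h)$ from Lemma \ref{lem:phih}, which ensures that the $L^2$-trace property of the gradient is preserved under composition with the piecewise smooth homeomorphism $\Phi_h$ mapping $\Gamma_h$ onto $\Gamma$. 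Since no $h$- or cut-position-dependent constants enter any of the above, the estimates hold uniformly as claimed.
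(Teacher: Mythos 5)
Your proposal is correct and follows essentially the same route as the paper, whose proof is exactly ``definition of the norms plus Cauchy--Schwarz''; your term-by-term pairing of $N_h^c$ and $N_h^s$ with the scaled boundary norms, and the semi-inner-product argument for $J_h$, is the intended argument. The extra remark on well-definedness of the traces for $\Vreg$ is fine but not needed, since $\Vreg$ was defined precisely so that $\partial_n v|_{\Gamma_h}\in L^2(\Gamma_h)$.
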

\begin{proof} Follows from the definition of the norms and Cauchy-Schwarz inequalities.
\end{proof}
We note that we will only require continuity of the ghost penalty bilinear form on the discrete (finite element) space.
\vspace*{-0.5cm}
\subsection{Strang lemma} \label{sectStrang}

\begin{lemma}\label{lem:strang}
Let $u\in H^1(\Omega)$ be the solution of \eqref{eq:ellmodel} and $u_h \in \mathcal{V}_{h}$ the solution of \eqref{eq:Nitsche1}. The following holds: \vspace*{-0.1cm}
\begin{subequations}\label{eq:strang}
\begin{align}
\Vert u \circ \Phi_h - u_h \Vert_A \lesssim &  \inf_{v_h \in \mathcal{V}_h} \left(  \Vert u\circ \Phi_h - v_h \Vert_A + \Vert  v_h \Vert_J  \right) \label{eq:str1}\\ 
 & + \sup_{w_h \in \mathcal{V}_h} \frac{ | f_h^1(w_h) - f(w_h \circ \Phi_h^{-1}) | }{\Vert w_h \Vert_A} \label{eq:str2}\\
 & + \sup_{w_h \in \mathcal{V}_h} \frac{ | A_h^1(u\circ \Phi_h,w_h) - A(u,w_h \circ \Phi_h^{-1}) | }{\Vert w_h \Vert_A} \label{eq:str3}\\
 & + \Vert u \circ \Phi_h - u_D^e \Vert_{\frac12,h,\Gamma_h}. \label{eq:str4} 
\end{align}
\end{subequations}
\end{lemma}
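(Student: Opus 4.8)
The statement is a Strang-type lemma, so the plan is to follow the standard three-ingredient recipe: coercivity (Lemma~\ref{lem:coercivity}), continuity (Lemma~\ref{lem:cont}), and Galerkin orthogonality with respect to the \emph{exact} geometry (Lemma~\ref{lem:galorth}), combined with a triangle-inequality split between the exact solution (mapped by $\Phi_h$) and an arbitrary discrete function $v_h \in \mathcal{V}_h$. Write $u^\ell := u \circ \Phi_h$. First I would insert $v_h$ via the triangle inequality, $\Vert u^\ell - u_h \Vert_A \le \Vert u^\ell - v_h \Vert_A + \Vert v_h - u_h \Vert_A$, so it remains to estimate $e_h := v_h - u_h \in \mathcal{V}_h$.

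\textbf{Estimating $e_h$ by coercivity.} By Lemma~\ref{lem:coercivity}, $\Vert e_h \Vert_A^2 + \Vert e_h \Vert_J^2 \lesssim B_h(e_h,e_h) = A_h(v_h - u_h, e_h) + J_h(v_h - u_h, e_h)$. Since $u_h$ solves \eqref{eq:Nitsche1}, $B_h(u_h, e_h) = f_h(e_h) = f_h^1(e_h) + f_h^2(e_h)$, so $B_h(v_h - u_h, e_h) = A_h(v_h,e_h) + J_h(v_h,e_h) - f_h^1(e_h) - f_h^2(e_h)$. The next step is the key manipulation: I add and subtract the ``exact-geometry'' forms $A(u, e_h \circ \Phi_h^{-1})$ and $f(e_h \circ \Phi_h^{-1})$, which are equal by Lemma~\ref{lem:galorth}, and I also add and subtract $A_h^1(u^\ell, e_h)$ and $f_h^1(e_h)$ appropriately. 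Using the splitting $A_h = A_h^1 + A_h^2$, $f_h = f_h^1 + f_h^2$ from \eqref{eq:split1}--\eqref{eq:split2}, this reorganizes $B_h(v_h - u_h, e_h)$ into four groups: (i) $A_h^1(v_h - u^\ell, e_h) + A_h^2(v_h, e_h) - f_h^2(e_h) + J_h(v_h, e_h)$, which is a continuity term in $v_h - u^\ell$ plus the symmetry/boundary contributions; (ii) the volume consistency term $f_h^1(e_h) - f(e_h \circ \Phi_h^{-1})$, matching \eqref{eq:str2}; (iii) the bilinear consistency term $A_h^1(u^\ell, e_h) - A(u, e_h \circ \Phi_h^{-1})$, matching \eqref{eq:str3}; and (iv) $J_h(v_h, e_h)$ handled by Cauchy--Schwarz as $\Vert v_h \Vert_J \Vert e_h \Vert_J$.

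\textbf{Closing the estimate.} For group (i), continuity of $A_h$ (Lemma~\ref{lem:cont}) bounds $A_h^1(v_h - u^\ell, e_h) \lesssim \Vert v_h - u^\ell \Vert_A \Vert e_h \Vert_A$; the remaining $A_h^2$ and $f_h^2$ terms recombine (using $N_h^c(e_h, u^\ell - u_D^e) + N_h^s(u^\ell - u_D^e, e_h)$ after noting $u^\ell|_{\Gamma_h}$ need not equal $u_D^e|_{\Gamma_h}$) into a term controlled by $\Vert u^\ell - u_D^e \Vert_{\frac12,h,\Gamma_h} \Vert e_h \Vert_A$, which is \eqref{eq:str4}; here one uses the norm definitions \eqref{eq:norms} and Cauchy--Schwarz on $\Gamma_h$ to bound $N_h^c(e_h,\cdot)$ via $\Vert \partial_n e_h \Vert_{-\frac12,h,\Gamma_h} \le \Vert e_h \Vert_A$. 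Dividing through by $(\Vert e_h \Vert_A^2 + \Vert e_h \Vert_J^2)^{1/2}$, taking the supremum over $w_h = e_h \in \mathcal{V}_h$ in the consistency terms, absorbing $\Vert e_h \Vert_J$ on the left via the coercivity bound, and finally taking the infimum over $v_h$ yields \eqref{eq:strang}. The term $\Vert v_h \Vert_J$ appears in \eqref{eq:str1} because $\Vert u^\ell \Vert_J$ is not defined (the ghost-penalty jumps of the non-polynomial $u^\ell$ need not vanish), so $J_h(v_h, e_h)$ cannot be converted into an approximation term in $u^\ell - v_h$ and must be kept as $\Vert v_h \Vert_J$.

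\textbf{Main obstacle.} The bookkeeping in the add-and-subtract step is the delicate part: one must route every term either into a continuity estimate against $\Vert e_h \Vert_A$ or $\Vert e_h \Vert_J$, into one of the four supremum/infimum quantities on the right-hand side, or into a Galerkin-orthogonality cancellation — and in particular the boundary terms from $A_h^2$, $f_h^2$ must be combined carefully so that only the geometric boundary defect $u^\ell - u_D^e$ survives (rather than $u^\ell$ itself, which would not be small). The analytic inequalities are all routine given Lemmas~\ref{lem:coercivity}, \ref{lem:cont}, \ref{lem:galorth}; no new estimate is needed here.
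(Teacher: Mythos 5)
Your proposal is correct and follows essentially the same route as the paper: triangle inequality, coercivity of $B_h$ applied to $e_h = v_h - u_h$, the discrete equation plus Galerkin orthogonality (Lemma~\ref{lem:galorth}) to insert the exact-geometry forms, continuity and Cauchy--Schwarz for the $A_h$- and $J_h$-terms, and the identification of $A_h^2 - f_h^2$ with the boundary defect $\int_{\Gamma_h}(-\partial_n e_h + \tfrac{\lambda}{h}e_h)(u\circ\Phi_h - u_D^e)\,ds$ giving \eqref{eq:str4}. The only (harmless) cosmetic difference is that you apply the splitting $A_h = A_h^1 + A_h^2$ before extracting the approximation term, so your $A_h^2(v_h,e_h)-f_h^2(e_h)$ needs the extra insertion of $\pm u\circ\Phi_h$ (the $v_h - u\circ\Phi_h$ piece folding into continuity), whereas the paper bounds $A_h(\tilde u - v_h, w_h)$ with the full form first and splits only the consistency term.
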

\begin{proof}
  The proof is similar to the proof of Lemma 5.12 in \cite{LR16a} except for the ghost penalty part and the treatment of $A_h^2$ and $f_h^2$. The concept is along the same lines as the well-known Strang Lemma. We use the notation $\tilde u =u \circ \Phi_h$ and start with the triangle inequality with an arbitrary $v_h \in \mathcal{V}_h$:
\begin{equation*}
\Vert \tilde u - u_h \Vert_A \leq \Vert \tilde u - v_h \Vert_A  + \Vert v_h - u_h \Vert_A.
\end{equation*}
With $\mathcal{V}_h$-coercivity, cf. Lemma \ref{lem:coercivity}, we have ($w_h:=u_h-v_h$)
\begin{equation*} 
\begin{split}
  \Vert w_h \Vert_A^2  + \Vert w_h \Vert_J^2 & \lesssim B_h(u_h - v_h,w_h) = 
f_h(w_h) - A_h(v_h,w_h) - J_h(v_h,w_h) \\
& \lesssim \left| A_h(\tilde u-v_h,w_h) \right| + \left| A_h(\tilde u,w_h) - f_h(w_h) \right| + \Vert v_h \Vert_J \Vert w_h \Vert_J.
\end{split}
\end{equation*}
Using continuity, cf. Lemma \ref{lem:cont}, and dividing by $\Vert w_h \Vert_B$ results in
\begin{equation*}
\Vert \tilde u - u_h \Vert_A \lesssim \inf_{v_h \in \mathcal{V}_h} \left( \Vert \tilde u - v_h \Vert_A + \Vert v_h \Vert_J \right) + \sup_{w_h \in \mathcal{V}_h} \frac{ | A_h(\tilde u,w_h) - f_h(w_h) | }{\Vert w_h \Vert_A}.
\end{equation*}
Using the consistency property of Lemma~\ref{lem:galorth} and the splitting of $A_h$ and $f_h$ in \eqref{eq:split1} and \eqref{eq:split2} yields \vspace*{-0.6cm}
\begin{align*}
& |A_h(\tilde u,w_h) - f_h( w_h)| 
 = |A_h(\tilde u,w_h) - f_h( w_h) - \overbrace{\left(A(u,w_h \circ \Phi_h^{-1}) - f(w_h\circ \Phi_h^{-1})\right)}^{=0}| \\ 
& \leq |A_h^1( \tilde u, w_h) - A(u, w_h \circ \Phi_h^{-1})| + |f_h^1( w_h) - f(w_h \circ \Phi_h^{-1})|  + |A_h^2( \tilde u, w_h) - f_h^2( w_h)|.
\end{align*}
Dividing by $\Vert w_h \Vert_A$ and using \vspace*{-0.1cm}
$$
|A_h^2( \tilde u, w_h) - f_h^2( w_h)| = \left| \int_{\Gamma_h} (-\partial_n w_h + \frac{\lambda}{h} w_h) ( \tilde u - u_D^e) \, ds \right| \lesssim \Vert w_h \Vert_A \Vert \tilde u - u_D^e \Vert_{\frac12,h,\Gamma_h}
$$
for the latter part completes the proof.
\end{proof}
% \newpage

\subsection{Consistency error bounds} \label{sec:integrals} 
We derive consistency error bounds for the right-hand side terms \eqref{eq:str2}-\eqref{eq:str4} in the Strang estimate.
\begin{lemma} \label{lemconsist}
  Let $u \in H^2(\Omega)$ be a solution of \eqref{eq:ellmodel}. We assume that $f \in H^{1,\infty}(\Omega)$, $u_D \in H^{1,\infty}(\Gamma)$ and the data extensions $f^e$ and $u_D^e$ (cf. section \ref{sec:isoparammethod}) satisfy $\|f^e\|_{1,\infty,\Omega^e} \lesssim \|f\|_{1,\infty,\Omega}$ 
and $\|u_D^e\|_{1,\infty,\Omega^e} \lesssim \|u_D\|_{1,\infty,\Gamma}$. Then, the following holds for $w_h \in \mathcal{V}_h$:
\begin{subequations}
\begin{align}
 |A(u,w_h\circ \Phi_h^{-1}) - A_h(u\circ \Phi_h,w_h)| & \lesssim h^k \Vert u \Vert_{H^2(\Omega)} \Vert w_h \Vert_A, \label{p1}
\\
  |f(w_h\circ \Phi_h^{-1})-f_h(w_h)| & \lesssim h^{k} \|f\|_{1,\infty,\Omega}  \| w_h \Vert_A, \label{p2} \\
  \Vert u \circ \Phi_h - u_D^e \Vert_{\frac12,h,\Gamma_h}  & \lesssim h^{k+\frac12} \|u_D\|_{1,\infty,\Gamma}.  \label{p3}
\end{align}
\end{subequations}
\end{lemma}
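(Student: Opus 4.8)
The plan is to estimate each of the three consistency terms \eqref{p1}--\eqref{p3} by transforming every integral over the discrete geometry ($\Omega_h$, $\Gamma_h$) back to the exact geometry ($\Omega$, $\Gamma$) via the homeomorphism $\Phi_h$ from Lemma~\ref{lem:phih}, and exploiting that $\Phi_h$ is an $\mathcal{O}(h^{k+1})$ perturbation of the identity, cf.~\eqref{eq:estphih}, while $\Phi_h^{-1}$ shares the same property and Lemma~\ref{lem:invtrafo} controls its derivatives. The key general principle is that when one pulls back a bilinear form $\int_{\Omega_h} \nabla u \cdot \nabla v$ to $\int_\Omega$ via the change of variables $y = \Phi_h(x)$, the Jacobian factors and the transformed gradients differ from the untransformed ones by a matrix that is $I + \mathcal{O}(h^{k+1})$; the leading-order terms cancel against $A(u, w_h\circ\Phi_h^{-1})$ exactly, and the remainder is bounded by $h^{k+1}$ times $H^1$-type norms. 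A similar story holds for the surface integrals, where the change of variables introduces a Gram-determinant factor (ratio of surface measures) that is again $1 + \mathcal{O}(h^{k+1})$, plus a rotation of the normal vector $\partial_n u$ that is likewise an $\mathcal{O}(h^{k+1})$ perturbation. I would organize the computation around a lemma (or simply inline estimates) that quantifies these transformation-induced deviations: let $J_h = \det(D\Phi_h^{-1})$, $A_h = (D\Phi_h^{-1})(D\Phi_h^{-1})^T J_h$, and the surface-measure ratio $\mu_h$; then $\|1 - J_h\|_\infty, \|I - A_h\|_\infty, \|1 - \mu_h\|_\infty \lesssim h^{k+1}$ and the normal deviation $|n_{\Gamma_h}\circ\Phi_h^{-1} - n_\Gamma| \lesssim h^{k+1}$, all following from \eqref{eq:estphih} and Lemma~\ref{lem:invtrafo}.

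For \eqref{p1} concretely: writing $\tilde u = u\circ\Phi_h$ and $\tilde w_h = w_h\circ\Phi_h^{-1}$, a change of variables gives $a_h(\tilde u, w_h) = \int_\Omega A_h \nabla u \cdot \nabla \tilde w_h \, dx$, so $|a_h(\tilde u,w_h) - \int_\Omega \nabla u\cdot\nabla\tilde w_h| \le \|I - A_h\|_\infty \|\nabla u\|_{\Omega}\|\nabla\tilde w_h\|_\Omega \lesssim h^{k+1}\|u\|_{H^2(\Omega)}\|w_h\|_A$, using that $\|\nabla\tilde w_h\|_\Omega \lesssim \|\nabla w_h\|_{\Omega_h} \lesssim \|w_h\|_A$ (norm equivalence under $\Phi_h$). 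The Nitsche consistency term $N_h^c(\tilde u, w_h) = \int_{\Gamma_h}(-\partial_n \tilde u) w_h\, ds$ is handled analogously: transform to $\Gamma$, pick up the factor $\mu_h$ and the rotated normal, and bound the difference from $\int_{\partial\Omega}(-\partial_n u)\tilde w_h\, ds$ by $h^{k+1}$ times $\|\nabla u\|_{\Gamma}\|\tilde w_h\|_\Gamma$; here one uses a trace estimate $\|\nabla u\|_{L^2(\Gamma)} \lesssim \|u\|_{H^2(\Omega)}$ and $\|w_h\|_{\Gamma_h} \le h^{1/2}\|w_h\|_{\frac12,h,\Gamma_h} \le h^{1/2}\|w_h\|_A$, which yields an extra half power of $h$ and is therefore harmless — the bound is even $h^{k+1/2}$ for this piece, so the overall rate $h^k$ (or better, $h^{k+1}$-ish modulo the stated $h^k$) is governed by the volume term. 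For \eqref{p2}, $f_h^1(w_h) = \int_{\Omega_h} f^e w_h\, dx$ transforms to $\int_\Omega f^e\circ\Phi_h\, J_h\, \tilde w_h\, dx$; subtracting $f(\tilde w_h) = \int_\Omega f\, \tilde w_h\, dx$ and using $f^e\circ\Phi_h - f = (f^e\circ\Phi_h - f^e) + (f^e - f)$, where the first term is $\lesssim \|\nabla f^e\|_\infty \|\id - \Phi_h\|_\infty \lesssim h^{k+1}\|f\|_{1,\infty,\Omega}$ by the mean value theorem and $f^e = f$ on $\Omega$, while $\|1-J_h\|_\infty$ contributes another $h^{k+1}$; the $N_h^c, N_h^s$ parts of $f_h$ against $u_D^e$ pair up with the corresponding parts of $A_h^2$ when one traces through the full Strang term, but treating them directly one again gets $h^{k+1/2}$ from surface scaling. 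Finally \eqref{p3}: $\|u\circ\Phi_h - u_D^e\|_{\frac12,h,\Gamma_h}^2 = h^{-1}\|u\circ\Phi_h - u_D^e\|_{\Gamma_h}^2$, and on $\Gamma_h$ one writes $u\circ\Phi_h - u_D^e = (u\circ\Phi_h - u_D\circ\Phi_h) + (u_D\circ\Phi_h - u_D^e)$; since $u = u_D$ on $\Gamma = \Phi_h(\Gamma_h)$ the first difference vanishes pointwise on $\Gamma_h$, and the second is bounded pointwise by $\|\nabla u_D^e\|_\infty|\Phi_h(x) - x| \lesssim h^{k+1}\|u_D\|_{1,\infty,\Gamma}$ using $u_D^e = u_D$ on $\Gamma$ and $\Phi_h(x)\in\Gamma$ for $x\in\Gamma_h$; integrating over the bounded surface $\Gamma_h$ and dividing by $h$ gives $\lesssim h^{2k+2-1} = h^{2k+1}$, i.e.\ the $\frac12,h$-norm itself is $\lesssim h^{k+1/2}$, slightly better than claimed.

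The main obstacle — the part requiring the most care rather than routine estimation — is the rigorous verification of the transformation bounds for the \emph{surface} integrals and the \emph{normal vector}: one must express $n_{\Gamma_h}$ in terms of the level set / deformation data, relate $n_{\Gamma_h}\circ\Phi_h^{-1}$ to $n_\Gamma$, and show the discrepancy is $\mathcal{O}(h^{k+1})$, as well as control the Gram-determinant ratio $\mu_h$ of the surface pullback; this uses the cofactor formula $n_{\Gamma_h} = (D\Phi_h)^T n_\Gamma / |(D\Phi_h)^T n_\Gamma|$ together with \eqref{eq:estphih}. A secondary subtlety is the consistent pairing of the $A_h^2$ and $f_h^2$ contributions: rather than bounding $N_h^c(w_h, u_D^e)$ in isolation, one notes (as already done inside the Strang lemma) that these combine into $\int_{\Gamma_h}(-\partial_n w_h + \frac\lambda h w_h)(u_D^e - u\circ\Phi_h)\, ds$, so that \eqref{p2} in the statement really means the $f_h^1$-term against the volume part only, and the boundary-data mismatch is fully absorbed by \eqref{p3} times $\|w_h\|_A$. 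Once the transformation lemma is in place, every estimate reduces to Cauchy--Schwarz, the mean value theorem, trace inequalities $H^2(\Omega)\hookrightarrow H^1(\partial\Omega)$, and the norm equivalences of Lemma~\ref{lemF} and Lemma~\ref{lem:phih}.
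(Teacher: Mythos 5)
Your overall strategy is essentially the paper's: for \eqref{p1} and \eqref{p2} the paper simply invokes the geometric-perturbation (Strang-type) estimates of Lemma 5.13 in \cite{LR16a}, which is exactly the pull-back-via-$\Phi_h$ computation you spell out, and your proof of \eqref{p3} --- splitting $u\circ\Phi_h-u_D^e$ using $u=u_D=u_D^e$ on $\Gamma=\Phi_h(\Gamma_h)$ and the pointwise bound $|u_D^e\circ\Phi_h-u_D^e|\lesssim \Vert\Phi_h-\id\Vert_\infty\Vert u_D^e\Vert_{1,\infty,\Omega^e}$ --- is literally the argument in the paper. Your reading of \eqref{p1}--\eqref{p2} as statements about $A_h^1$ and $f_h^1$, with the boundary-data mismatch absorbed into \eqref{p3} through the Strang lemma, also matches the splitting \eqref{eq:split1}--\eqref{eq:split2}.

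One quantitative claim in your write-up is wrong, although it does not sink the proof because the lemma only asserts the rate $h^k$: you state that $\Vert I-A_h\Vert_\infty$, $\Vert 1-J_h\Vert_\infty$, $\Vert 1-\mu_h\Vert_\infty$ and the normal deviation are all $\mathcal{O}(h^{k+1})$. Estimate \eqref{eq:estphih} gives $\Vert\id-\Phi_h\Vert_\infty\lesssim h^{k+1}$ but only $\Vert I-D\Phi_h\Vert_\infty\lesssim h^{k}$; consequently every quantity built from first derivatives of $\Phi_h$ --- the Jacobian determinant, the transformed diffusion matrix, the surface-measure ratio and the rotated normal --- is merely an $\mathcal{O}(h^{k})$ perturbation of the identity. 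This is precisely why \eqref{p1} and \eqref{p2} carry the rate $h^{k}$ rather than $h^{k+1}$: the volume terms are limited by the $\mathcal{O}(h^{k})$ Jacobian/gradient perturbation, while only terms driven by the zeroth-order perturbation $\Vert\id-\Phi_h\Vert_\infty$ (the data shift $f^e\circ\Phi_h-f^e$ and the boundary term \eqref{p3}) gain the extra power of $h$. With the intermediate rates corrected from $h^{k+1}$ to $h^{k}$, your estimates deliver exactly the stated bounds, and your cofactor-formula treatment of the normal and surface measure is the standard way to make the surface transformation rigorous.
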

\begin{proof}
  The proofs of \eqref{p1} and \eqref{p2} follow the same lines as the proof of Lemma 5.13 in \cite{LR16a}.
  To obtain the bound \eqref{p3} we note that $u = u_D = u_D^e$ on $\Gamma$ so that with Lemma \ref{lem:phih} we get
  $$
  \Vert u \circ \Phi_h - u_D^e \Vert_{\frac12,h,\Gamma_h} \lesssim h^{-\frac12} \Vert \Phi_h - \id \Vert_{\infty,\Gamma_h} \Vert u_D^e \Vert_{1,\infty,\Omega^e}\lesssim h^{k+\frac12} \Vert u_D \Vert_{1,\infty,\Gamma}.
  $$
\end{proof}

\subsection{Approximation errors} \label{sec:approx}

We obtained reasonable bounds for the geometrical consistency errors. We now treat the approximability of solutions with $\mV_h$ in the $\Vert \cdot \Vert_B$-norm.
Due to the fact that the lift $u \circ \Phi_h$ is not (globally) smooth (higher derivatives will in general be discontinuous across curved facets) we introduce a (globally) smooth quantity  for the approximation with finite element functions. We note that this becomes necessary, in contrast to the analysis in \cite{LR16a}, because of the higher order jump terms in the ghost penalty.

\begin{lemma} \label{lemuext}
For $u \in H^{3,\infty}(\Omega)$ or $u \in H^{k+1}(\Omega)$, we define $u^e := E u$ where $E : H^{3,\infty}(\Omega) \rightarrow H^{3,\infty}(\widetilde\Omega)$ for $k=2$ and $E : H^{k+1}(\Omega) \rightarrow H^{k+1}(\widetilde\Omega)$ for $k\geq 3$ is a continuous extension operator as in \cite[Theorem II.3.3]{GaldibookNS}. Then there holds
\begin{equation} \label{io}
\Vert u\circ \Phi_h - u^e \Vert_A  \lesssim h^{k+\frac12} S(u),~\text{ with } S(u):= \begin{cases}
  \|u\|_{H^{3,\infty}(\Omega)} &~~\text{if}~~k=2, \\                                                                                
    \|u\|_{H^{k+1}(\Omega)} & ~~\text{if}~~k \geq 3. \end{cases}
\end{equation}
\end{lemma}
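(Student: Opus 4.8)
The quantity $\Vert u\circ\Phi_h - u^e\Vert_A$ decomposes, by definition of the $\Vert\cdot\Vert_A$-norm, into the volume gradient term $\Vert\nabla(u\circ\Phi_h - u^e)\Vert_{\Omega_h}$ and the two boundary terms $\Vert u\circ\Phi_h - u^e\Vert_{\frac12,h,\Gamma_h}$ and $\Vert\partial_n(u\circ\Phi_h-u^e)\Vert_{-\frac12,h,\Gamma_h}$. The key observation that makes all three tractable is that $w := u\circ\Phi_h - u^e$ vanishes on $\Gamma$ (since $\Phi_h(\Gamma)=\Gamma$ is not quite what we want — rather $\Phi_h$ maps $\Gamma_h$ to $\Gamma$, but on $\Gamma$ itself $u\circ\Phi_h$ and $u^e=Eu$ agree because... ) — more carefully, the difference $u\circ\Phi_h - u^e$ is precisely the discrepancy between two smooth extensions of $u$ off $\Gamma$, one via the (near-identity) pull-back $\Phi_h$ and one via the Sobolev extension operator $E$, and both restrict to $u$ on $\Gamma$. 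So I would first write $w = u^e\circ\Phi_h - u^e + (u - u^e)\circ\Phi_h$, but since $u^e = u$ on $\Omega$ this is just $w = u^e\circ\Phi_h - u^e$ on $\Omega_h\cap\Omega$, and in general $w = u^e\circ\Phi_h - u^e$ on all of $\Omega_h$ once we use the extension $u^e$ consistently in place of $u$ inside the composition. The plan is therefore to estimate $u^e\circ\Phi_h - u^e$ using the smallness of $\Phi_h-\id$ from Lemma~\ref{lem:phih}.

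\textbf{Step 1 (volume term).} On $\Omega_h$, by the chain rule $\nabla(u^e\circ\Phi_h) = (D\Phi_h)^T (\nabla u^e)\circ\Phi_h$, so
$\nabla(u^e\circ\Phi_h) - \nabla u^e = (D\Phi_h - I)^T(\nabla u^e)\circ\Phi_h + \big((\nabla u^e)\circ\Phi_h - \nabla u^e\big)$.
The first summand is bounded in $L^2(\Omega_h)$ by $\Vert D\Phi_h - I\Vert_{\infty,\widetilde\Omega}\,\Vert\nabla u^e\Vert_{\Omega} \lesssim h^{k+1}\Vert u^e\Vert_{H^1}$ using \eqref{eq:estphih} and a change of variables (whose Jacobian is uniformly bounded, again by \eqref{eq:estphih}). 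For the second summand I would use a Taylor/fundamental-theorem-of-calculus argument: $(\nabla u^e)\circ\Phi_h - \nabla u^e = \int_0^1 D^2u^e\big(\id + t(\Phi_h-\id)\big)(\Phi_h-\id)\,dt$, giving an $L^2$ bound $\lesssim \Vert\Phi_h-\id\Vert_{\infty,\widetilde\Omega}\,\Vert D^2 u^e\Vert_{\Omega}\lesssim h^{k+1}\Vert u^e\Vert_{H^2}$. This requires $u^e\in H^2$, which is why the extension operator targets $H^{3,\infty}$ (for $k=2$) or $H^{k+1}\subset H^3$ (for $k\geq3$). Both contributions give $h^{k+1}\lesssim h^{k+\frac12}$ times $S(u)$, controlling the volume part of $\Vert\cdot\Vert_A$.

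\textbf{Step 2 (boundary terms).} On $\Gamma_h$ we have $w = u^e\circ\Phi_h - u^e$, and since $\Phi_h$ maps $\Gamma_h$ onto $\Gamma$ where $u^e=u$, the trace satisfies $|w| = |u - u^e\circ\Phi_h^{-1}\circ\Phi_h|$... the cleaner route: $\Vert w\Vert_{\Gamma_h}\lesssim \Vert\Phi_h-\id\Vert_{\infty,\Gamma_h}\Vert\nabla u^e\Vert_{\infty}\cdot|\Gamma_h|^{1/2}\lesssim h^{k+1}S(u)$ by the same FTC argument along the segment from $x$ to $\Phi_h(x)$, so $\Vert w\Vert_{\frac12,h,\Gamma_h} = h^{-1/2}\Vert w\Vert_{\Gamma_h}\lesssim h^{k+1/2}S(u)$. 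For the conormal term, $\partial_n w$ on $\Gamma_h$ involves $\nabla(u^e\circ\Phi_h) - \nabla u^e$ dotted with the unit normal, which by Step 1's pointwise estimate is $\lesssim \Vert D\Phi_h-I\Vert_\infty + \Vert\Phi_h-\id\Vert_\infty\Vert D^2u^e\Vert_\infty \lesssim h^{k+1}$ pointwise (using the $L^\infty$ bounds on $D^2 u^e$, available from $H^{3,\infty}$ for $k=2$ and from $H^{k+1}\hookrightarrow W^{2,\infty}$ in dimension $d\leq 3$ for $k\geq 3$); hence $\Vert\partial_n w\Vert_{-\frac12,h,\Gamma_h} = h^{1/2}\Vert\partial_n w\Vert_{\Gamma_h}\lesssim h^{1/2}h^{k+1}S(u)\lesssim h^{k+1/2}S(u)$.

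\textbf{Main obstacle.} The genuinely delicate point is the role of the two regularity regimes and the fact that the bound is only $h^{k+\frac12}$, not $h^{k+1}$: for $k=2$ one must use the $L^\infty$-based norm $H^{3,\infty}$ because the $H^{k+1}=H^3$-based extension would, via trace theorems, lose a half-power and one would not even get the volume estimate cleanly with the constants independent of $\Gamma$; the trace estimates on $\Gamma_h$ (a curved, mesh-dependent surface) must be carried out with constants uniform in the cut position, which forces one to transform to $\Gammalin$ via Lemma~\ref{lemF} and use that the surface measure of $\Gammalin$ restricted to each cut element is $\lesssim h^{d-1}$, shape-regularly. I would handle this by doing every boundary estimate element-by-element on $\Theta_h(T)\cap\Gamma_h$, pulling back to $\Gammalin\cap T$, applying the pointwise bounds above with the $L^\infty$ norms of $u^e$ and its derivatives, and summing; the $h^{k+\frac12}$ rate then emerges as $h^{-1/2}\cdot h^{k+1}$ from the trace scaling, matching the stated $S(u)$ in both cases.
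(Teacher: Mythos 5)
There is a genuine gap in your Step 1, and it sits exactly at the heart of the lemma. You bound the first summand $(D\Phi_h-I)^T(\nabla u^e)\circ\Phi_h$ by $\Vert D\Phi_h-I\Vert_{\infty,\widetilde\Omega}\,\Vert\nabla u^e\Vert_{\Omega}\lesssim h^{k+1}\Vert u^e\Vert_{H^1}$, but \eqref{eq:estphih} only gives $\Vert I-D\Phi_h\Vert_{\infty,\widetilde\Omega}\lesssim h^{k}$ (the $h^{k+1}$ rate is for $\Phi_h-\id$ itself, not its derivative). With the correct bound, your global argument — pairing $\Vert D\Phi_h-I\Vert_\infty$ with the $L^2$ norm of $\nabla u^e$ over all of $\Omega$ — yields only $h^{k}S(u)$ for the volume part of $\Vert\cdot\Vert_A$, which is not enough for \eqref{io}. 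The missing idea is the localization that the paper's proof uses: $\Phi_h=\id$ outside a band $U_{\delta_h}$ around $\Gamma$ of measure $|U_{\delta_h}|\lesssim h$, so the integrand vanishes outside that band; restricting there and estimating $\nabla u^e$ (and $D^2u^e$) in $L^\infty$ rather than $L^2$ produces the extra factor $|U_{\delta_h}|^{1/2}\simeq h^{1/2}$, i.e.
\begin{equation*}
\Vert (D\Phi_h-I)^T(\nabla u^e)\circ\Phi_h\Vert_{L^2(\Omega_h)}\lesssim \Vert D\Phi_h-I\Vert_{\infty}\,|U_{\delta_h}|^{\frac12}\,\Vert \nabla u^e\Vert_{\infty}\lesssim h^{k+\frac12}S(u).
\end{equation*}
This band-plus-$L^\infty$ mechanism is precisely why $S(u)$ is the $L^\infty$-based norm $\Vert u\Vert_{H^{3,\infty}}$ for $k=2$ (and why $H^{k+1}\hookrightarrow W^{2,\infty}$ suffices for $k\geq 3$), and it is why the lemma claims only $h^{k+\frac12}$ and not $h^{k+1}$; your "Main obstacle" paragraph gestures at $L^\infty$ norms and trace scaling, but never invokes the measure of the deformation region, which is the actual source of the half power.

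The same misquoted bound appears in your conormal estimate ("$\lesssim h^{k+1}$ pointwise"); there it is harmless, because with the correct pointwise order $h^{k}$ the prefactor $h^{1/2}$ in $\Vert\cdot\Vert_{-\frac12,h,\Gamma_h}$ still gives $h^{k+\frac12}S(u)$, which matches the paper's computation. Your treatment of $\Vert u\circ\Phi_h-u^e\Vert_{\frac12,h,\Gamma_h}$ (FTC along the segment from $x$ to $\Phi_h(x)$, using $\Vert\Phi_h-\id\Vert_\infty\lesssim h^{k+1}$) is correct and agrees with the paper. So the proposal needs repair only in the volume term, but that repair requires the one idea the proposal lacks.
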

\begin{proof}
We first consider $\Vert \nabla(u^e - u \circ \Phi_h) \Vert_{L^2}$:
\begin{align*}
 & \Vert \nabla ( u^e - u \circ \Phi_h )\Vert_{L^2(\Omega_h)}^2 
 \lesssim \Vert \nabla (u^e \circ \Phi_h^{-1} - u^e )\Vert_{L^2(\Omega)}^2   \\
 & \lesssim | U_{\delta_h} |\big( \|u^e \Vert_{H^{1}(\Omega)}^2 \Vert D \Phi_h^{-1} - \id \Vert_{\infty,\Omega}^2 + \|u^e \Vert_{H^{2,\infty}(\Omega)}^2 \Vert \Phi_h^{-1} - \id \Vert_{\infty,\Omega}^2 \big) \lesssim h^{2k+1} S(u)^2,
\end{align*}
where $U_{\delta_h}$ is the domain where $\Phi_h \neq \id$, $| U_{\delta_h} | \lesssim h$.  Next, we consider the boundary term:
\begin{align*}
  \Vert u^e - u \circ \Phi_h \Vert_{L^2(\Gamma_h)}^2 & \lesssim \|u^e \circ \Phi_h^{-1} - u^e \|_{L^2(\Gamma)}^2 \\
 & \lesssim | \Gamma |  \Vert  u^e \Vert_{H^{1,\infty}(\Omega)}^2 \Vert \Phi_h^{-1} - \id \Vert_{\infty,\Omega}^2  \lesssim h^{2k+2} S(u)^2.
\end{align*}
Accordingly,
\begin{align*}
 & \Vert {\nabla ( u^e - u \circ \Phi_h) \cdot n } \Vert_{L^2(\Gamma_h)}^2 \\
 & \lesssim \Big(\Vert u^e \Vert_{H^{2}(\Omega)}^2 \Vert D\Phi_h^{-1} - \id \Vert_{\infty,\Omega}^2 + | \Gamma |\Vert u^e \Vert_{H^{2,\infty}(\Omega)}^2 \Vert \Phi_h^{-1} - \id \Vert_{\infty,\Omega}^2 \Big)\lesssim h^{2k} S(u)^2,
\end{align*}
which implies $\Vert { \nabla (u^e - u \circ \Phi_h) \cdot n } \Vert_{-\frac12,h,\Gamma_h} \lesssim h^{k+\frac12} S(u)$. Combining these estimates completes the proof.
\end{proof}
\begin{lemma} \label{lemapprox} Let $u$ be given with $u \in H^{3,\infty}(\Omega)$ if $k=2$, and   $u \in H^{k+1}(\Omega)$ if $k \geq 3$. The following holds:
\begin{equation*}
\inf_{v_h \in \mathcal{V}_h} \Vert u\circ \Phi_h - v_h \Vert_A + \Vert v_h \Vert_J   \lesssim h^k \begin{cases}
  \|u\|_{H^{3,\infty}(\Omega_1 \cup \Omega_2)} &~~\text{if}~~k=2, \\                                                                                
    \|u\|_{H^{k+1}(\Omega_1 \cup \Omega_2)} & ~~\text{if}~~k \geq 3. 
\end{cases}
\end{equation*}
\end{lemma}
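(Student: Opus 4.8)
The plan is to choose as competitor $v_h$ the isoparametric Lagrange (or Scott--Zhang / Clément) interpolant of the globally smooth extension $u^e = Eu$ constructed in Lemma \ref{lemuext}, i.e. set $v_h := (I_h^k(u^e \circ \Theta_h)) \circ \Theta_h^{-1}$, where $I_h^k$ is the standard nodal interpolation onto $V_h^k$ on the (undeformed) mesh restricted to $\Omega^\mT$. Then I would split the error by the triangle inequality
\[
\Vert u\circ\Phi_h - v_h\Vert_A + \Vert v_h\Vert_J \le \Vert u\circ\Phi_h - u^e\Vert_A + \Vert u^e - v_h\Vert_A + \Vert v_h\Vert_J .
\]
The first term is already bounded by $h^{k+1/2} S(u)$ in Lemma \ref{lemuext}, which is even better than the claimed $h^k$. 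So it remains to bound the interpolation error $\Vert u^e - v_h\Vert_A$ and the ghost-penalty seminorm $\Vert v_h\Vert_J$.

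For the interpolation error I would transform all the norms making up $\Vert\cdot\Vert_A$ (the volume $H^1$-seminorm on $\Omega_h$, the $\Vert\cdot\Vert_{1/2,h,\Gamma_h}$ term, and the $\Vert\partial_n\cdot\Vert_{-1/2,h,\Gamma_h}$ term) back to the reference configuration $\Omegalin$, $\Gammalin$ using Lemma \ref{lemF}; this reduces everything to estimating $\Vert \widehat{u^e} - I_h^k\widehat{u^e}\Vert$ in $H^1(\Omega^\mT)$ and on $\Gammalin$ for the pulled-back function $\widehat{u^e} := u^e\circ\Theta_h$. Here I would invoke Lemma \ref{lem:invtrafo} to control the derivatives of $\Theta_h$ up to order $k+1$, so that $\widehat{u^e}$ inherits the Sobolev regularity of $u^e$ with uniformly bounded norms; then standard polynomial interpolation estimates on shape-regular simplices give $O(h^k)$ for the $H^1$-seminorm and $O(h^{k+1/2})$ for the $L^2(\Gammalin)$ trace term (via a scaled trace inequality), while for the $-1/2$ weighted normal-derivative term one uses an inverse/trace inequality on cut elements made legitimate by the ghost-penalty mechanism, again yielding $O(h^k)$. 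The measure of the strip where $\Theta_h \ne \id$ contributes only to constants. In the case $k=2$ one uses the $H^{3,\infty}$-regularity to have $u^e \in H^{3,\infty}(\widetilde\Omega)$, which is what makes the interpolation estimate and the trace estimates work with the weaker (but $L^\infty$-based) norm; for $k\ge 3$ the $H^{k+1}$-regularity suffices.

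For the ghost-penalty term $\Vert v_h\Vert_J^2 = J_h(v_h,v_h) = \sum_{l=1}^k \gamma_l \sum_{F\in\mF_h} h^{2l-1}\int_{\Theta_h(F)} \spacejump{\partial_n^l v_h}^2\,ds$, the key point is that the jumps $\spacejump{\partial_n^l v_h}$ measure the lack of smoothness of $v_h$ across facets, and $v_h$ approximates the \emph{globally smooth} function $u^e$ (composed with $\Phi_h$); this is exactly why Lemma \ref{lemuext} was set up with a smooth extension. I would add and subtract $u^e\circ\Phi_h^{-1}$ (or rather, pull back to the reference mesh where $v_h\circ\Theta_h = I_h^k\widehat{u^e}$ is a genuine piecewise polynomial): the jumps of $\partial_n^l$ of a piecewise polynomial interpolant of a smooth function across an interior facet $\hat F$ are controlled by the interpolation error, $\Vert \spacejump{\partial_n^l I_h^k\widehat{u^e}}\Vert_{L^2(\hat F)} \lesssim h^{k+1-l-1/2}\,|\widehat{u^e}|_{H^{k+1}}$ (for $k=2$, with the $H^{3,\infty}$ norm and an extra $h^{1/2}|\hat F|^{1/2}$), since the jump of the exact $\partial_n^l u^e$ is zero. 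Transforming $\Theta_h(F)$ back to $\hat F$ costs only uniform constants by Lemma \ref{lem:invtrafo} and Lemma \ref{lemF}. Plugging in, each term scales like $h^{2l-1}\cdot h^{2(k+1-l)-1} = h^{2k}$, so $\Vert v_h\Vert_J \lesssim h^k S(u)$, with $S(u)$ replaced by the stated local norms on $\Omega_1\cup\Omega_2$. Collecting the three contributions gives the claimed bound.

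The main obstacle I anticipate is the ghost-penalty seminorm estimate: one must carefully track how the curved facets $\Theta_h(F)$ and the non-polynomial nature of $v_h = \widehat v_h\circ\Theta_h^{-1}$ interact with the higher-order normal-derivative jumps, ensuring that pulling back to $\hat F$ genuinely converts $\spacejump{\partial_n^l v_h}$ on $\Theta_h(F)$ into (a uniformly equivalent multiple of) the jump of $\partial_n^l$ of a true piecewise polynomial on $\hat F$, modulo lower-order chain-rule terms that are themselves controlled by Lemma \ref{lem:invtrafo}; these chain-rule/commutator terms mixing derivatives of $\Theta_h^{-1}$ with derivatives of $\widehat v_h$ are the delicate bookkeeping. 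The subtlety that only $H^{3,\infty}$ (not $H^{k+1}$) is assumed when $k=2$ — which forces the use of $L^\infty$-type interpolation and trace bounds with an extra factor of $|\hat F|^{1/2}$ or $|\Omega_h|^{1/2}$ to recover an $L^2$-based estimate — must also be handled consistently throughout.
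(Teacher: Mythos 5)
Your proposal follows essentially the same route as the paper's proof: the same competitor (the isoparametric nodal interpolant of the globally smooth extension $u^e$ from Lemma \ref{lemuext}), optimal approximation in the $\Vert\cdot\Vert_A$-norm obtained from Lemma \ref{lem:invtrafo} via Ciarlet--Raviart-type estimates for the mapped space, and the same key observation for the ghost-penalty term, namely that $\spacejump{\partial_n^l v_h}=\spacejump{\partial_n^l (v_h-u^e)}$ because $u^e$ is globally smooth, which yields the per-facet bound $h^{2k-2l+1}S_T(u)^2$ cancelling the $h^{2l-1}$ scaling to give $\Vert v_h\Vert_J\lesssim h^k S(u)$. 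One small correction: the bound on $\Vert\partial_n(u^e-v_h)\Vert_{-\frac12,h,\Gamma_h}$ does not rely on the ghost-penalty mechanism at all --- $u^e-v_h$ is not a discrete function, so no inverse inequality is involved; one simply applies the trace inequality on the full curved element together with the interpolation estimates, which is also how the paper handles the facet terms.
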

\begin{proof}
Due to Lemma \ref{lemuext} it remains to derive a bound for
$$
\inf_{v_h \in \mathcal{V}_h} \Vert u^e - v_h \Vert_A + \Vert v_h \Vert_J.
$$
To this end we use unfitted interpolation strategies as in \cite{hansbo2002unfitted}. To the nodal interpolation operator $I_k$ in $\mathcal{V}_h^k$ we define the unfitted interpolation operator $v_h = (I_k u^e)|_{\Omega_h}$ to obtain the bounds (cf. \cite{LR16a} for details):
$$
| u^e\!\! - v_h |_{H^1(\Omega_h)} \lesssim h^k S(u),~~
\Vert u^e\!\! - v_h \Vert_{\frac12,h,\Gamma_h} \lesssim h^k S(u),~~
\Vert \partial_n (u^e\!\! - v_h) \Vert_{-\frac12,h,\Gamma_h} \lesssim h^k S(u).
$$
We note that Lemma \ref{lem:invtrafo} is crucial to obtain these optimal approximation results on the background mesh for the mapped finite element space $\mathcal{V}_h^k = V_h^k \circ \Theta_h^{-1}$, cf. \cite{ciarlet1972interpolation}. 
Finally we bound the ghost penalty contributions, $l = 1,..,k$:
\begin{align*}
  \sum_{F\in\mathcal{F}_{h}} \Vert \spacejump{\partial_n^l v_h} \Vert_{\Theta_h(F)}^2  
 & = \sum_{F\in\mathcal{F}_{h}} \Vert \spacejump{\partial_n^l (u^e - v_h)} \Vert_{\Theta_h(F)}^2  
 \lesssim \sum_{T \in\mathcal{T}_{h}} \Vert D^l (u^e - v_h) \Vert_{\partial \Theta_h(T)}^2  \\
 & \lesssim \sum_{T \in\mathcal{T}_{h}} h^{-1} \Vert D^{l} (u^e - v_h) \Vert_{\Theta_h(T)}^2  + h \Vert D^{l+1} (u^e - v_h) \Vert_{\Theta_h(T)}^2  \\
 & \lesssim \sum_{T \in\mathcal{T}_{h}} h^{2k-2l+1} \ S_T(u)^2,
\end{align*}
where $S_T(u)$ is the element localized version of $S(u)$ in \eqref{io}. 
Together with the scaling of the ghost penalty terms, we obtain the result.
\end{proof}

\subsection{A priori error bound}
As a direct consequence of the previous estimates we obtain the following a priori error bound.
\begin{corollary}
  Assume that $\lambda$ is sufficiently large and $h$ is sufficiently small, so that Lemma \ref{lem:gp} holds true.
  Let $u$ be the solution to \eqref{eq:ellmodel} with $u \in H^{3,\infty}(\Omega)$ if $k=2$, and $u \in H^{k+1}(\Omega)$ if $k \geq 3$. Further, assume $f \in H^{1,\infty}(\Omega)$, $u_D \in H^{1,\infty}(\Gamma)$ with the data extensions $f^e$ and $u_D^e$ fulfiling the requirements of Lemma \ref{lemconsist}. For $u_h \in \mathcal{V}_h$ the solution of \eqref{eq:Nitsche1} there holds
  \begin{equation}
    \Vert u \circ \Phi_h - u_h \Vert_A \lesssim h^k (S(u) + \Vert f \Vert_{1,\infty,\Omega} + \sqrt{h} \Vert u_D \Vert_{1,\infty,\Gamma}).
  \end{equation}
\end{corollary}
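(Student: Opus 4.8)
The plan is to combine the Strang-type estimate of Lemma~\ref{lem:strang} with the consistency error bounds of Lemma~\ref{lemconsist} and the approximation result of Lemma~\ref{lemapprox}. These three ingredients are exactly tailored to the four terms on the right-hand side of \eqref{eq:strang}, so the corollary should follow essentially by assembling them. More concretely, I would start from Lemma~\ref{lem:strang}, which gives
\begin{equation*}
\Vert u \circ \Phi_h - u_h \Vert_A \lesssim \text{(term \eqref{eq:str1})} + \text{(term \eqref{eq:str2})} + \text{(term \eqref{eq:str3})} + \text{(term \eqref{eq:str4})}.
\end{equation*}

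Next I would bound each term individually. The infimum term \eqref{eq:str1}, namely $\inf_{v_h \in \mathcal{V}_h} (\Vert u\circ\Phi_h - v_h\Vert_A + \Vert v_h\Vert_J)$, is precisely what Lemma~\ref{lemapprox} controls, yielding $\lesssim h^k S(u)$ under the stated regularity hypotheses on $u$. The consistency terms \eqref{eq:str2} and \eqref{eq:str3} are the suprema over $w_h \in \mathcal{V}_h$ of $|f_h^1(w_h) - f(w_h\circ\Phi_h^{-1})|/\Vert w_h\Vert_A$ and $|A_h^1(u\circ\Phi_h,w_h) - A(u,w_h\circ\Phi_h^{-1})|/\Vert w_h\Vert_A$; these are bounded by \eqref{p2} and \eqref{p1} of Lemma~\ref{lemconsist} by $\lesssim h^k\Vert f\Vert_{1,\infty,\Omega}$ and $\lesssim h^k\Vert u\Vert_{H^2(\Omega)}$ respectively (note $\Vert u\Vert_{H^2(\Omega)} \lesssim S(u)$, so this is absorbed into the $S(u)$ term). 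Finally, the boundary term \eqref{eq:str4}, $\Vert u\circ\Phi_h - u_D^e\Vert_{\frac12,h,\Gamma_h}$, is exactly \eqref{p3} of Lemma~\ref{lemconsist}, giving $\lesssim h^{k+\frac12}\Vert u_D\Vert_{1,\infty,\Gamma} = h^k \cdot \sqrt h\,\Vert u_D\Vert_{1,\infty,\Gamma}$. Collecting these bounds gives the claimed estimate.

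There is essentially no hard part here, since all the work has been done in the preceding lemmas; the proof is a bookkeeping exercise of matching each Strang term to the appropriate prior bound and checking the hypotheses line up. The only points requiring a moment's care are: verifying that the regularity assumption on $u$ stated in the corollary ($u\in H^{3,\infty}(\Omega)$ for $k=2$, $u\in H^{k+1}(\Omega)$ for $k\ge3$) is exactly what Lemmas~\ref{lemapprox} and \ref{lemconsist} require; noting that Lemma~\ref{lemconsist} also needs $u\in H^2(\Omega)$, which is implied by the stated hypotheses; and recognizing that the various data-dependent seminorms appearing ($\Vert u\Vert_{H^2(\Omega)}$, $S(u)$) can all be subsumed into the single quantity $S(u)$ in the final bound. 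One should also recall that Lemma~\ref{lem:strang} and the coercivity used in its proof require $\lambda$ sufficiently large and $h$ sufficiently small so that Lemma~\ref{lem:gp} applies — which is precisely the hypothesis of the corollary — so there is no additional smallness condition to track. Thus the proof reads: apply Lemma~\ref{lem:strang}, then bound \eqref{eq:str1} by Lemma~\ref{lemapprox}, \eqref{eq:str2}--\eqref{eq:str3} by \eqref{p2} and \eqref{p1} of Lemma~\ref{lemconsist}, and \eqref{eq:str4} by \eqref{p3} of Lemma~\ref{lemconsist}, and sum.
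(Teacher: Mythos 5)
Your proposal is correct and follows exactly the paper's intended argument: the corollary is stated as a direct consequence of Lemma~\ref{lem:strang} combined with Lemmas~\ref{lemconsist} and \ref{lemapprox}, which is precisely the assembly you describe (including the observation that $\Vert u\Vert_{H^2(\Omega)}$ is absorbed into $S(u)$).
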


\begin{remark}
  We note that for every extension $w$ of $u$ with $\Vert w \circ \Phi_h - u^e \Vert_A \lesssim h^{k} S(u)$ there holds
  \begin{equation} \label{apriori2}
    \Vert w - u_h \Vert_A \lesssim h^k (S(u) + \Vert f \Vert_{1,\infty,\Omega} + \sqrt{h} \Vert u_D \Vert_{1,\infty,\Gamma}).
  \end{equation}
  For instance, for the extension $u^e$ in Lemma \ref{lemapprox} we have $\Vert u \circ \Phi_h - u^e \Vert \lesssim h^{k+\frac12} S(u)$, cf. \eqref{io}, and thus \eqref{apriori2} holds for $w=u^e$.
\end{remark}
\section{Numerical example} \label{sec:numex}
In this section we present results of two numerical experiments for the previously introduced and analyzed method. One example uses a uniform background mesh and homogeneous Dirichlet boundary data and the second one considers a mesh which is not quasi-uniform and has inhomogeneous Dirichlet data.
Both experiments were carried out in \texttt{ngsxfem} which is an add-on library to the finite element library \texttt{NGSolve} \cite{schoeberl2014cpp11}.

\subsection{Ring geometry on a uniform background mesh}\label{example1}

The problem has been investigated in \cite{boiveau2016fictitious}.
The background domain is $\widetilde\Omega = (-1,1)^2$ and the physical domain is a two-dimensional ring with inner radius $R_1=1/4$ and outer radius $R_2=3/4$, $\Omega = \{ \phi(x) \leq 0 \}$ with $\phi(x) := (r(x,y)-R_2)(r(x,y)-R_1),~ r(x,y) = \sqrt{x^2+y^2}$.
\begin{figure}[h!]
  \vspace*{-0.05cm}
  \begin{center}
    \includegraphics[width=0.35\textwidth,clip=true, trim=12mm 25mm 18mm 25mm]{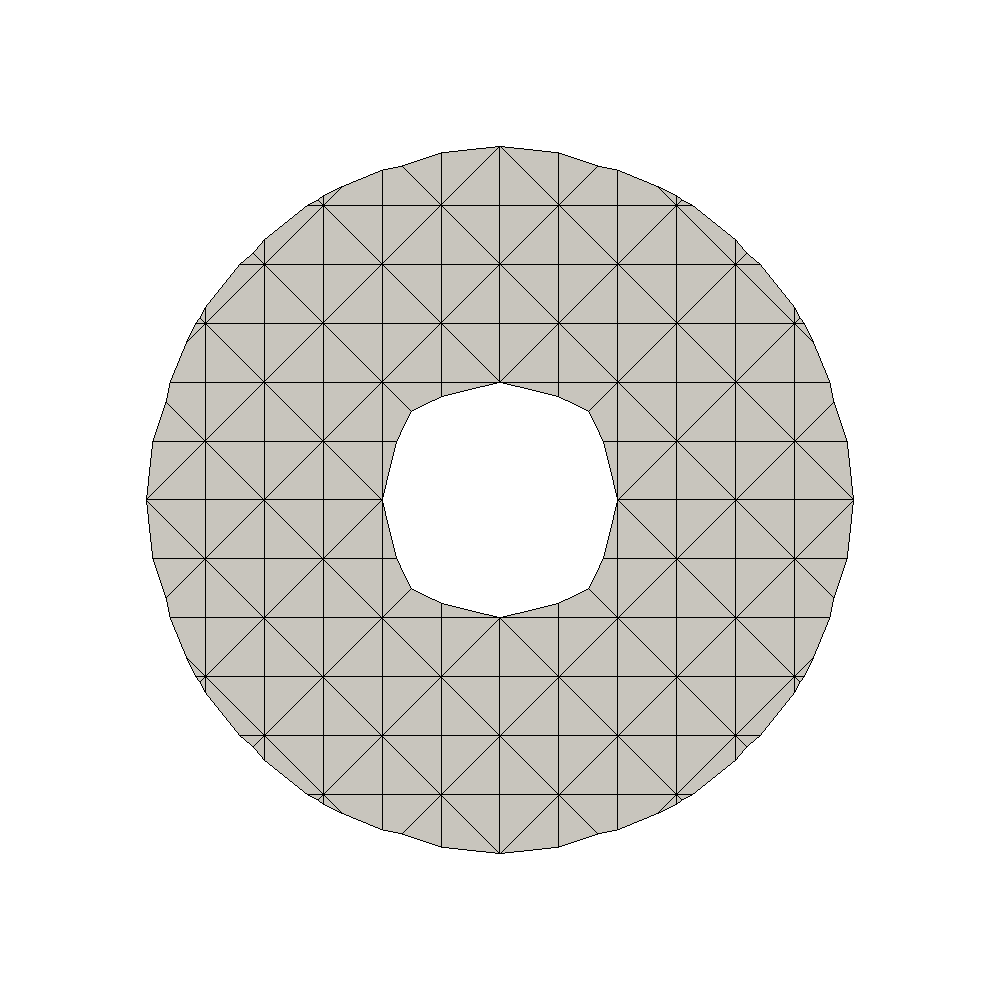} \hspace*{-0.04\textwidth}
    \includegraphics[width=0.35\textwidth,clip=true, trim=6mm 25mm 18mm 25mm]{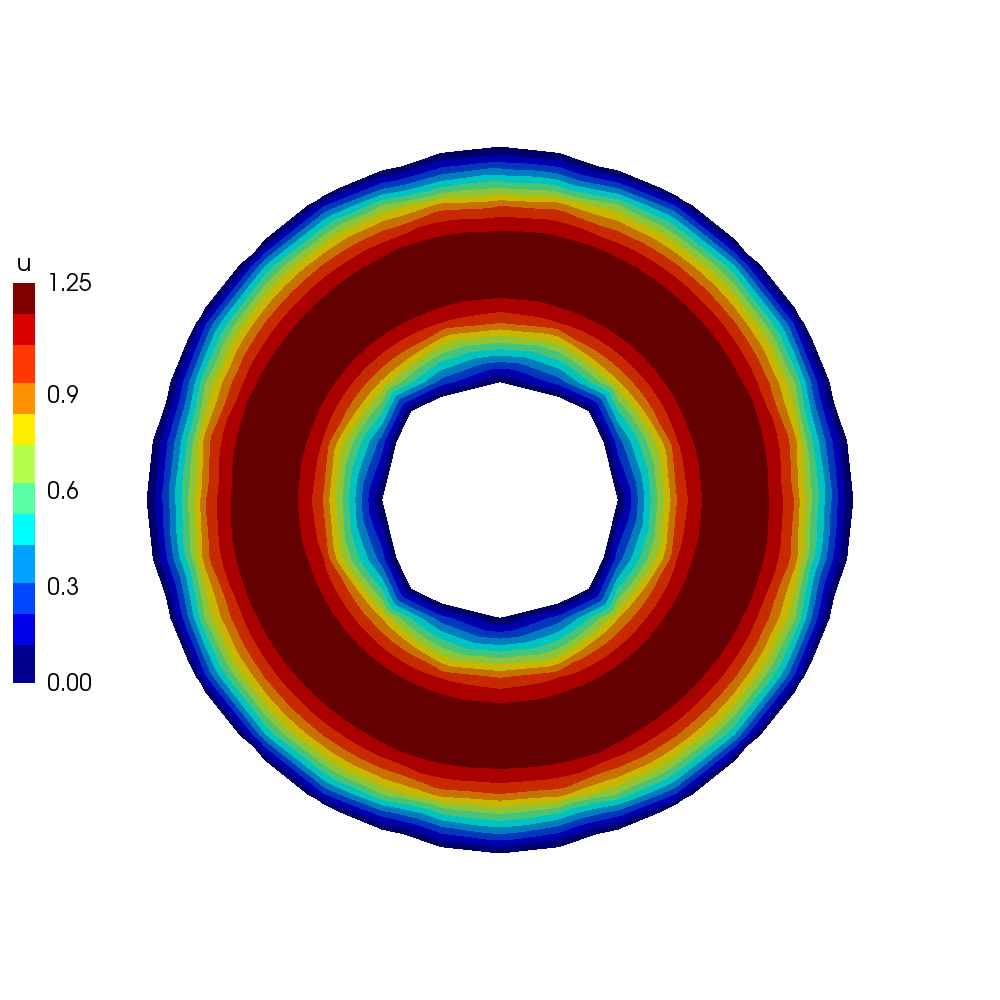} \hspace*{-0.04\textwidth} 
    \includegraphics[width=0.35\textwidth,clip=true, trim=6mm 25mm 18mm 25mm]{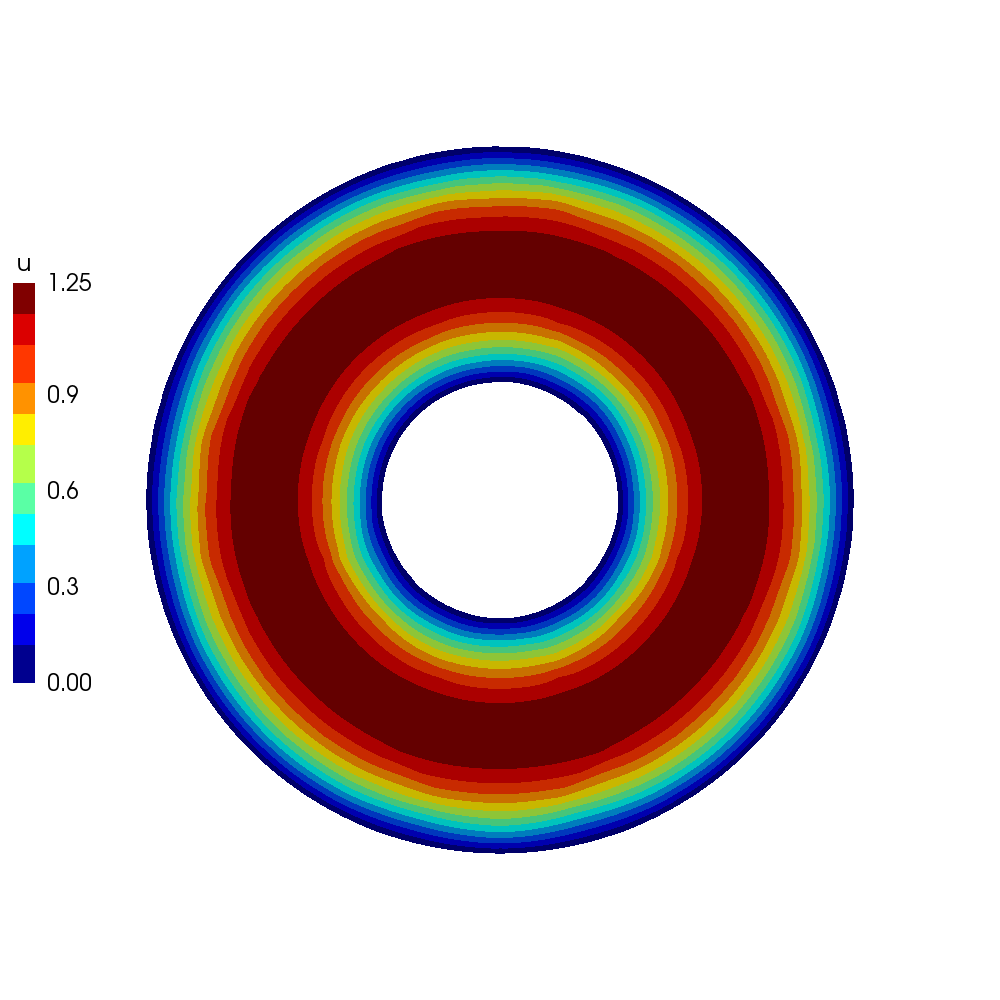} 
  \end{center}
  \vspace*{-0.7cm}
  \caption{Mesh and domain $\Omegalin$ on level $L=1$ (left), the discrete solution $u_h \circ \Theta_h \in V_h$ for $k=4$ on $\Omegalin$ (center) and the discrete solution $u_h \in \mathcal{V}_h$ for $k=4$ on $\Omega_h$ (right).} \label{fig:sketchsol}
\end{figure}
The level set function $\phi$ is approximated with $\phi_h \in V_h^k,~k=1,2,3,4$ by interpolation. We note that $\phi$ is not a signed distance function. Further, for $k=1$ we get $\Theta_h = \operatorname{id}$, i.e. the mesh is unchanged.
For the problem in \eqref{eq:ellmodel}, we take $u_D=u_D^e = 0$ and right-hand side $f$ such that the solution is given by
$
u(x) = 20(3/4-r(x))(r(x)-1/4), \ x \in \Omega$.
For the numerical evaluation of errors we use a function $u^e$.
We choose canonical extensions of $f^e$ and $u^e$ by evaluting the formulas for $f$ and $u$ also in $\Omega_h$.
We start with a regular structured ``criss-cross'' mesh of size $8 \times 8$ and denote this as mesh level $L=0$. Mesh levels $L>0$ are obtained by repeatedly applying uniform mesh refinements.
In Fig. \ref{fig:sketchsol} the mesh on level $L=1$ and the domain $\Omegalin$ is displayed alongside with the numerical solution for $k=4$ (on $\Omegalin$ and $\Omega_h$).

\begin{figure}
  \vspace*{-0.2cm}
  \begin{center}
    \includegraphics[width=0.48\textwidth, trim=2mm 2mm 2mm 0mm]{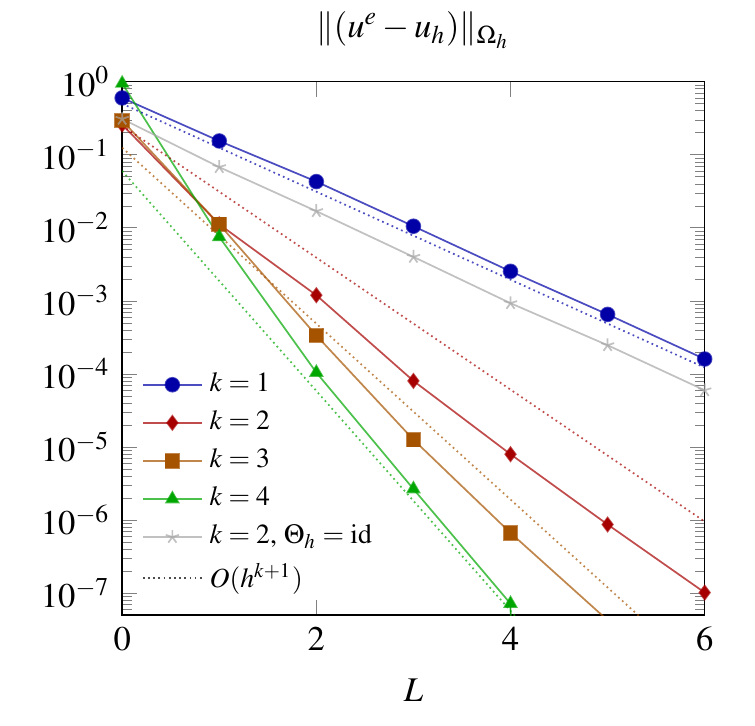} \hspace*{-0.025\textwidth}
    \includegraphics[width=0.48\textwidth, trim=2mm 2mm 2mm 0mm]{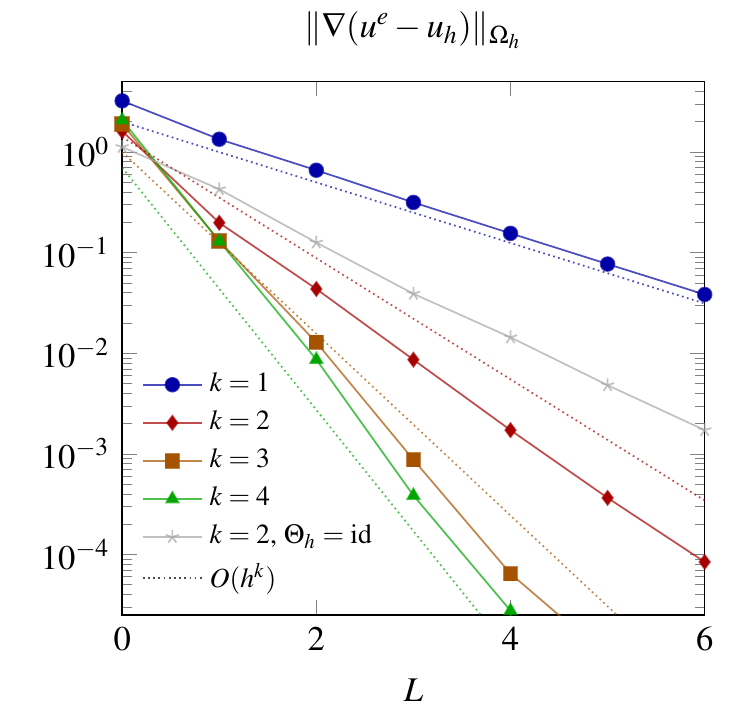}
  \end{center}
  \vspace*{-0.3cm}
   \caption{Convergence behavior under mesh refinement in the $L^2(\Omega_h)$ and the $H^1(\Omega_h)$-semi-norm.}
  \label{fig:numex1}
\end{figure}

\begin{figure}[h!]
  \vspace*{-0.2cm}
  \begin{center}
    \includegraphics[width=0.48\textwidth, trim=2mm 2mm 2mm 0mm]{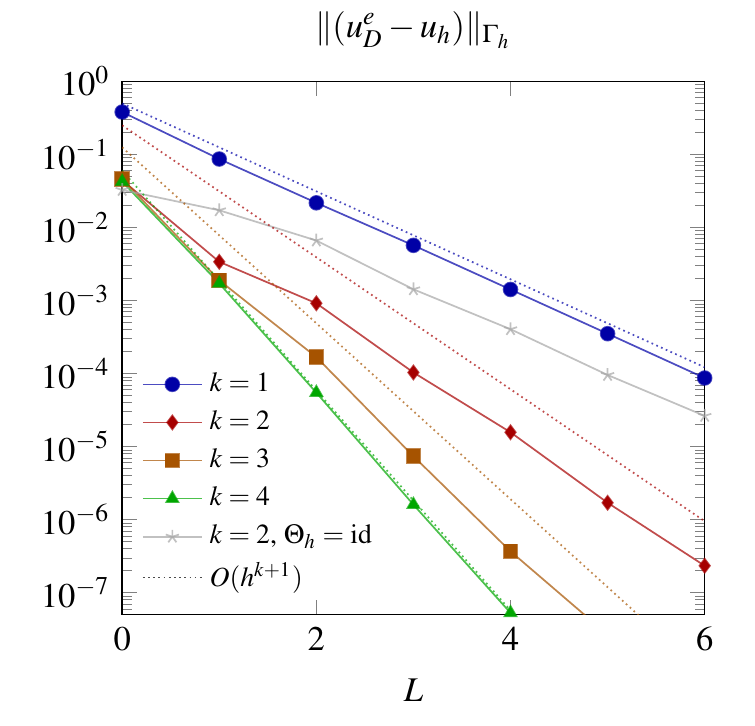} \hspace*{-0.025\textwidth}
  \end{center}
  \vspace*{-0.3cm}
  \caption{Convergence behavior under mesh refinement in the $L^2(\Gamma_h)$-semi-norm.}
  \label{fig:numex2}
\end{figure}

We choose the Nitsche parameter $\lambda = 10 k^2$ and the ghost penalty parameter $\gamma_l = 0.2 k (l-1)!^{-2}$. The scaling with $k$ in the Nitsche parameter stems from the dependency of the constant in the trace inverse estimate on $k$. The scaling of the ghost penalty paramter is motivated by the scaling of the derivative terms in the Taylor expansion in the proof of Lemma \ref{lem:gp}, cf. \eqref{gpest}.

In Fig. \ref{fig:numex1} we observe the convergence behavior of the isoparametric finite element method for norms in $\Omega_h$. We make the following observation. The $O(h^k)$ error bound for the $H^1$-norm as predicted by our error analysis is sharp. In the $L^2$-norm we also observe the optimal $\mathcal{O}(h^{k+1})$ convergence of the error.
The approximation of boundary values, i.e. the $L^2(\Gamma_h)$ norm of the error is depicted in Fig. \ref{fig:numex2} where we observe an $\mathcal{O}(h^{k+1})$ behavior although the previous error analysis only predicted the slightly worse bound $\mathcal{O}(h^{k+\frac12})$.

In both figure we also added the results of a discretization with $k=2$ and $\Theta_h=\operatorname{id}$, i.e. a higher order discretization on the low order geometry approximation $\Omega_h=\Omegalin$, as a comparison. In further experiments we found that the results for $k=3,4$ (and $\Omega_h=\Omegalin$) are almost identical. From this comparison, we easily conclude that the higher order geometry approximation in this method is crucial to obtain higher order convergence.

\subsection{Ellipsoid on a mesh that is not quasi-uniform}\label{example2}
As a second example we consider a background mesh which is not quasi-uniform. The background domain is $\tilde{\Omega}= (-1,1) \times (-1.1,1.1)$. The physical domain is an ellipsoid with half axes coinciding with the $x$ and $y$ direction and lengths $2/3$ ($x$-direction) and $2$ ($y$-direction). The corresponding level set function is $\phi(x) = \sqrt{3 x^2 + y^2} - 1$ so that $\Omega = \{\phi(x) < 0 \}$. As an initial mesh we consider a uniform mesh that is three times adaptively refined on all elements which have a non-trivial intersection with $\{ (x,y) \in \Omega \text{ and } |y| > 0.8\}$, see also Figure \ref{fig:numexb2} where the initial mesh is shown.
We set the right hand side data to $f=\cos(y)$ and choose $u_D =\cos(y)$ so that $u(x) = cos(y)$ solves the Poisson problem on $\Omega$. Again, we use the canonical extensions to define $u_D^e$, $f^e$ and $u^e$.
The stabilization parameters for the ghost penalty and the Nitsche stabilization parameter are chosen as before in section \ref{example1}. Starting from the initially non-quasi-uniform mesh we apply successive uniform refinements compute numerical approximations with the isoparametric unfitted method for order $k=1,2,3$ and measure the same errors as before in section \ref{example1}.

We note that the quasi-uniformity assumption is clearly not provided by this series of meshes in this example. However, we observe the same optimal convergence rates as in the example with the uniform background mesh.

\begin{figure}[h!]
  \vspace*{-0.05cm}
  \begin{center}
    \includegraphics[width=0.48\textwidth,clip=true, trim=5mm 5mm 5mm 5mm]{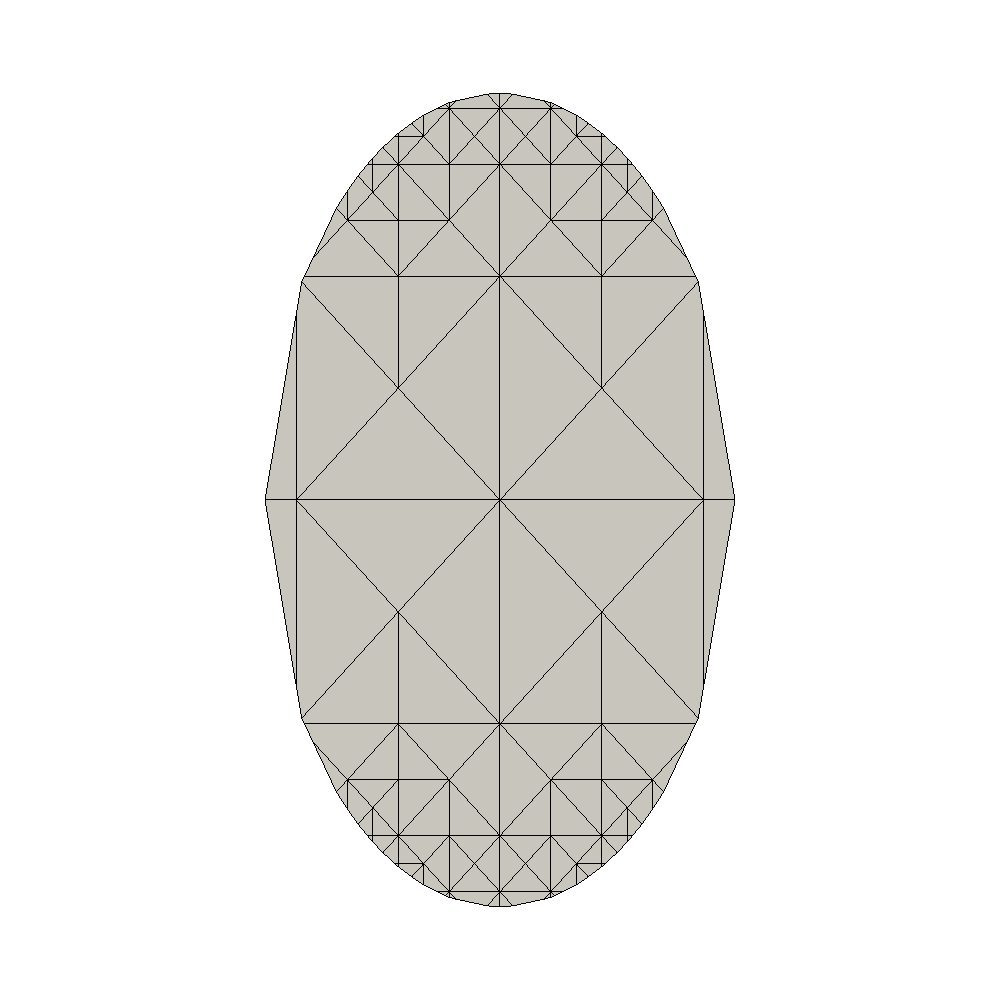}
    %\hspace*{-0.04\textwidth}
    \includegraphics[width=0.48\textwidth, trim=2mm 2mm 2mm 0mm]{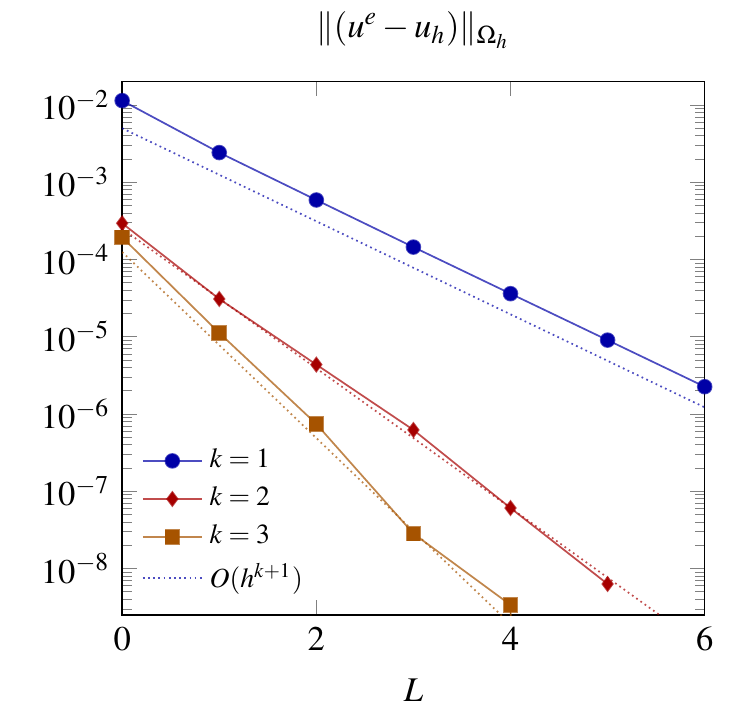}   \end{center}
  \vspace*{-0.4cm}
  \caption{Mesh and domain $\Omegalin$ to example in section \ref{example2} on level $L=0$ (left), convergence behavior under (uniform) mesh refinement in the $L^2(\Omega_h)$ (right).} \label{fig:numexb1}
\end{figure}

\begin{figure}
  \vspace*{-0.2cm}
  \begin{center}
    \includegraphics[width=0.48\textwidth, trim=2mm 2mm 2mm 0mm]{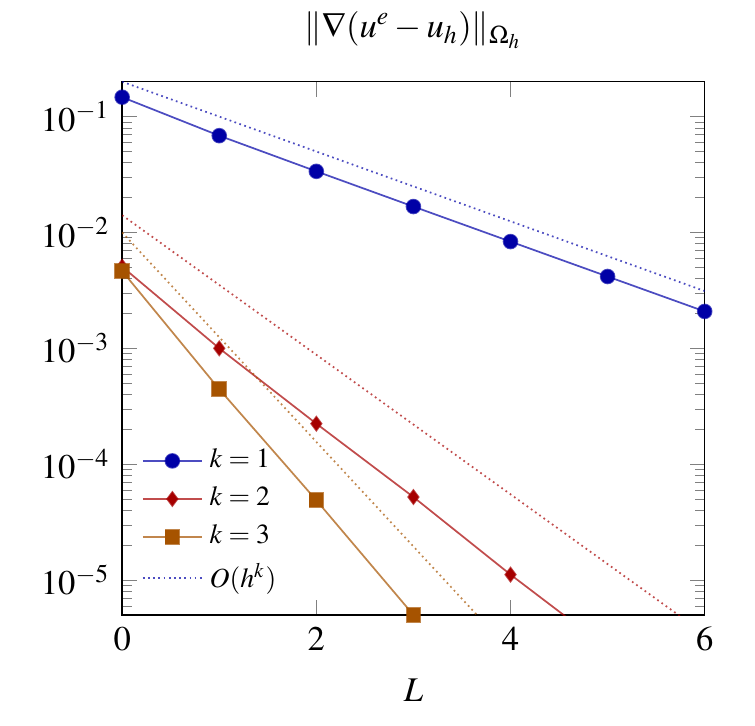} \hspace*{-0.025\textwidth}
    \includegraphics[width=0.48\textwidth, trim=2mm 2mm 2mm 0mm]{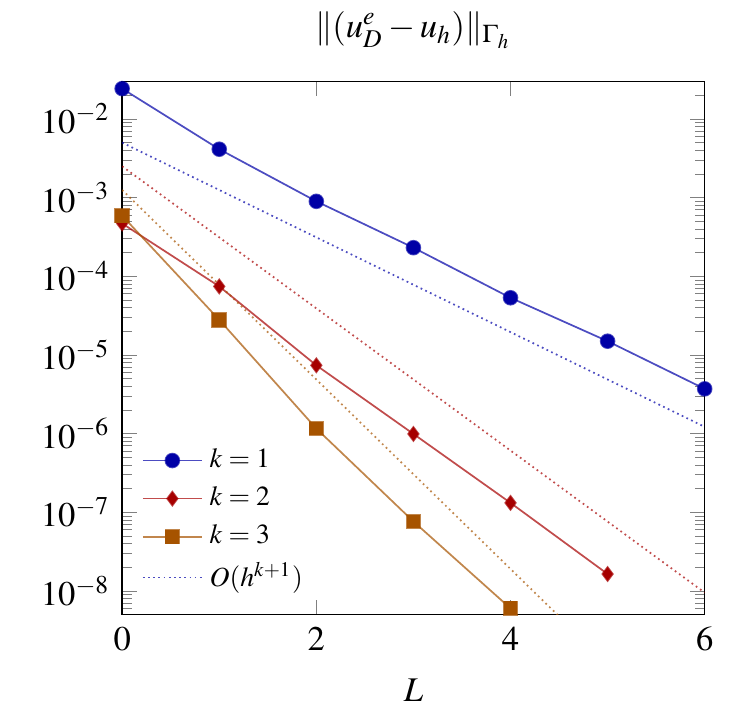}
  \end{center}
  \vspace*{-0.3cm}
   \caption{Convergence behavior under mesh refinement in the $H^1(\Omega_h)$-semi-norm and .}
  \label{fig:numexb2}
\end{figure}

\section{Conclusion and outlook} \label{sec:outlook}
We presented the concept of isoparametric unfitted finite element methods. To this end we discussed the construction of the isoparametric mapping $\Theta_h$ and its most important properties. Based on this mesh transformation we formulated a higher order isoparametric fictitious domain finite element method in the spirit of the stabilized Nitsche discretization in \cite{burman2012fictitious}. The main contribution in this paper is the a priori error analysis. For this, we made use of recent results from previous works on isoparametric unfitted finite element methods, cf. \cite{GLR16,LPWL_PAMM_2016,LR16a,LR16b}. However, the interplay of curved meshes and the higher order ghost penalty stabilization required new components in the error analysis to establish the coercivity result in Lemma \ref{lem:gp}. In a numerical experiment we validated the predictions arising from the a priori error analysis.

The methodology of isoparametric unfitted finite element methods offers one possibility to deal with the geometry approximation of implicitly described geometries with higher order accuracy. The ghost penalty method on the other hand offers a mechanism to robustly handle small cut configurations. One important feature of ghost penalty stabilizations is the fact that also the conditioning of linear systems is independent of the cut geometry.
However, this comes at the price of adding more terms (depending on the order of the discretization) to the variational formulation. Further, the ghost penalty integrals introduce additional couplings in the linear systems. More importantly, it is unclear how to efficiently precondition linear systems arising from (higher order) ghost penalty discretizations. This is a topic that is independent of the quality of the geometry approximation and should be investigated in the future.

\begin{acknowledgement}
The author gratefully acknowledges funding by the German Science Foundation (DFG) within the project ``LE 3726/1-1''.
\end{acknowledgement}

\appendix

\section{Appendix: Selected proofs}\label{sec:proofs}
We give the proofs of some of the more technical results in section \ref{sec:analysis}. 
\subsubsection*{Proof of Lemma \ref{lem:invtrafo}}
  The first estimate has been proven in \cite{GLR16} for $T \in \mathcal{T}_h^\Gamma$. With the extension operator applied in the projection step $P_h^2$ this property carries over to every element $T \in \mathcal{T}_h$, cf. the analysis in \cite{LR16a}.
  The proof for the inverse transformation is based around the following estimate from \cite{ciarlet1972interpolation}:
  \begin{align}
    \vert \Theta_h^{-1} \vert_{l,\infty,\Theta_h(T)}
    & \lesssim
      | \Theta_h^{-1} |_{1,\infty,\Theta_h(T)}
     \sum_{m=2}^l | \Theta_h |_{m,\infty,T} \sum_{j \in \mathcal{I}(m,l)} \prod_{n=1}^{l-1} \vert \Theta_h^{-1} \vert_{n,\infty,\Theta_h(T)}^{j_n} \nonumber \\ %\label{eq:invrule}\\
    \text{ with } \quad \mathcal{I}(m,l) & := \{ j = (j_1,..,j_{l-1}) \mid \sum_{n=1}^{l-1} j_n = m, \sum_{n=1}^{l-1} n j_n = l\}. \nonumber
  \end{align}
  Starting with $\vert \Theta_h^{-1} \vert_{1,\infty,\Theta_h(T)} \lesssim 1$ which follows from Lemma \ref{propertiesdh} the claim follows by induction.

\subsubsection*{Proof of Lemma \ref{lemhighder}}
  We define $\hat{v} := v \circ \Theta_h$ with $\hat{v}|_T \in \mathcal{P}^k(T),~ T \in \mathcal{T}_h^{\Gamma,+}$.
  There holds the following estimate due to a higher order chain rule for multivariate functions, cf. \cite{ciarlet1972interpolation},
  \begin{align}
    \vert \hat{v} \circ \Theta_h^{-1} \vert_{l,\infty,\Theta_h(T)}
   & \lesssim
     \sum_{m=1}^l | \hat{v} |_{m,\infty,T} \sum_{j \in \mathcal{J}(m,l)} \prod_{n=1}^l \vert \Theta_h^{-1} \vert_{n,\infty,\Theta_h(T)}^{j_n} \nonumber \\ % \label{eq:chainrule}\\
    \text{ with } \quad \mathcal{J}(m,l) & := \{ j = (j_1,..,j_l) \mid \sum_{n=1}^l j_n = m, \sum_{n=1}^l n j_n = l\}. \nonumber
  \end{align}
  There holds the finite element inverse inequality
  $
  \vert \hat{v} \vert_{j,\infty,T} \lesssim h^{-j} \Vert \hat{v} \Vert_{\infty,T},~j\geq 0.
  $
  Now, with Lemma \ref{lem:invtrafo} we have
  $ \vert \Theta_h^{-1} \vert_{j,\infty,\Theta_h(T)} \lesssim 1$ and $\vert \Theta_h \vert_{j,\infty,T} \lesssim 1$ and $D^{k+1} \hat{v} = 0$
  which completes the proof.

\subsubsection*{Proof of Lemma \ref{lem:gp}}
  We show \eqref{eq:a}. The proof of estimate \eqref{eq:b} follows similar lines.
  
  We mimic the proof of \cite[Theorem 5.1]{MassingLarsonLoggEtAl2013a}, but need a few more technical steps due to $F$ being curved and $D^{k+1}v \neq 0$.
  First, we introduce simply connected domains $B_1 \subset T_1$, $C_2 \subset T_2$ and $F^\ast \subset F$ with $\operatorname{diam}(B_1),\operatorname{diam}(C_2),\operatorname{diam}(F^\ast) \gtrsim h$. For such domains standard finite element estimates give (with $\hat{v} = v \circ \Theta_h^{-1}$)
  $$
  \Vert  v \Vert_{T_1}^2 \simeq \Vert  \hat{v} \Vert_{\hat{T}_1}^2 \simeq  \Vert  \hat{v} \Vert_{\Theta_h^{-1}(B_1)}^2 \simeq   \Vert  v \Vert_{B_1}^2,
  $$
  and a similar result for $C_2$.
  For a simply connected domain $F^\ast \subset F$ we define $T_i^\ast(F^\ast) := \{ x \in T_i \mid x = x_F + \gamma n_F(x_F), x_F \in F, \gamma \in \rr \}$. For $x = x_F + \gamma n_F(x_F)$ in $T_i^\ast$ we define the mirror point $M(x) = x_F - \gamma n_F(x_F)$.
  Now, for $h$ sufficiently small we find domains $F^\ast \subset F$, a ball $B_1 \subset T_1^\ast(F^\ast)$ and $C_2 := M(B_1) = \{ x = M(y), y \in B_1\}$ which fulfil the aforementioned requirements.

  To each point $x_1= x_F + \gamma n_F(x_F)$ in $B_1$ we have a corresponding point
  $x_2 = M(x_1)$ in $C_2$. We develop $v_i:=v|_{T_i}$ around $x_F$ and obtain (for a $\xi_i = x_F \pm \gamma_{\xi} n_F \in T_i,~\gamma_{\xi} \in [0,\gamma]$)
  
  $$
   v_i(x_i) =  v_i(x_F) 
  + \sum_{l=1}^{k} \frac{\gamma^l}{l!}  \frac{\partial^l  v_i}{\partial n^l}(x_F) 
  + \frac{\gamma_{\xi}^{k+1}}{(k+1)!} \frac{\partial^{k+1} v_i}{\partial n^{k+1}}(\xi).
  $$
  Subtracting and integrating over $B_1$ then gives
\begin{equation} \label{gpest}
  \Vert  v_1 \Vert_{B_1}^2
\lesssim \Vert  v_2\circ M \Vert_{B_1}^2 + k \sum_{l=1}^k \frac{h^{2l+1}}{l!^2}  \Vert \spacejump{\partial_n^l v} \Vert_F^2 + 2 |B_1| \frac{h^{2k+2}}{(k+1)!^2} \Vert D^{k+1} v \Vert_{\infty,T_1 \cup T_2}^2.
\end{equation}
Exploiting the properties of $M$, and Lemma \ref{lemhighder}, we get
\begin{align*}
  \Vert  v \Vert_{T_1}^2 &
 \leq c \Vert  v \Vert_{T_2}^2 + J_F(v,v) + c h^2 \left( \Vert v \Vert_{T_1}^2 + \Vert v \Vert_{T_2}^2 \right).
\end{align*}
Now, for $h$ sufficiently small the last term can be absorbed by the others and the claim holds true.

\bibliographystyle{siam}
\bibliography{literature}
\end{document}